\theoremstyle{plain}
\newtheorem{theorem}{Theorem}[section]
\newtheorem{lemma}[theorem]{Lemma}
\newtheorem{corollary}[theorem]{Corollary}
\theoremstyle{definition}
\newtheorem{condition}[theorem]{Condition}
\theoremstyle{remark}
\newtheorem{remark}[theorem]{Remark}
\newcommand{\wcg}[1]{{\color{black} #1}}
\begin{document}

\twocolumn[
\icmltitle{
% Stochastic Localization for Binary Quadratic Distributions \\ with Arbitrary Discrete MCMC Samplers 
% % Stochastic Localization via Posterior Sampling with Arbitrary \\ Discrete MCMC Samplers for Binary Quadratic Distributions 
% \\ 
Sampling from Binary Quadratic Distributions via Stochastic Localization
}

% It is OKAY to include author information, even for blind
% submissions: the style file will automatically remove it for you
% unless you've provided the [accepted] option to the icml2025
% package.

% List of affiliations: The first argument should be a (short)
% identifier you will use later to specify author affiliations
% Academic affiliations should list Department, University, City, Region, Country
% Industry affiliations should list Company, City, Region, Country

% You can specify symbols, otherwise they are numbered in order.
% Ideally, you should not use this facility. Affiliations will be numbered
% in order of appearance and this is the preferred way.
\icmlsetsymbol{equal}{*}

\begin{icmlauthorlist}
\icmlauthor{Chenguang Wang}{equal,yyy,sribd}
\icmlauthor{Kaiyuan Cui}{equal,comp}
\icmlauthor{Weichen Zhao}{sch}
\icmlauthor{Tianshu Yu}{yyy}
% \icmlauthor{Firstname5 Lastname5}{yyy}
% \icmlauthor{Firstname6 Lastname6}{sch,yyy,comp}
% \icmlauthor{Firstname7 Lastname7}{comp}
% %\icmlauthor{}{sch}
% \icmlauthor{Firstname8 Lastname8}{sch}
% \icmlauthor{Firstname8 Lastname8}{yyy,comp}
%\icmlauthor{}{sch}
%\icmlauthor{}{sch}
\end{icmlauthorlist}

\icmlaffiliation{yyy}{School of Data Science, The Chinese University of Hong Kong, Shenzhen}
\icmlaffiliation{sribd}{Shenzhen Research Institute of Big Data}
\icmlaffiliation{comp}{The Academy of Mathematics and Systems Science, Chinese Academy of Sciences}
\icmlaffiliation{sch}{The School of Statistics and Data Science, LPMC \& KLMDASR, Nankai University}

\icmlcorrespondingauthor{Weichen Zhao}{zhaoweichen@nankai.edu.cn}
\icmlcorrespondingauthor{Tianshu Yu}{yutianshu@cuhk.edu.cn}

% You may provide any keywords that you
% find helpful for describing your paper; these are used to populate
% the "keywords" metadata in the PDF but will not be shown in the document
\icmlkeywords{Machine Learning, ICML}

\vskip 0.3in
]

% this must go after the closing bracket ] following \twocolumn[ ...

% This command actually creates the footnote in the first column
% listing the affiliations and the copyright notice.
% The command takes one argument, which is text to display at the start of the footnote.
% The \icmlEqualContribution command is standard text for equal contribution.
% Remove it (just {}) if you do not need this facility.

%\printAffiliationsAndNotice{}  % leave blank if no need to mention equal contribution
\printAffiliationsAndNotice{\icmlEqualContribution} % otherwise use the standard text.

\begin{abstract}
Sampling from binary quadratic distributions (BQDs) is a fundamental but challenging problem in discrete optimization and probabilistic inference. Previous work established theoretical guarantees for stochastic localization (SL) in continuous domains, where MCMC methods efficiently estimate the required posterior expectations during SL iterations. However, achieving similar convergence guarantees for discrete MCMC samplers in posterior estimation presents unique theoretical challenges. 
In this work, we present the first application of SL to general BQDs, proving that after a certain number of iterations, the external field of posterior distributions constructed by SL tends to infinity almost everywhere, hence satisfy Poincaré inequalities with probability near to 1, leading to polynomial-time mixing. This theoretical breakthrough enables efficient sampling from general BQDs, even those that may not originally possess fast mixing properties. Furthermore, our analysis, covering enormous discrete MCMC samplers based on Glauber dynamics and Metropolis-Hastings algorithms, demonstrates the broad applicability of our theoretical framework.
Experiments on instances with quadratic unconstrained binary objectives, including maximum independent set, maximum cut, and maximum clique problems, demonstrate consistent improvements in sampling efficiency across different discrete MCMC samplers.

% Stochastic localization (SL) provides a unified sampling framework for both continuous and discrete distributions through continuous Gaussian noise injection. For continuous target distributions, extensive theoretical work has established that such Gaussian convolution reduces sampling complexity. However, theoretical guarantees for discrete distributions remain absent.
% In this work, we establish the first theoretical guarantees for SL-accelerated Glauber dynamics sampling from binary quadratic distributions, a rich class of discrete models with notable instances including Ising models and quadratic unconstrained binary optimization. 
% We prove that the posterior distributions constructed by SL after a certain number of iterations satisfy Poincaré inequalities \wcg{with probability near to 1} and achieve polynomial-time mixing, even for general binary quadratic distributions that may not originally possess such properties.
% This implies sampling from these posterior distributions is more tractable than sampling directly from the target distribution. Furthermore, as SL serves as a general framework that can be built upon any variant of discrete MCMC methods, our theoretical results have broad implications. Our experiments on QUBO instances, including maximum independent set, maximum cut, and maximum clique problems, show consistent improvements in sampling efficiency across different variants of Glauber dynamics.
\end{abstract}

\section{Introduction}
In this work, we consider sampling from the Gibbs measure with the form of binary quadratic distributions (BQDs):
\begin{equation}\label{eq: target dist.}
    \nu(x) \propto e^{-\frac{\beta}{2}\langle x,W x\rangle+\langle x,b\rangle},\; x \in \{-1,1\}^N.
\end{equation}
where $\beta$ is the inverse temperature.
This distribution class naturally arises in statistical physics such as Ising models~\cite{nishimori2001statistical,bauerschmidt2024log}, spin systems~\cite{bauerschmidt2019very} and in combinatorial optimization as quadratic unconstrained binary optimization (QUBO) problems \citep{lucas2014ising,glover2022quantum,glover2022quantum2}, making it a fundamental object of study in both theoretical and applied research. Sampling from such distributions poses significant challenges due to their discrete nature and complex dependencies~\cite{sly2012computational}.
To sample from such distributions, various sampling approaches have been proposed, including Markov Chain Monte Carlo (MCMC) methods~\cite{metropolis1953equation,titsias2017hamming,grathwohl2021oops,sun2021path,sun2023discrete}, simulated annealing~\cite{kirkpatrick1983optimization,sun2022annealed,sanokowski2023variational}, {diffusion models~\cite{DBLP:conf/icml/SanokowskiHL24}} and variational inference~\cite{koehler2022sampling}. 

Recently, stochastic localization (SL) has emerged as a promising sampling framework. 
In the continuous domain, SLIPS \citep{DBLP:conf/icml/GreniouxNGD24} established theoretical guarantees by proving the ``duality of log-concavity" on sampling from the posterior and marginal distributions during SL iterations under certain assumptions, leading to significant improvements in sampling efficiency using Metropolis-Adjusted Langevin Algorithm (MALA) \citep{roberts1996exponential}. 
Intuitively, rather than directly sampling from the continuous target distribution, SL achieves sampling by iteratively adding Gaussian noise to posterior estimates of the target distribution. This results in posterior distributions that are Gaussian convolutions of the target distribution which effectively reduces sampling complexity by smoothing out the target distribution’s irregular features \citep{huang2023reverse,chen2024diffusive}. 
This idea of decomposing a difficult direct sampling task into a sequence of sampling from simpler distributions is also prevalent in diffusion models for generative tasks and is considered one of the key factors behind their remarkable performance \citep{ho2020denoising,song2020denoising,song2020score,rombach2022high}.

However, this elegant theoretical machinery relies heavily on properties specific to continuous spaces and cannot be directly extended to discrete settings, where the fundamental nature of the state space requires different analytical approaches. While several pioneering works have explored SL-based sampling for specific discrete statistical physics models, including Sherrington-Kirkpatrick (SK) models \citep{el2022sampling} and random field Ising models \citep{Alaoui2023FastRO}, these efforts have often employed model-specific techniques for posterior estimation or focused on distributions with particular structural properties. For instance, \citet{el2022sampling} addresses SK models at specific temperature regimes using TAP free energy~\cite{nishimori2001statistical} optimization to approximate posterior expectations in SL iterations. \wcg{Consequently, a general framework for leveraging SL with standard, off-the-shelf discrete MCMC samplers and providing theoretical guarantees broadly applicable to the rich class of BQDs has been less explored.} A natural question thus arises:

\textit{Can SL reduce sampling difficulty for BQDs, as it does in continuous settings, \wcg{by constructing easily-samplable posterior distributions using general-purpose discrete MCMC?}}

In this work, we affirmatively answer the question for general BQDs by establishing strong theoretical guarantees. Our core theoretical contribution is proving that after a certain number of SL iterations, the posterior distributions constructed by SL satisfy Poincaré inequalities with probability close to 1. This result is crucial as it implies polynomial-time mixing for any underlying discrete MCMC sampler applied to these posteriors. We explicitly derive a bound on the required number of SL iterations, providing concrete guidance for practical implementation. Unlike prior results tied to specific cases, our framework applies to general BQDs (for arbitrary $W$ and $b$), demonstrating the broad applicability of SL in this discrete setting. Importantly, our guarantees hold for discrete MCMC samplers based on both Glauber dynamics and Metropolis-Hastings algorithms, encompassing most commonly used discrete sampling methods.

To demonstrate the practical value of our theoretical insights, we conduct extensive experiments on three types of QUBO problems: maximum independent set, maximum cut, and maximum clique problems, each evaluated across multiple datasets. We assess three representative discrete MCMC samplers based on Glauber dynamics and Metropolis-Hastings algorithms: Gibbs with Gradients (GWG) \citep{grathwohl2021oops}, Path Auxiliary Sampler (PAS) \citep{sun2021path}, and Discrete Metropolis-Adjusted Langevin Algorithm (DMALA) \citep{zhang2022langevin}. Our experimental results demonstrate that SL consistently improves the sampling performance of all three samplers across every dataset, providing strong empirical support for our theoretical guarantees.

% \textbf{Organization} The rest of the paper is organized as follows: Section 2 introduces basic notation, discrete MCMC samplers, and preliminary concepts of stochastic localization. Section 3 presents the general algorithmic framework along with intuitive interpretations. Section 4 establishes our theoretical results, \wcg{with Section 4.1 ... and Section 4.2 ... and ...}. Section 6 presents comprehensive empirical results on various QUBO instances. \wcg{TBD}

\section{Preliminaries}
In this section, we introduce key concepts and notations that will be used throughout this paper.
\subsection{Notations}
Throughout this paper, we consider a system of size $N$ where $[N]\coloneqq{1,2,\ldots,N}$ denotes the index set and $\Omega\coloneqq\{-1,1\}^{N}$ represents the hypercube state space. For any configuration $x$, we use $x_{\sim i}$ to denote $x$ with site $i$ excluded, $x^i$ to denote the configuration after transition at site $i$ and $x=y \ \rm off \ j$ to denote $x_i=y_i,i\neq j$. We define the trivial metric as $d(x,y):=\textbf{1}_{x\neq y}$, where $\textbf{1}_{x\neq y}=1$ if $x\neq y$, and $\textbf{1}_{x\neq y}=0$ if $x=y$. We use $||\cdot||_{TV}$ to denote the total variation distance. The conditional distribution of $x_i$ is denoted by $\nu_{i}(\cdot|x)$, while $\mathbb{E}_\nu$ represents expectation under measure $\nu$. Finally, $\langle \cdot,\cdot\rangle$ denotes the standard inner product.

\subsection{Discrete MCMC Samplers}\label{sec:2.2}
In this part, we introduce three recent and effective discrete MCMC samplers, which will facilitate our theoretical analysis in Section~\ref{sec: theoretical res.}: Gibbs With Gradients (GWG)~\citep{grathwohl2021oops}, Path Auxiliary Sampler (PAS)~\citep{sun2021path}, and Discrete Metropolis-adjusted Langevin Algorithm (DMALA)~\citep{zhang2022langevin}. These samplers share a common foundation in locally balanced proposals, where they incorporate local information about the target distribution to construct more efficient transitions. Essentially, they can all be viewed as generalized Glauber dynamics that incorporate gradient-based information from target distribution and different dimension update schemes, followed by an optional Metropolis-Hastings (MH) adjustment.

For GWG, the locally informed proposal is defined as:
\begin{equation}\label{eq:gwg}
    p(x'|x)\propto e^{\frac{d(x)}{2}}\mathbf{1}_{x'\in H(x)},    
\end{equation}
where $d(x)=-(2x-1)\nabla \log\nu(x)$, $H(x)$ denotes the Hamming ball of size 1 around $x$, and the site to flip is determined by sampling $i\sim \text{Categorical}(\text{Softmax}(\frac{d(x)}{2}))$.

For PAS, the proposal is formulated as:
\begin{equation}\label{eq:pas}
    p_{y}(x'|x)\propto g(e^{\langle \nabla\log\nu(y), x'-x\rangle}),
\end{equation}
where $g(\cdot)$ is the local balancing function (specifically, $g(t)=\sqrt{t}$ in PAS) and $y$ represents the current state as the starting point of the auxiliary path.

For DMALA, the proposal admits the conditional independence relationship $p(x'|x)=\prod_i^N p(x_i'|x)$ and takes the form:
\begin{equation}\label{eq:dmala}
    p(x'|x)=\prod_{i=1}^N\left(\text{Softmax}\left(\frac{1}{2}(\nabla\log\nu(x))_i(x_i'-x_i)
    % -\frac{(x_i'-x_i)^2}{2\eta}
    \right)\right),
\end{equation}
% where $\eta$ denotes the step size.
where the quadratic term present in the original form becomes negligible in the binary case.

Each of these samplers may include an optional MH adjustment step with the acceptance ratio:
\begin{equation}
    \min\left(1, \frac{\nu(x')p(x|x')}{\nu(x)p(x'|x)}\right).
\end{equation}

\subsection{SL for Sampling}\label{sec: pre for sl}
\citet{el2022information} established a connection between stochastic localization and information theory through an \emph{observation process} $(Y_t)_{t\ge 0}$:
\begin{equation}\label{eq.ob}
Y_t=tX+ B_t,
\end{equation}
where $X\sim \nu$, $(B_t)_{t\ge 0}$ is a standard Brownian motion independent of $X$. The conditional distribution $q_t$ of $X$ given $Y_t$ follows a stochastic localization process, which by Theorem 7.12 of \cite{liptser2013statistics}, satisfies:
\begin{equation}\label{eq.sl2}
\mathrm{d}Y_t=u_t(Y_t)\mathrm{d}t+\mathrm{d}B_t,\quad Y_0=0,
\end{equation}
where $u_t(y)=\int xq_t(x|y)\mathrm{d}x$. \citet{DBLP:conf/icml/GreniouxNGD24} later generalized this to:
\begin{equation}\label{eq: generalized sl}
Y_t^\alpha=\alpha(t)X+\sigma B_t,\quad t\in[0,T_\mathrm{gen}),
\end{equation}
with $\alpha(t)=t^{1/2}g(t)$ satisfying certain regularity conditions\footnote{Specifically, $g\in\mathrm{C}^0([0,T_{\mathrm{gen}}),\mathbb{R}+)\cap\mathrm{C}^1((0,T{\mathrm{gen}}),\mathbb{R}+)$ is strictly increasing with $g(t)\sim Ct^{\beta/2}$ as $t\to0$ for some $\beta\geq1,C>0$, and $\lim_{t\to T_\mathrm{gen}}g(t)=\infty$.}. The corresponding SDE becomes:
\begin{equation}
\mathrm{d}Y_t^\alpha=\dot{\alpha}(t)u_t^\alpha(Y_t^\alpha)\mathrm{d}t+\sigma\mathrm{d}B_t,\quad Y_0^\alpha=0,
\end{equation}
where $u_t^\alpha(y)=\int xq_t^\alpha(x|y)\mathrm{d}x$. We refer more fundamental definitions of SL to Appendix \ref{app: sl def}.

\section{Binary Quadratic Sampling via Stochastic Localization}\label{sec: BQD sampling}
\begin{algorithm}[tb]
   \caption{SL with Discrete MCMC Samplers}
   \label{alg: main alg.}
\begin{algorithmic}
   \STATE {\bfseries Input:} Discrete MCMC Sampler ($\text{DMS}$), time steps for SDE iterations $\{t_i\}_{i=0}^T$, sample size for posterior estimation $n$, SL parameters $\alpha(\cdot)$, $\sigma$
   \STATE Initialize $\tilde{Y}_0\sim \mathcal{N}(\mathbf{0},t_0\sigma^2 I)$
    \FOR{$i=0$ {\bfseries to} $T-1$}
    \STATE Set $\delta_i=t_{i+1}-t_{i}, w_i=\alpha(t_{i+1})-\alpha({t_i})$
    \STATE Simulate $\{x_j^i\}_{j=1}^{n}\sim\mathbb{P}(X|\tilde{Y}_i)$ with $\text{DMS}$
    \STATE Estimate the posterior expectation $\tilde{U}_i^{\alpha}= \frac{1}{n}\sum_{j=1}^{n}x_j^i$
    \STATE Simulate $\tilde{Y}_{i+1} \sim \mathcal{N}(\tilde{Y}_{i}+w_i\tilde{U}_i^{\alpha}, \sigma^2\delta_i\mathbf{I})$
   \ENDFOR
   \STATE {\bfseries Output:} $\frac{\tilde{Y}_{T}}{\alpha(t_T)}$
\end{algorithmic}
\end{algorithm}

In this section, we present the SL-based sampling framework for binary quadratic distributions. 
Based on SL framework introduced in Section \ref{sec: pre for sl}, estimating the posterior expectation $u_t^\alpha(y)=\int xq_t^\alpha(x|y)\mathrm{d}x$ is crucial for sampling. The most straightforward approach is to approximate this expectation through Monte Carlo approximation using discrete MCMC samplers:
$$
\tilde{U}_t^{\alpha}=\frac{1}{n}\sum_{j=1}^n x_j^t\approx u_t^\alpha(y)
$$
where $\{x_j^t\}_{j=1}^n\sim q_t^\alpha(x|y)$. The algorithm is detailed in Algorithm \ref{alg: main alg.}. Understanding the properties of this posterior distribution becomes central to our analysis.

Given the observation process in Equation \ref{eq: generalized sl}, Bayes' theorem yields the posterior distribution of $X$ given $Y_t$:
\begin{equation}
q_t^\alpha(x|y)\propto p_t^{\alpha}(y|x)\nu(x)
\end{equation}
where $p_t^{\alpha}(y|x)$ is the Gaussian likelihood induced by the observation process.
For the binary quadratic distribution $\nu(x)$ defined in Equation \eqref{eq: target dist.}, the posterior distribution takes the explicit form:
\begin{equation}\label{eq: poster. dist.}
q_t^\alpha(x|y)\propto e^{-\frac{\beta}{2}\langle x, Wx\rangle + \langle x, b+\frac{\alpha(t)Y_t}{\sigma^2t}\rangle}.
\end{equation}
Note that the quadratic term associated with $x$ from the Gaussian likelihood $p_t^{\alpha}(y|x)$ vanishes since $\langle x, x\rangle=\mathbf{1}$ for $x\in\{-1,1\}^N$.
In contrast to continuous settings, where $x^2$ terms persist and log-concavity can be analyzed through the Hessian of the log-likelihood, the discrete case presents unique challenges. Here, the posterior distribution differs from $\nu(x)$ only in its linear term, and we cannot assess sampling difficulty through derivatives. 
This fundamental difference necessitates theoretical guarantees to understand how SL affects the mixing properties of discrete MCMC samplers.

\paragraph{Physical Intuition} Before delving into rigorous theoretical analysis, we provide an intuitive understanding from a physical perspective. Our starting point is the following convergence result:
\begin{theorem}\label{thm:3.1}
Consider the observation process
 \begin{equation*}
 	Y_t=\alpha(t)X+ \sigma B_t,
 \end{equation*}
% where $\alpha(t)=O(t^{\frac{\vartheta}{2}}),\vartheta>1$ and $\sigma$ is a constant.
% Then we have
% \begin{align*}
% \mathbb{P}(\lim_{t\rightarrow+\infty}\left|\frac{Y_{t}}{\alpha(t)}\right|=1)=1.
% \end{align*}
% Moreover, 
in the case  $\frac{\alpha(t)}{\sigma\sqrt{t}}\rightarrow+\infty$ as $t\rightarrow+\infty$,  for arbitrary $\zeta>0$ and $\varepsilon>0$, there is a  $T(\zeta,\varepsilon)$ large enough such that for $t\geq T$, the observation process satisfies
\begin{align*}
\mathbb{P}(|Y_{t}|\geq \zeta)\geq 1-\varepsilon.
\end{align*}
\end{theorem}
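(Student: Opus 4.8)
The plan is to exploit the factorization $Y_t = \alpha(t)\bigl(X + \tfrac{\sigma}{\alpha(t)}B_t\bigr)$, which separates the deterministic magnitude $\alpha(t)$ from a normalized noise term whose contribution the hypothesis $\alpha(t)/(\sigma\sqrt t)\to\infty$ forces to vanish. Since $X\in\{-1,1\}^N$ satisfies $\|X\|=\sqrt N$ deterministically, once the normalized noise is small the vector $X+\tfrac{\sigma}{\alpha(t)}B_t$ is bounded away from the origin, so $\|Y_t\|$ is comparable to $\alpha(t)\sqrt N$, which diverges. Thus the whole statement reduces to controlling the rescaled Brownian term, and the role of the hypothesis becomes transparent: it is precisely what guarantees $Y_t$ behaves like $\alpha(t)X$ rather than being swamped by noise.

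First I would estimate the second moment of the rescaled noise,
$$\mathbb{E}\Bigl[\bigl\|\tfrac{\sigma}{\alpha(t)}B_t\bigr\|^2\Bigr] = \frac{\sigma^2 t}{\alpha(t)^2}\,N = N\Bigl(\frac{\sigma\sqrt t}{\alpha(t)}\Bigr)^2,$$
and observe that the hypothesis is exactly the statement that $\sigma\sqrt t/\alpha(t)\to 0$, so this quantity tends to $0$. Markov's inequality then gives, for any fixed $\delta>0$,
$$\mathbb{P}\Bigl(\bigl\|\tfrac{\sigma}{\alpha(t)}B_t\bigr\|\ge\delta\Bigr)\le \frac{N\sigma^2 t}{\delta^2\alpha(t)^2}\xrightarrow[t\to\infty]{}0.$$

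Next I would fix $\delta=\sqrt N/2$ and work on the event $E_t=\{\|\tfrac{\sigma}{\alpha(t)}B_t\|<\delta\}$, on which the reverse triangle inequality yields $\|X+\tfrac{\sigma}{\alpha(t)}B_t\|\ge \sqrt N-\delta=\sqrt N/2$, hence $\|Y_t\|\ge \alpha(t)\sqrt N/2$. The thresholds are then chosen in order: because $\alpha(t)=t^{1/2}g(t)\to\infty$, pick $T_1$ so that $\alpha(t)\sqrt N/2\ge\zeta$ for $t\ge T_1$; because $\mathbb{P}(E_t)\to1$, pick $T_2$ so that $\mathbb{P}(E_t)\ge1-\varepsilon$ for $t\ge T_2$; and set $T=\max(T_1,T_2)$. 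For $t\ge T$ the event $E_t$ already forces $\|Y_t\|\ge\zeta$, so $\mathbb{P}(|Y_t|\ge\zeta)\ge\mathbb{P}(E_t)\ge1-\varepsilon$.

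There is no serious obstacle here; the only point requiring care is the logical ordering of the thresholds, since $\delta$ must be fixed from the deterministic norm $\|X\|=\sqrt N$ \emph{before} invoking either limit, and the two limits $\alpha(t)\to\infty$ and $\sigma\sqrt t/\alpha(t)\to0$ must be combined on the same good event. The argument is written for the Euclidean norm, but it applies verbatim coordinatewise (with $\|X\|$ replaced by $|X_i|=1$), so the conclusion holds whether $|Y_t|$ denotes the vector norm or a single coordinate.
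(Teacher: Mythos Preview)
Your proof is correct and takes a genuinely different route from the paper's. The paper argues coordinatewise via the explicit Gaussian CDF: writing $B_t/\sqrt t$ as a standard Gaussian, it computes
\[
\mathbb{P}(|Y_t|<\zeta)=\Phi\!\left(\tfrac{\zeta}{\sigma\sqrt t}\pm\tfrac{\alpha(t)}{\sigma\sqrt t}\right)-\Phi\!\left(-\tfrac{\zeta}{\sigma\sqrt t}\pm\tfrac{\alpha(t)}{\sigma\sqrt t}\right)
\]
and bounds this by $\Phi\bigl(\zeta/\sigma-\alpha(t)/(\sigma\sqrt t)\bigr)\to 0$, using the hypothesis directly on the argument of $\Phi$. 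Your argument instead factors out $\alpha(t)$, controls the rescaled noise by a second-moment Markov bound, and uses the deterministic norm $\|X\|=\sqrt N$ together with the reverse triangle inequality. The paper's approach is more explicit (one could extract a quantitative rate from the Gaussian tail), while yours is more elementary and more robust: it never touches the Gaussian CDF, works verbatim for any finite-variance noise, and handles the $N$-dimensional statement in one stroke rather than coordinatewise. One small remark: you invoke $\alpha(t)=t^{1/2}g(t)\to\infty$ from the preliminaries, but the theorem as stated only assumes $\alpha(t)/(\sigma\sqrt t)\to\infty$; it is cleaner to note that $\alpha(t)=\bigl(\alpha(t)/(\sigma\sqrt t)\bigr)\cdot\sigma\sqrt t\to\infty$ follows directly from the hypothesis, so no extra structural assumption on $\alpha$ is needed.
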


\wcg{Examining the linear term coefficient in the posterior distribution \eqref{eq: poster. dist.}, which serves as an effective external field $h_t$:
\begin{equation}\label{eq: linear term}
h_t = b+\frac{\alpha(t)Y_t}{\sigma^2t}.
\end{equation}
The behavior of this term is intrinsically linked to the stochastic localization process. Specifically, the dynamics of the localization variable $Y_t$ are governed by the SL formulation arising from an observation process, as detailed in Section \ref{sec: pre for sl} (cf. Equation \eqref{eq: generalized sl}). Theorem \ref{thm:3.1} provides a theoretical guarantee on the asymptotic behavior of this process under our chosen $\alpha(t)$ schedule. It proves that as $t \to T_{\mathrm{gen}}$, the term $\frac{Y_t}{\alpha(t)}\rightarrow X\in\{-1,1\}^N$ and the scaling factor $\frac{\alpha(t)^2}{\sigma^2t}\rightarrow +\infty$. As a result, the magnitude of the effective external field $|h_t|$ grows infinitely. 
In physical terms, this corresponds to applying an increasingly strong external field to an interacting system. 
For some statistical physics models, the presence of a strong external field generally {enhances} mixing properties compared to the zero-field case \citep{martinelli19942,martinelli1994approach,kerimov2012one}. Intuitively, when this field becomes sufficiently large, the quadratic interaction term becomes negligible in comparison, and the posterior distribution approximates to:
$$
q_t^\alpha(x|y)\stackrel{\approx}{\propto}e^{\langle x, b+\frac{\alpha(t)Y_t}{\sigma^2t}\rangle}.
$$
This simplifies to a product of independent Bernoulli distributions, which is significantly easier to sample.
This property of a strong, growing external field is crucial for establishing favorable mixing properties of the posterior distribution at late times. A formal condition on the external field strength required for our subsequent theoretical guarantees is introduced later in Condition \ref{assump1}.}

% \paragraph{Error Analysis on Discretization}

% \paragraph{Error Analysis on Estimating Posterior Expectation}

\section{Theoretical Analysis}\label{sec: theoretical res.}
This section establishes theoretical guarantees for sampling from posterior distributions in the SL framework. 
%{From the established Poincaré inequalities, we obtain exponential ergodicity under the $d$-Wasserstein distance $\mathcal{W}_{1,d}$ defined in \eqref{eq:wasserstein}. Specifically, the corresponding Markov process satisfies:
%\begin{align*}\mathcal{W}_{1,d}\left(P_t(x,\cdot),\nu_{\beta,h}\right)&\leq e^{-t\gamma_{\text{gap}}}\int d(x,y)\nu(dy),\end{align*}
%where $P_t(x,\cdot)$ denotes the Markov semigroup associated with the algorithm. 
The spectral gap $\gamma_{\text{gap}}$ determines the convergence rate of algorithm and takes different forms for each DMCMC sampler:
\begin{itemize}
    \item $1-\frac{|h|}{e^{\frac{3|h|}{4}}+e^{-\frac{3|h|}{4}}}$ for Glauber dynamics;
    \item $1-2|h|e^{-|h|}$ for classical Metropolis chains;
    \item $1-\frac{|h|}{(e^{\frac{|h|}{4}}+e^{-\frac{|h|}{4}})^{2}}$ for single-site gradient-informed MH algorithms;
    \item $ 1-\frac{4|h|N}{(e^{\frac{|h|}{4}}+e^{-\frac{|h|}{4}})^{2}}$ for DULA \citep{zhang2022langevin}.
\end{itemize}
Notably, for all cases, $\gamma_{\text{gap}}$ increases with $|h|$ and approaches 1 as $|h|\rightarrow +\infty$.

We organize this section as follows. Section \ref{sec: 4.1} introduces the essential definitions and assumptions that underpin our analysis. Section \ref{sec:4.2} examines two warm-up cases, establishing Poincaré inequalities for classical Glauber dynamics and Metropolis-Hastings algorithms. Building on these foundations, Section \ref{sec:4.3} extends our theoretical guarantees to more sophisticated sampling algorithms.
% while Section \ref{sec:4.4} demonstrates their applications to three discrete MCMC samplers: Gibbs with Gradients (GWG), Path Auxiliary Sampler (PAS), and Discrete Metropolis-Adjusted Langevin Algorithm (DMALA).
All proofs are deferred to Appendix \ref{app: proofs}.

\subsection{Setup and Key Assumptions}\label{sec: 4.1}
For simplicity, we represent the posterior distribution in equation \eqref{eq: poster. dist.} using the following Gibbs measure:
\begin{align}\label{eq:nu}
\nu_{\beta,h}=\frac{1}{\tilde{Z}_{\beta,h}}e^{-\frac{\beta}{2}\langle x,W x\rangle+\langle x,h\rangle},  x\in \{-1,1\}^N,
\end{align}
where $\beta$ denotes the inverse temperature and $\tilde{Z}_{\beta,h}$ is the partition function.
Based on the physical intuition provided in Section \ref{sec: BQD sampling}, the external field $h$ in equation \eqref{eq:nu} grows to infinity, which motivates our fundamental assumption for all subsequent theoretical analysis:

\wcg{
\begin{condition}
    \label{assump1} % Changed label to cond:h for clarity
The strong external field $h$ satisfies
\begin{equation}\label{eq:h}
  |h|\geq 2\beta\sup_{i\in [N]}\sum_{k\neq i}|W_{ik}|,
\end{equation}
\end{condition}

\begin{remark}
Theorem \ref{thm:3.1} guarantees that this condition holds with high probability after sufficient SL iterations. Moreover, for low-temperature models (large inverse temperature $\beta$), this condition is naturally satisfied due to the dominance of the external field term.
\end{remark}
}

% \begin{assumption}\label{assump1}
% The strong external field $h$ satisfies
% \begin{equation}\label{eq:h}
%     |h|\geq 2\beta\sup_{i\in [N]}\sum_{k\neq i}|W_{ik}|,
% \end{equation}
% \end{assumption}
% \begin{remark}
%     % This assumption is readily satisfied in practice. 
%     Theorem \ref{thm:3.1} guarantees that this assumption holds with high probability after sufficient SL iterations. Moreover, for low-temperature models (large inverse temperature $\beta$), this assumption is naturally satisfied due to the dominance of the external field term.
% \end{remark}

Following Condition \ref{assump1} and Theorem \ref{thm:3.1}, setting $\zeta=2\beta\sup_{i\in [N]}\sum_{k\neq i}|W_{ik}|$ and by choosing a sufficiently small $\varepsilon>0$, we can ensure that for $t\geq T(\alpha,\varepsilon)$, the external field $|Y_t|$ in equation \eqref{eq: poster. dist.} is larger than $\zeta$, and therefore exceeds $2\beta\sup_{i\in [N]}\sum_{k\neq i}|W_{ik}|$ with probability at least $1-\varepsilon$. 
In subsequent sections, we will demonstrate that when the external field is sufficiently large, certain Discrete MCMC samplers for Gibbs distribution \eqref{eq:nu} satisfy Poincaré inequalities. This theoretical guarantee, combined with the high-probability bound on the external field, establishes that the Poincaré inequalities in Theorem \ref{thm:4.2} and Theorem \ref{thm:classmh} hold with probability close to 1, thereby ensuring polynomial-time mixing\footnote{As shown in Theorem 20.6 of \cite{levin2017markov}, stochastic dynamics satisfying Poincaré inequality exhibit polynomial mixing time.} for sampling from the posterior distribution \eqref{eq: poster. dist.}.

\subsection{Warm-up Cases: Poincar\'e Inequality for Glauber Dynamics and Metropolis Chains}\label{sec:4.2}

%Moreover, \eqref{eq:h} gives a certain condition. We can determine the threshold at which the Poincar\'e inequality becomes valid. 

% Let $\nu_{i}(\sigma_{i}|\sigma_{\sim i})$ be the conditional distribution of $\sigma_{i}$ knowing $\sigma_{\sim i}$. The $d$-Dobrushin interdependence matrix $C:=(c_{ij})_{i,j\in T}$ is defined by
% \begin{align*}
% 	c_{ij}:=\sup_{x=y \ \text{off}\ j}\frac{\mathcal{W}_{1,d}(\nu_{i}(\cdot\mid x),\nu_{i}(\cdot\mid y))}{d(x_{j},y_{j})}, i,j\in T.
% \end{align*}
% \zwc{Obviously}, $c_{ii}=0$ and $d(x_{j},y_{j})=1$ for $x=y \ \text{off}\ j$. 
% The conditional measure $\nu(\sigma_{i}\mid \sigma_{\sim i})$ is
% \begin{align*}
% 	&\nu_{\beta,h}(\sigma_{i}\mid \sigma_{\sim i})=\frac{e^{-\frac{\beta}{2}\langle\sigma,W\sigma\rangle+\langle\sigma,h\rangle}}{\sum_{\sigma_{i}=\pm 1}e^{-\frac{\beta}{2}\langle\sigma,W\sigma\rangle+\langle\sigma,h\rangle}}\\
% 	&=\frac{e^{-\frac{\beta}{2}\sum_{k\neq i}W_{ik}\sigma_{i}\sigma_{k}-\frac{\beta}{2}W_{ii}-\frac{\beta}{2}\sum_{k,j\neq i}W_{jk}\sigma_{j}\sigma_{k}+h_{i}\sigma_{i}+\sum_{k\neq i}h_{k}\sigma_{k}}}{\sum_{\sigma_{i}=\pm 1}e^{-\frac{\beta}{2}\sum_{k\neq i}W_{ik}\sigma_{i}\sigma_{k}-\frac{\beta}{2}W_{ii}-\frac{\beta}{2}\sum_{k,j\neq i}W_{jk}\sigma_{j}\sigma_{k}+h_{i}\sigma_{i}+\sum_{k\neq i}h_{k}\sigma_{k}}}\\
% 	&=\frac{e^{-\frac{\beta}{2}\sum_{k\neq i}W_{ik}\sigma_{i}\sigma_{k}+h_{i}\sigma_{i}}}{e^{-\frac{\beta}{2}\sum_{k\neq i}W_{ik}\sigma_{k}+h_{i}}+e^{\frac{\beta}{2}\sum_{k\neq i}W_{ik}\sigma_{k}-h_{i}}}.	
% \end{align*}	
In this subsection, we begin with presenting our theoretical results through two fundamental examples, Glauber dynamics and Metropolis chains. Here, we derive explicit coefficients for the Poincaré inequality that depend on the external field $h$. 
% This analysis provides insight into the algorithm proposed in Section \ref{sec: BQD sampling}.

\paragraph{Glauber Dynamics} Glauber dynamics, or Gibbs sampling, provides a fundamental approach for sampling from BQDs. The algorithm iteratively updates each spin $i$ by sampling from its conditional distribution:
$$ \nu_{\beta,h}(x_{i}\mid x_{\sim i}):=\frac{e^{-\frac{\beta}{2}\langle x,W x\rangle+\langle x,h\rangle}}{\sum_{x_{i}=\pm 1}e^{-\frac{\beta}{2}\langle x,W x\rangle+\langle x,h\rangle}}. $$

% For distinct sites $i,j\in [N]$ with configurations $x_{\sim i}=\hat{x}_{\sim i}$ off $j$, we can compute the Wasserstein distance between their respective conditional distributions:
% \begin{align}\label{eq:14}
% \begin{split}
% &\mathcal{W}_{1,d}(\nu_{\beta,h}(x_{i}\mid x_{\sim i}),\nu_{\beta,h}(x_{i}\mid \hat{x}{\sim i}))\\
% &=\sup{A\in\mathcal{B}}\left|\nu_{\beta,h}(A\mid x_{\sim i})-\nu_{\beta,h}(A\mid \hat{x}_{\sim i})\right|,
% \end{split}
% \end{align}
% where $\mathcal{B}=\{\emptyset,\{+1,-1\},{+1},{-1}\}$ is the Borel field. The following proposition characterizes the Dobrushin interdependence coefficients, which yields an estimate of the spectral gap for Glauber dynamics under the Gibbs measure with external field.

% \begin{proposition}\label{pro:4.1}
% Under the metric $d(x,y)=\textbf{1}_{x\neq y}$, the $d$-Dobrushin interdependence matrix $C=(c_{ij})_{i,j\in [N]}$ satisfies
% 	\begin{align*}
% 		c_{ij}&:=\sup_{ x=\hat{ x} \ \text{off}\ j}\frac{\mathcal{W}_{1,d}(\nu_{i}(\cdot\mid  x_{\sim i}),\nu_{i}(\cdot\mid \hat{ x}_{\sim i}))}{d( x_{j},\hat{ x}_{j})}\\&\leq \frac{2\beta|W_{ij}|}{e^{\frac{3|h|}{4}}+e^{-\frac{3|h|}{4}}}, \;i,j\in [N].
% 	\end{align*}	

% \end{proposition} 

For Glauber dynamics, we establish the following theorem with spectral gap $\gamma_{\text{gap}}=1-\frac{|h|}{e^{\frac{3|h|}{4}}+e^{-\frac{3|h|}{4}}}$:
\begin{theorem}\label{thm:4.2}
For the Gibbs measure \eqref{eq:nu} satisfying the large field Condition \ref{assump1}, the following Poincaré inequality holds:
\begin{align}
	\text{Var}_{\nu_{\beta,h}}(f)\leq \frac{1}{1-\frac{|h|}{e^{\frac{3|h|}{4}}+e^{-\frac{3|h|}{4}}}}\mathcal{E}_{\text{GD}}(f,f),
\end{align}	
where $\mathcal{E}_{\text{GD}}(f,f)$ is the Dirichlet form \eqref{eq:gd_df} associated with Glauber dynamics on $L^{2}(\nu_{\beta,h})$.
\end{theorem}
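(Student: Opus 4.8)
The plan is to establish the Poincaré inequality through the classical Dobrushin influence/comparison route, exploiting that under Condition \ref{assump1} the external field dominates the pairwise interactions at every site. First I would write the single-site conditional explicitly: for the Gibbs measure \eqref{eq:nu}, the law of $x_i$ given $x_{\sim i}$ depends only on the local effective field $\tilde{h}_i(x_{\sim i}) := h_i - \beta\sum_{k\neq i}W_{ik}x_k$, with conditional mean $\tanh(\tilde{h}_i)$ and conditional variance $\mathrm{sech}^2(\tilde{h}_i)$. Condition \ref{assump1} gives $\beta\sum_{k\neq i}|W_{ik}|\le |h|/2$, so the interaction can shift the bare field by at most $|h|/2$ and hence $|\tilde{h}_i|$ stays in a band bounded away from $0$, of order $|h|$, for every configuration $x_{\sim i}$. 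This is the structural fact driving the whole argument: each conditional law is strongly polarized and is only weakly perturbed when a neighbouring spin is flipped.

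Next I would quantify that weak perturbation. Flipping a single spin $x_j$ shifts $\tilde{h}_i$ by exactly $2\beta W_{ij}$, and since this shift occurs within the high-field band, the induced change in the conditional law is exponentially suppressed. Bounding $|\tanh(\tilde{h}_i^x)-\tanh(\tilde{h}_i^y)|$ (equivalently the total-variation influence) via the mean value theorem over the admissible field range yields a Dobrushin influence coefficient $C_{ij}$ proportional to $\beta|W_{ij}|$ times a $\mathrm{sech}$-type factor evaluated at a field of magnitude $\Theta(|h|)$. Summing over $j$ and using Condition \ref{assump1} to absorb $\beta\sum_k|W_{ik}|$ into $|h|$ produces a scalar bound of the form $\max_i\sum_{j\ne i}C_{ij}\le \frac{|h|}{e^{3|h|/4}+e^{-3|h|/4}}=:\rho$. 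Carefully locating the worst-case local-field value inside the band $[\tfrac12|h|,\tfrac32|h|]$ permitted by Condition \ref{assump1}, and re-expressing the $\tanh$/$\mathrm{sech}$ increment there purely in terms of the scalar $|h|$, is exactly what fixes the denominator exponent $3/4$.

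With the influence bound in hand I would invoke the standard comparison between the Dobrushin influence matrix and the spectral gap: whenever $\rho<1$ the single-site Glauber dynamics contracts, and the associated Poincaré constant is $1/(1-\rho)$. Two equivalent ways to close the argument are (i) an approximate tensorization of variance, $\mathrm{Var}_{\nu_{\beta,h}}(f)\le \frac{1}{1-\rho}\sum_i \mathbb{E}_{\nu_{\beta,h}}[\mathrm{Var}_{\nu_i}(f\mid x_{\sim i})]$, recognizing the right-hand sum as $\mathcal{E}_{\mathrm{GD}}(f,f)$ from \eqref{eq:gd_df}; or (ii) a path-coupling computation showing the expected Hamming distance between two maximally coupled chains contracts at rate $(1-\rho)$ in the normalization built into $\mathcal{E}_{\mathrm{GD}}$, which by the contraction-to-gap correspondence gives the same constant. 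Either route yields $\mathrm{Var}_{\nu_{\beta,h}}(f)\le \frac{1}{1-\rho}\,\mathcal{E}_{\mathrm{GD}}(f,f)$ with $\rho=\frac{|h|}{e^{3|h|/4}+e^{-3|h|/4}}$, as claimed.

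The main obstacle I anticipate lies in the second step: obtaining the influence bound with the precise advertised constant rather than a cruder $\mathrm{sech}^2(|h|/2)$-type estimate. This requires identifying, over all $x_{\sim i}$ and all neighbours $j$, the worst-case pair of local fields produced by a single flip and bounding the increment uniformly over that pair. The tensorization/comparison step is routine once $\rho<1$, but some care is needed to match the normalization of $\mathcal{E}_{\mathrm{GD}}$ in \eqref{eq:gd_df} so that no spurious factor of $N$ appears in the final Poincaré constant.
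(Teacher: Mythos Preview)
Your proposal is correct and follows essentially the same route as the paper: compute the single-site conditional in terms of the local effective field, bound the Dobrushin influence coefficients $c_{ij}$ via the mean value theorem using that Condition~\ref{assump1} keeps the local field bounded away from zero, sum to obtain $\|C\|_\infty\le\rho=\frac{|h|}{e^{3|h|/4}+e^{-3|h|/4}}$, and conclude via the Dobrushin-uniqueness $\Rightarrow$ Poincar\'e implication (the paper invokes Wu's tensorization result, Lemma~\ref{lemmaA1}, which is your option (i)). The only bookkeeping to watch is that the paper expands $\langle x,Wx\rangle$ so that the conditional depends on $h_i-\tfrac{\beta}{2}\sum_{k\ne i}W_{ik}x_k$ rather than your $h_i-\beta\sum_{k\ne i}W_{ik}x_k$; with that convention the admissible band for the local field has lower endpoint $3|h|/4$ (not $|h|/2$), which is precisely how the exponent $3/4$ arises, resolving the obstacle you anticipated.
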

\wcg{
Building upon the established Poincar\'e inequality in Theorem \ref{thm:4.2}, which quantifies the mixing rate of Glauber dynamics, we can further analyze the convergence properties of Monte Carlo estimators. Specifically, the spectral gap $\gamma_{\text{gap}}$ plays a crucial role in deriving concentration inequalities for averages of samples generated by the dynamics. This leads us to the following corollary concerning the Chernoff-type error bound for our Monte Carlo estimator of the posterior mean.
% From result of theorem \ref{thm:4.2}, we have following corollary about the Chernoff-type error bound for our Monte Carlo estimator of the posterior mean.
\begin{corollary}\label{cor4.4}
Under condition of theorem \ref{thm:4.2}, for a sequence $\{X_1,X_2,\ldots,X_n\}$ sampled from Glauber dynamics, for all $\varepsilon>0$, we have:
$$\begin{aligned}
 P_{q}\left[\left|\frac{1}{n}\sum_{i=1}^nX_i-\mathbb{E}_{\nu_{\beta,h}} [X]\right|\ge\varepsilon\right]\leq C_{\gamma_{\text{gap}},n,q}e^{-\frac{n\varepsilon^2\gamma_{\text{gap}}}{c}},
\end{aligned}$$
where $X_1\sim q$ is the initial distribution of samples, the spectral gap is $\gamma_{\text{gap}}=1-\frac{|h|}{e^{\frac{3|h|}{4}}+e^{-\frac{3|h|}{4}}}$, $c$ is an absolute constant, and $C_{\gamma_{\text{gap}},n,q}$ is a rational function.
% % $$\begin{aligned}
% %     P_{q}&\left[\left|\frac{1}{n}\sum_{i=1}^nX_i-\mathbb{E}_{\nu_{\beta,h}} [X]\right|\ge\varepsilon\right]\\&\leq2e^{\gamma_{\text{gap}}/5}N_q\exp\left(-\frac{n\varepsilon^2\gamma_{\text{gap}}}{4b\cdot(1+g(5\varepsilon/b))}\right),
% % \end{aligned}$$
% % where $X_1\sim q$ is initial distribution of samples. Spectral gap is $\gamma_{\text{gap}}=1-\frac{|h|}{e^{\frac{3|h|}{4}}+e^{-\frac{3|h|}{4}}}$. $N_q:=\left\|\frac{q}{\nu_{\beta,h}}\right\|_2$ characterizes the approximation error induced by sampling before mixing. $b$ is upper bound of $\text{Var}_{\nu_{\beta,h}}[X]$. Function $g$ defined by $g(x)=\frac{1}{2}\left(\sqrt{1+x}-(1-x/2)\right)$. 
% $$\begin{aligned}
%     P_{q}\left[\left|\frac{1}{n}\sum_{i=1}^nX_i-\mathbb{E}_{\nu_{\beta,h}} [X]\right|\ge\varepsilon\right]\leq C_{\gamma_{\text{gap}},n,q}e^{-\frac{n\varepsilon^2\gamma_{\text{gap}}}{c}},
%  \end{aligned}$$
%  where $X_1\sim q$ is initial distribution of samples, spectral gap is $\gamma_{\text{gap}}=1-\frac{|h|}{e^{\frac{3|h|}{4}}+e^{-\frac{3|h|}{4}}}$, $c$ is an absolute constant and $C_{\gamma_{\text{gap}},n,q}$ is a rational function. 
\end{corollary}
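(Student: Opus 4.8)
The plan is to read Theorem~\ref{thm:4.2} as a \emph{spectral gap} statement and then feed this gap into a general-purpose concentration inequality for additive functionals of a reversible Markov chain. Indeed, the Poincar\'e inequality $\mathrm{Var}_{\nu_{\beta,h}}(f)\le \gamma_{\mathrm{gap}}^{-1}\mathcal{E}_{\mathrm{GD}}(f,f)$ is, by the variational characterization of the Dirichlet form, precisely the assertion that the reversible transition operator $P$ of the Glauber chain on $L^2(\nu_{\beta,h})$ has spectral gap at least $\gamma_{\mathrm{gap}}=1-\frac{|h|}{e^{3|h|/4}+e^{-3|h|/4}}$. Since $\nu_{\beta,h}$ is the stationary law and $\mathbb{E}_{\nu_{\beta,h}}[X]$ is the target of the estimator, the remaining task is to turn this gap into an exponential deviation bound for $\frac{1}{n}\sum_{i=1}^n X_i$ started from an arbitrary initial law $q$.

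First I would reduce the vector-valued claim to a scalar one. Because $X_i\in\{-1,1\}^N$, I fix a coordinate $k\in[N]$ and consider the bounded test function $f(x)=x_k\in[-1,1]$ with $\mathbb{E}_{\nu_{\beta,h}}[f]=(\mathbb{E}_{\nu_{\beta,h}}[X])_k$; a union bound over the $N$ coordinates (absorbed into the prefactor) then recovers the stated inequality for $|\cdot|$. After centering $\bar f=f-\mathbb{E}_{\nu_{\beta,h}}[f]$ with $\|\bar f\|_\infty\le 2$, the core estimate is control of the log-Laplace transform of the partial sum $S_n=\sum_{i=1}^n \bar f(X_i)$. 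The key step is the operator/MGF argument in the style of Lezaud and Gillman: writing $M_\theta$ for multiplication by $e^{\theta\bar f/2}$, one has (up to end-point factors) $\mathbb{E}_\pi[e^{\theta S_n}]=\langle \mathbf{1},(M_\theta P M_\theta)^{n}\mathbf{1}\rangle_{\pi}$, so the exponential growth rate of the MGF is governed by the top eigenvalue $\lambda(\theta)$ of the self-adjoint perturbed operator $P_\theta=M_\theta P M_\theta$ on $L^2(\pi)$, where $\pi=\nu_{\beta,h}$. Using the spectral gap of $P$ together with $\|\bar f\|_\infty\le 2$ and perturbation theory (via Kato's inequality, or the variational bound $\lambda(\theta)\le 1+\theta\,\mathbb{E}_\pi[\bar f]+c'\theta^2/\gamma_{\mathrm{gap}}$ for small $\theta$), I would obtain $\log\lambda(\theta)\le c'\theta^2/\gamma_{\mathrm{gap}}$ after centering, and a Chernoff optimization over $\theta$ then yields the Gaussian tail $\exp(-n\varepsilon^2\gamma_{\mathrm{gap}}/c)$ with $c$ absolute, matching the claimed exponent.

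Finally, the prefactor $C_{\gamma_{\mathrm{gap}},n,q}$ arises from two sources: the mismatch between the initial law $q$ and the stationary law $\pi$, which contributes a factor controlled by $\|\mathrm{d}q/\mathrm{d}\pi\|_{L^2(\pi)}$ (this is what makes the prefactor depend on $q$); and the burn-in/geometric-series terms together with the coordinate union bound, whose combination is a rational function of $\gamma_{\mathrm{gap}}$ and $n$. The main obstacle is the eigenvalue estimate: obtaining a clean quadratic-in-$\theta$ bound on $\log\lambda(\theta)$ with the correct $1/\gamma_{\mathrm{gap}}$ scaling uniformly in $\theta$, since the perturbation $M_\theta$ is not small in operator norm. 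I would sidestep most of this by invoking a packaged Markov-chain Bernstein inequality (Lezaud 1998; Leon--Perron 2004; Paulin 2015), for which it suffices to verify reversibility, the spectral gap supplied by Theorem~\ref{thm:4.2}, and boundedness of $f$; the resulting polynomial-mixing consequence is consistent with Theorem~20.6 of \citep{levin2017markov}.
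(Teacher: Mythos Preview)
Your proposal is correct and follows essentially the same route as the paper: both read Theorem~\ref{thm:4.2} as a spectral-gap bound for the reversible Glauber chain on a finite state space and then invoke a packaged Chernoff-type concentration inequality for Markov-chain additive functionals. The paper quotes this as Theorem~3.6 of \cite{dubhashi2009concentration} (itself referencing Lezaud and Gillman, the same sources you cite), checks reversibility and irreducibility, and substitutes the gap from Theorem~\ref{thm:4.2}; your sketch of the Lezaud/Gillman MGF--eigenvalue mechanism and the coordinate-wise reduction with a union bound simply unpacks what the paper leaves inside the black box.
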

This established error bound demonstrates that the Monte Carlo estimator achieves an {exponential error reduction with respect to the sample size $n$}}.

\paragraph{Metropolis Chains.} The Metropolis-Hastings (MH) algorithm is a cornerstone of MCMC sampling methods. As a warm-up case, we analyze the simplified Metropolis chains \cite{martinelli1999lectures} where, for each site $i\in [N]$, the transition $x\rightarrow x^{i}$ occurs with probability:
\begin{align*}
	P(x^{i}\mid x)&=\min\{1,\frac{\nu_{\beta,h}(x^{i})}{\nu_{\beta,h}(x)}\}\\
    &=\min\{1,e^{2\beta x_{i}\sum_{j\neq i}W_{ij}x_{j}-2h_{i}x_{i}}\},
\end{align*}
and remains unchanged with probability:
$1-P(x^{i}\mid x)=1-\min\{1,e^{2\beta x_{i}\sum_{j\neq i}W_{ij}x_{j}-2h_{i}x_{i}}\}.$
For these Metropolis chains, we establish the following Poincar\'e inequality:
\begin{theorem}\label{thm:classmh}
For the Gibbs measure \eqref{eq:nu} satisfying the large field Condition \eqref{assump1}, the following Poincar\'e inequality holds:
\begin{align}
	\text{Var}_{\nu_{\beta,h}}(f)\leq \frac{1}{1-2|h|e^{-|h|}}\mathcal{E}_{\text{MH}}(f,f),
\end{align}	
where $\mathcal{E}_{\text{MH}}(f,f)$ is the Dirichlet form \eqref{eq:mh_df} associated with MH algorithm on $L^{2}(\nu_{\beta,h})$.
\end{theorem}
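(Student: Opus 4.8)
The goal is to lower-bound the spectral gap $\gamma_{\text{gap}}=\inf_f \mathcal{E}_{\text{MH}}(f,f)/\mathrm{Var}_{\nu_{\beta,h}}(f)$ by $1-2|h|e^{-|h|}$. My plan is to reduce the global inequality to a single-site (two-state) analysis and then recombine the single-site estimates through an approximate tensorization of variance, with the large-field Condition \ref{assump1} supplying the quantitative control in both steps. This parallels the structure I would use for the Glauber case in Theorem \ref{thm:4.2}.

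\emph{Step 1 (local two-state analysis).} Fixing the boundary configuration $x_{\sim i}$, the law of the single spin $x_i$ under $\nu_{\beta,h}$ is a two-point measure with effective field $\theta_i = h_i - \beta\sum_{j\neq i}W_{ij}x_j$. Condition \ref{assump1} forces $|\theta_i| \geq |h| - \beta\sum_{j\neq i}|W_{ij}| \geq |h|/2$ uniformly in the boundary, which is the only place the field bound enters locally. For the two-state Metropolis kernel I will compute the local spectral gap explicitly as $p_{+-}+p_{-+} = 1+e^{-2|\theta_i|}\geq 1$; equivalently, the conditional variance is dominated by the conditional Metropolis Dirichlet form, $\mathrm{Var}_{\nu_{\beta,h}}(f\mid x_{\sim i}) \leq \mathcal{E}_i(f,f\mid x_{\sim i})$. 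Taking $\mathbb{E}_{\nu_{\beta,h}}$ and summing over sites (consistent with the normalization of the Metropolis Dirichlet form) yields $\sum_i \mathbb{E}[\mathrm{Var}_{\nu_{\beta,h}}(f\mid x_{\sim i})] \leq \mathcal{E}_{\text{MH}}(f,f)$.

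\emph{Step 2 (approximate tensorization).} The remaining task is to bound the global variance by the sum of conditional variances, $\mathrm{Var}_{\nu_{\beta,h}}(f) \leq \frac{1}{1-2|h|e^{-|h|}}\sum_i \mathbb{E}[\mathrm{Var}_{\nu_{\beta,h}}(f\mid x_{\sim i})]$. I would establish this through the Dobrushin influence-matrix method: define the influence $R_{ij}$ of spin $j$ on the conditional law of spin $i$ and invoke the standard tensorization estimate, whose constant is $\frac{1}{1-\max_i\sum_j R_{ij}}$. The key quantitative input is that flipping $x_j$ shifts $\theta_i$ by $2\beta W_{ij}$, so $R_{ij}$ is controlled by $\mathrm{sech}^2(\theta_i)\cdot 2\beta|W_{ij}|$; using $|\theta_i|\geq|h|/2$ gives $\mathrm{sech}^2(\theta_i)\lesssim e^{-|h|}$, while the field condition gives $2\beta\sum_{j\neq i}|W_{ij}|\leq |h|$, so that $\sum_j R_{ij}\leq 2|h|e^{-|h|}$. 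Chaining the two steps produces the claimed constant.

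The hard part is Step 2: proving the approximate tensorization with a sharp constant and verifying the influence bound, since $\nu_{\beta,h}$ is not a product measure and the spins are correlated through $W$. The large-field condition plays the role of a Dobrushin smallness condition that makes the influence matrix a contraction ($\max_i\sum_j R_{ij}<1$) and hence makes the tensorization valid; the local two-state estimate of Step 1 is then routine. A viable alternative to the influence-matrix route is a Lu--Yau-type martingale decomposition of the variance along a fixed ordering of the sites, estimating each increment by a conditional variance and absorbing the correlation corrections via the same $\mathrm{sech}^2(\theta_i)$ bound; I would keep this in reserve in case the constant emerging from the influence method is not tight enough to reach exactly $1-2|h|e^{-|h|}$.
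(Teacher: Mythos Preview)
Your two-step strategy is correct and does prove the theorem, but it follows a genuinely different route from the paper. The paper works directly with the Dobrushin interdependence matrix of the \emph{Metropolis kernel} itself: it bounds
\[
c_i(j)=\sup_{x=\hat x\text{ off }j}\bigl|P(x^i\mid x)-P(\hat x^i\mid\hat x)\bigr|
\]
by exploiting the Lipschitz constant of $y\mapsto\min\{1,e^y\}$ on the region $|y|\ge|h|$ (which is $e^{-|h|}$), obtaining $c_i(j)\le 4\beta|W_{ij}|e^{-|h|}$ and hence $\|C\|_\infty\le 2|h|e^{-|h|}$, and then invokes Wu's Poincar\'e criterion for general single-site generators. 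Your approach instead factors through the Glauber Dirichlet form: Step~1 is the observation that the single-site Metropolis gap on a two-point space is $1+e^{-2|\theta_i|}\ge 1$, giving $\mathcal{E}_{\mathrm{GD}}(f,f)=\sum_i\mathbb{E}[\mathrm{Var}_{\nu_i}(f)]\le\mathcal{E}_{\mathrm{MH}}(f,f)$; Step~2 is precisely the Glauber tensorization (the paper's Lemma~\ref{lemmaA1}), whose influence matrix $R$ depends only on the Gibbs conditionals and not on the algorithm. The paper's route is more direct and extends uniformly to other single-site proposals (as in Theorem~\ref{thm:abs}), since one only needs a Lipschitz estimate on the new transition probability. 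Your route is more modular---the tensorization constant is computed once for the measure and can be reused across algorithms---and in fact yields a slightly sharper constant here: a careful accounting (noting $|p'(\theta)|=\tfrac12\mathrm{sech}^2\theta$, so your formula $R_{ij}\le\mathrm{sech}^2(\theta_i)\cdot 2\beta|W_{ij}|$ is off by a factor of two) gives $\sum_jR_{ij}\le 2|h|/(e^{|h|/2}+e^{-|h|/2})^2<2|h|e^{-|h|}$, so the stated constant is recovered with room to spare.
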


\subsection{Poincar\'e Inequality for Advanced DMCMC Algorithms}\label{sec:4.3}
 % Building on our previous analysis of classical MH algorithm, we now investigate a broader class of DMCMC samplers with gradient-informed proposals introduced in Section \ref{sec:2.2}. Under Condition \ref{assump1}, we establish similar Poincaré inequalities for these more sophisticated sampling methods.

 While Section \ref{sec:4.2} provides warm-up cases for classical algorithms, here we analyze more sophisticated DMCMC samplers. We propose a general condition that not only encompasses these warm-up cases but also extends to advanced gradient-informed proposals introduced in Section \ref{sec:2.2}. Under this natural condition, we establish similar Poincaré inequalities for these sampling methods.

\paragraph{Single-Site Metropolis-Hastings Algorithms} Consider a MH algorithm that updates one site at a time with transition kernel:
\begin{align}\label{eq:MH kernel}
P(x^i| x)=\Psi(x^i| x)\min\left\{1,\frac{\nu_{\beta,h}(x^i)\Psi(x| x^i)}{\nu_{\beta,h}(x)\Psi(x^i| x)}\right\},
\end{align}
and remains at the current state with probability:
$P(x^i| x)=1-\Psi(x^i| x)\min\left\{1,\nu_{\beta,h}(x^i)\Psi(x| x^i)/\nu_{\beta,h}(x)\Psi(x^i| x)\right\},$
where $\Psi$ denotes the proposal distribution. This formulation generalizes both the GWG transition kernel \eqref{eq:gwg} and the PAS kernel \eqref{eq:pas} in their single-site update variants.
\begin{theorem}\label{thm:abs}
For the MH kernel \eqref{eq:MH kernel}, assume the proposal $\Psi(x^{i}\mid x)$ is chosen such that $P(x^{i}\mid x)$ is Lipschitz continuous for all $x=\hat{x} \ \rm off \ j$:
\begin{align*}
\left|P(x^{i}\mid x)-P(\hat{x}^{i}\mid \hat{x})\right|\leq C_{Lip}(\beta,h)|\beta W_{ij}x_{i}x_{j}|,
\end{align*}
where $C_{Lip}(\beta,h)$ decreases exponentially to 0 as the external field $|h|$ tends to infinity. Then for the Gibbs measure \eqref{eq:nu} satisfying the large field Condition \ref{assump1}, the following Poincaré inequality holds:
\begin{align}
	\text{Var}_{\nu_{\beta,h}}(f)\leq \frac{1}{1-C_{Lip}(\beta,h)|h|}\mathcal{E}_{\text{MH}}(f,f),
\end{align}	
where $\mathcal{E}_{\text{MH}}(f,f)$ is the Dirichlet form \eqref{eq:mh_df} associated with MH dynamics on $L^{2}(\nu_{\beta,h})$.
\end{theorem}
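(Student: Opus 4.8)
The plan is to deduce the Poincaré inequality from a lower bound on the spectral gap $\gamma_{\text{gap}}$ of the single-site chain with kernel \eqref{eq:MH kernel}. Since this kernel satisfies detailed balance with respect to $\nu_{\beta,h}$ on the finite space $\{-1,1\}^N$, the generator is self-adjoint on $L^2(\nu_{\beta,h})$ and the optimal Poincaré constant equals $1/\gamma_{\text{gap}}$; hence it suffices to prove $\gamma_{\text{gap}}\ge 1-C_{Lip}(\beta,h)|h|$. I would obtain this via a path-coupling (Dobrushin-influence) argument: take two copies of the chain started from configurations $x$ and $\hat x$ that agree off a single site $j$, run one coupled update, and track the expected Hamming distance. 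The same template specializes to the explicit constants of Theorems \ref{thm:4.2} and \ref{thm:classmh} once $C_{Lip}$ is computed for the Glauber and classical Metropolis kernels.

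The coupled step splits into two competing effects. When the update targets the differing site $j$, the strong field forces coalescence: because $x$ and $\hat x$ share the environment $x_{\sim j}=\hat x_{\sim j}$, the proposals are mirror images, and under Condition \ref{assump1} the field term $h_j$ dominates $\beta\sum_{k\neq j}|W_{jk}|$ in the acceptance ratio, so a carefully chosen coupling of the accept/reject decisions merges the two chains with probability approaching $1$ as $|h|\to\infty$; this supplies the leading $1$ in the gap. When the update instead targets a neighbor $i\neq j$, the two chains may disagree at $i$, and under a maximal coupling of that single binary update the probability of disagreement equals exactly $|P(x^i\mid x)-P(\hat x^i\mid \hat x)|$, which the Lipschitz hypothesis bounds by $C_{Lip}(\beta,h)|\beta W_{ij}x_ix_j|=C_{Lip}(\beta,h)\beta|W_{ij}|$.

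Summing these expansion contributions over all neighbors $i\neq j$ and invoking the large-field Condition \ref{assump1}, which controls $\beta\sum_{i\neq j}|W_{ij}|$ by a multiple of $|h|$, bounds the aggregate cross-influence by $C_{Lip}(\beta,h)|h|$. The expected Hamming distance therefore contracts at rate at least $1-C_{Lip}(\beta,h)|h|$, which is positive once $|h|$ is large enough that $C_{Lip}(\beta,h)|h|<1$ — guaranteed since $C_{Lip}(\beta,h)$ decays exponentially in $|h|$ while the summation grows only linearly. A standard comparison theorem converting a Hamming/Wasserstein contraction rate of a reversible chain into a spectral-gap lower bound then yields $\gamma_{\text{gap}}\ge 1-C_{Lip}(\beta,h)|h|$, hence $\text{Var}_{\nu_{\beta,h}}(f)\le \frac{1}{1-C_{Lip}(\beta,h)|h|}\mathcal{E}_{\text{MH}}(f,f)$. (An analytic alternative is a discrete gradient/commutator estimate on the generator, where the same Lipschitz bound controls the term $\nabla_j\mathcal{L}f-\mathcal{L}\nabla_j f$; this route directly compares $\text{Var}$ and $\mathcal{E}_{\text{MH}}$.)

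I expect the main obstacle to be the first effect: rigorously certifying the base contraction $1$ at the differing site for a genuine accept/reject kernel. Unlike Glauber resampling, where identical environments force instantaneous coalescence, the MH chain can \emph{swap} the two configurations under a naive coupling, so one must design the coupling of the accept/reject coins carefully and use Condition \ref{assump1} to control the residual swap probability $\min\{1,\nu_{\beta,h}(x)/\nu_{\beta,h}(x^j)\}$. A secondary difficulty is matching constants so the contraction rate reproduces the Dirichlet form $\mathcal{E}_{\text{MH}}$ with coefficient exactly $1/(1-C_{Lip}(\beta,h)|h|)$ rather than a loose multiple; aligning the $\beta W_{ij}$ scale of the acceptance ratio with the $2\beta\sup_{i}\sum_{k\neq i}|W_{ik}|$ appearing in Condition \ref{assump1} is what pins down the clean $|h|$ factor.
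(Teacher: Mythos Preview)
Your approach is correct and at its core the same as the paper's: both are Dobrushin--influence arguments where the Lipschitz hypothesis directly bounds the off-diagonal influence $c_i(j)=\sup_{x=\hat x\text{ off }j}\lvert P(x^i\mid x)-P(\hat x^i\mid\hat x)\rvert\le C_{Lip}(\beta,h)\,\beta\lvert W_{ij}\rvert$, and Condition~\ref{assump1} then yields the row-sum bound $\|C\|_\infty\le C_{Lip}(\beta,h)\lvert h\rvert$. The packaging differs, though. The paper does not build an explicit coupling: it simply recognizes $c_i(j)$ as the total-variation distance between the single-site kernels $\nu_i(x,\cdot)$ and $\nu_i(\hat x,\cdot)$, records that each such kernel has total mass $\nu_i(x,E^i)=1$ (so $\eta=1$ in the notation of Lemma~\ref{lemmaA2}), and then invokes Wu's Dobrushin-uniqueness Poincar\'e result as a black box to obtain $\gamma_{\text{gap}}\ge 1-\|C\|_\infty$.

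This means your stated ``main obstacle'' --- rigorously certifying the base contraction $1$ at the differing site $j$ via a carefully designed accept/reject coupling --- is not actually needed. In the Wu framework the ``$1$'' in $1-C_{Lip}(\beta,h)\lvert h\rvert$ is the minimum total jump rate $\eta$, not a coalescence probability that must be produced by coupling. Your explicit path-coupling route would require handling the $i=j$ contribution separately; under Condition~\ref{assump1} this term is indeed exponentially small in $\lvert h\rvert$ (with $x,\hat x$ sharing the environment $x_{\sim j}$, exactly one of $P(x^j\mid x),P(\hat x^j\mid\hat x)$ is near $1$ and the other near $0$, so $\lvert 1-P(x^j\mid x)-P(\hat x^j\mid\hat x)\rvert$ is small), so your argument can be completed. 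But quoting Wu directly sidesteps the coupling construction entirely and gives the exact constant $\frac{1}{1-C_{Lip}(\beta,h)\lvert h\rvert}$ without the bookkeeping you anticipate.
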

\begin{remark}
Judging from the above warm-up cases and the calculation examples below, for the measure $\nu_{\beta,h}$ we are considering, the Lipschitz properties of general cases can be expected.
\end{remark}
    
Particularly, for gradient-informed proposals
\begin{align*}
\Psi( x^i\mid  x):&=\frac{1}{Z_{\beta,h}( x)}e^{\frac{1}{2}\nabla U(x)_i(x^i-x)},
%\\&=\frac{1}{Z_{\beta,h}( x)}e^{-\frac{\beta}{2}\langle x,W( x^{i}- x)\rangle+\frac{1}{2}\langle x^{i}- x,h\rangle},	
\end{align*}	
where $U(x)=-\frac{\beta}{2}\langle x,W x\rangle+\langle x,h\rangle$, we establish the following result:

\begin{theorem}\label{thm:1site}
For the Gibbs measure \eqref{eq:nu} satisfying the large field Condition \ref{assump1}, the following Poincaré inequality holds for the gradient-informed Metropolis-Hastings algorithm:
\begin{align}
	\text{Var}_{\nu_{\beta,h}}(f)\leq \frac{1}{1-\frac{|h|}{(e^{\frac{|h|}{4}}+e^{-\frac{|h|}{4}})^{2}}}\mathcal{E}_{\text{MH}}(f,f),
\end{align}	
where $\mathcal{E}_{\text{MH}}(f,f)$ denotes the Dirichlet form associated with the MH algorithm on $L^{2}(\nu_{\beta,h})$.
\end{theorem}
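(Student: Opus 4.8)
The plan is to obtain Theorem~\ref{thm:1site} as a concrete instance of the abstract criterion in Theorem~\ref{thm:abs}. That theorem already delivers the Poincar\'e inequality with constant $1/\bigl(1-C_{Lip}(\beta,h)|h|\bigr)$ once the single-site kernel is shown to be Lipschitz in the stated sense, so the entire task reduces to writing down the gradient-informed kernel explicitly and identifying its Lipschitz constant. Concretely, I must show $C_{Lip}(\beta,h)=1/(e^{|h|/4}+e^{-|h|/4})^2$, since this value makes the resulting constant coincide exactly with the claimed $1/\bigl(1-|h|/(e^{|h|/4}+e^{-|h|/4})^2\bigr)$. Everything else in Theorem~\ref{thm:abs} (notably the requirement that $C_{Lip}\to 0$ exponentially) will follow for free once this identity is established.

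First I would compute the kernel. Substituting $U(x)=-\tfrac{\beta}{2}\langle x,Wx\rangle+\langle x,h\rangle$ into $\Psi(x^i\mid x)\propto e^{\frac12\nabla U(x)_i(x^i-x)}$ and using that only the $i$-th coordinate changes, the exponent reduces to a scalar built from $\nabla U(x)_i=-\beta(Wx)_i+h_i$. Assuming $W$ symmetric with zero diagonal, so that $\nabla U(x^i)_i=\nabla U(x)_i$, a direct calculation shows that the energy difference $U(x^i)-U(x)$ cancels against the gradient-proposal ratio in the Metropolis acceptance factor $\nu_{\beta,h}(x^i)\Psi(x\mid x^i)/\bigl(\nu_{\beta,h}(x)\Psi(x^i\mid x)\bigr)$; this cancellation is exactly the locally-balanced property of the proposal. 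The outcome is that $P(x^i\mid x)$ can be written as a single scalar map $\phi$ of the quantity $b:=x_i\nabla U(x)_i$, where $\phi$ is a logistic-type function composed with the $\min$ in the acceptance rule.

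Next I would exploit the single-coordinate structure. For $x=\hat{x} \ \mathrm{off}\ j$ with $j\neq i$, only $(Wx)_i$ changes, by exactly $2W_{ij}x_j$, so $b$ shifts while every other argument is untouched; the mean value theorem then bounds $|P(x^i\mid x)-P(\hat{x}^i\mid\hat{x})|$ by $\bigl(\sup|\phi'|\bigr)$ times the induced change in $b$, which is proportional to $\beta W_{ij}x_ix_j$. The supremum of $|\phi'|$ is taken over the \emph{reachable} range of $b$, and this is where Condition~\ref{assump1} is essential: the large-field bound forces $|\nabla U(x)_i|\ge|h_i|-\beta\sum_{k\neq i}|W_{ik}|\ge|h|/2$, hence $|b|\ge|h|/2$ uniformly. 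Since the logistic derivative equals $1/(e^{b/2}+e^{-b/2})^2$, which is maximized at $b=0$ and strictly decreasing in $|b|$, restricting to $|b|\ge|h|/2$ yields $\sup|\phi'|\le 1/(e^{|h|/4}+e^{-|h|/4})^2$, decaying exponentially in $|h|$ precisely as Theorem~\ref{thm:abs} demands. Substituting $C_{Lip}(\beta,h)=1/(e^{|h|/4}+e^{-|h|/4})^2$ into Theorem~\ref{thm:abs} then finishes the argument.

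The main obstacle I anticipate is the exact bookkeeping in the last two steps rather than any conceptual gap. One must track carefully the factors of two entering through the flip $x^i_i-x_i=-2x_i$ and the perturbation $\Delta(Wx)_i=2W_{ij}x_j$, reconcile them against the coordinate convention used in the proposal, and confirm that the normalization of $\Psi$ does not introduce additional coordinate coupling. Equally delicate is verifying that the $\min$ in the acceptance rule does not spoil the clean derivative bound: the map $\phi$ is piecewise, so I would check the branch $b>0$ separately and confirm that its derivative on the reachable region $|b|\ge|h|/2$ is in fact no larger than the bound attained on the branch $b\le-|h|/2$, where the logistic derivative dominates. Once these factors are pinned down and the identity for $C_{Lip}(\beta,h)$ is confirmed, the Poincar\'e inequality is immediate from Theorem~\ref{thm:abs}.
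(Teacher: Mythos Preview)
Your strategy is essentially the paper's own: write $P(x^i\mid x)$ as a scalar function $\phi$ of $b=x_i\nabla U(x)_i$, exploit Condition~\ref{assump1} to confine $|b|\ge|h|/2$, bound $|\phi'|$ on that region by $1/(e^{|h|/4}+e^{-|h|/4})^2$, and feed the resulting Dobrushin bound into Lemma~\ref{lemmaA1}. Your identification of the dominant branch (logistic on $b\le -|h|/2$, with the $b\ge|h|/2$ branch strictly smaller) is correct, and the observation that $b$ and $\hat b$ stay in the same half-line because $h_ix_i$ is unaffected by flipping $x_j$ resolves the piecewise concern.

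There is, however, one bookkeeping snag in routing the argument through Theorem~\ref{thm:abs} as a black box. When you track the factors of two you flagged, the perturbation in $b$ is $|\Delta b|=2\beta|W_{ij}|$, so the Lipschitz estimate reads
\[
|P(x^i\mid x)-P(\hat x^i\mid\hat x)|\le \frac{2\beta|W_{ij}|}{(e^{|h|/4}+e^{-|h|/4})^2},
\]
i.e.\ $C_{Lip}=2/(e^{|h|/4}+e^{-|h|/4})^2$, not $1/(e^{|h|/4}+e^{-|h|/4})^2$. Plugging this into Theorem~\ref{thm:abs} as stated gives the constant $1/\bigl(1-2|h|/(e^{|h|/4}+e^{-|h|/4})^2\bigr)$, off by a factor of two from the claim. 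The slack is not in your computation but in Theorem~\ref{thm:abs} itself: its proof passes from $c_i(j)\le C_{Lip}\beta|W_{ij}|$ to $\|C\|_\infty\le C_{Lip}|h|$, whereas Condition~\ref{assump1} actually gives $\beta\sum_j|W_{ij}|\le|h|/2$. The paper's proof of Theorem~\ref{thm:1site} avoids this by bounding $\|C\|_\infty$ directly from the entrywise estimate and then invoking Lemma~\ref{lemmaA1}, rather than quoting Theorem~\ref{thm:abs}. You should do the same (or equivalently use the sharpened conclusion $1/(1-C_{Lip}|h|/2)$) to land on the stated constant.
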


\paragraph{DULA} The Discrete Unadjusted Langevin Algorithm (DULA) performs simultaneous updates across all sites. Its transition kernel takes the form:
\begin{align}\label{eq:multi kernel}
P(x^{[N]}| x)=\Psi(x^{[N]}| x)=\prod_{i\in {[N]}}\Psi(x^i| x),
\end{align}
with probability $1-\Psi(x^{[N]}| x)$ of remaining at the current state. The gradient-informed proposal for each site is given by:
 \begin{align*}
\Psi( x^i\mid  x):&=\frac{1}{Z_{\beta,h}( x)}e^{\frac{1}{2}\nabla U( x)_i( x^i- x)}\\&=\frac{1}{Z_{\beta,h}( x)}e^{-\frac{\beta}{2}\langle x,W( x^{i}- x)\rangle+\frac{1}{2}\langle x^{i}- x,h\rangle}\\
&=\frac{1}{Z_{\beta,h}( x)}e^{\beta x_{i}\sum_{j\neq i}W_{ij} x_{j}-h x_{i}}
\end{align*}	
which aligns with the proposal \eqref{eq:dmala}. For this algorithm, we establish:
\begin{theorem}\label{thm:product}
For the Gibbs measure \eqref{eq:nu} satisfying the large field Condition \ref{assump1}, the following Poincaré inequality holds:
\begin{align}
	\text{Var}_{\nu_{\beta,h}}(f)\leq \frac{1}{1-\frac{4|h|N}{(e^{\frac{|h|}{4}}+e^{-\frac{|h|}{4}})^{2}}}\mathcal{E}(f,f),
\end{align}	
where $$
	\mathcal{E}(f,f):=\frac{1}{2}\int\left(f(x)-f(x^{[N]})\right)^2\nu_{\beta,h}(\mathrm{d}x)P(\mathrm{d}x^{[N]}| x)$$ 
is the Dirichlet form associated with DULA on $L^{2}(\nu_{\beta,h})$.
\end{theorem}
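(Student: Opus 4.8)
The plan is to prove the inequality in its equivalent quadratic-form version. Since both sides are unchanged by adding a constant to $f$, I may assume $\mathbb{E}_{\nu_{\beta,h}}[f]=0$; writing $P$ for the parallel kernel \eqref{eq:multi kernel} and using that it is reversible with respect to $\nu_{\beta,h}$, the Dirichlet form is $\mathcal{E}(f,f)=\|f\|_{\nu_{\beta,h}}^2-\langle f,Pf\rangle_{\nu_{\beta,h}}$, so the asserted Poincaré inequality is exactly the statement that $\langle f,Pf\rangle_{\nu_{\beta,h}}\le 4N\delta\,\|f\|_{\nu_{\beta,h}}^2$ for every mean-zero $f$, where $\delta:=|h|/(e^{|h|/4}+e^{-|h|/4})^2$. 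Thus it suffices to bound the quadratic form of $P$ on mean-zero functions, i.e.\ to show that a single DULA step very nearly projects onto constants.

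The intuition making this possible is that, under Condition~\ref{assump1}, the product proposal $P(\cdot\mid x)=\prod_i\Psi(x^i\mid x)$ concentrates near one fixed configuration essentially independent of the current state $x$: for each site the flip probability is $p_i(x)=1/(1+e^{\nabla U(x)_i x_i})$, and Condition~\ref{assump1} gives $\beta\,|\sum_{j\neq i}W_{ij}x_j|\le |h|/2$, hence $|\nabla U(x)_i\,x_i|\ge |h|/2$; consequently the per-site symmetric weight $p_i(x)(1-p_i(x))=1/(4\cosh^2(\tfrac12\nabla U(x)_i x_i))$ is bounded uniformly by $(e^{|h|/4}+e^{-|h|/4})^{-2}$, the same per-coordinate factor that governs the single-site bound of Theorem~\ref{thm:1site}. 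I would record this uniform per-site estimate first, as it is the only place Condition~\ref{assump1} enters and it supplies the exponential-in-$|h|$ decay.

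With the per-site estimate in hand, I would expand $\langle f,Pf\rangle_{\nu_{\beta,h}}=\mathbb{E}_{x\sim\nu_{\beta,h}}\,\mathbb{E}_{z\sim P(\cdot\mid x)}[f(x)f(z)]$ and replace the product law of $z$ by its rank-one reference coordinate by coordinate through a hybrid (telescoping) argument over the $N$ sites. Each replacement of one coordinate changes the conditional law by at most the per-site flip weight bounded above, so the total deviation of $P$ from the projection onto constants accumulates additively over the $N$ coordinates; this is the origin of the factor $N$, while the constant $4$ comes from the explicit per-coordinate bound. Combining the $N$ coordinate estimates yields $\langle f,Pf\rangle_{\nu_{\beta,h}}\le 4N\delta\,\|f\|_{\nu_{\beta,h}}^2$ and hence the claimed Poincaré constant $1/(1-4N\delta)$.

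The main obstacle is precisely this accumulation step, and it is genuinely harder than in the single-site theorems. Because a single DULA transition resamples all coordinates simultaneously, the Dirichlet form does not split over sites for free; in the hybrid argument the intermediate laws are products of per-site proposals $\Psi(\cdot\mid x)$ whose factors all depend on the same state $x$ through the interaction term $\beta W$, so the per-coordinate replacements are coupled rather than independent. Controlling these couplings uniformly in $x$---equivalently, showing that the products of stay-probabilities and the ratios of $\nu_{\beta,h}$ along the hybrid path do not degrade the per-site bound---is the crux, and it is exactly here that the large-field Condition~\ref{assump1} is indispensable and where the extra factor $N$ (absent from the single-site guarantees of Theorems~\ref{thm:classmh} and~\ref{thm:1site}) arises. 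A minor preliminary point is to justify that $P$ is reversible with respect to $\nu_{\beta,h}$, so that the variational identity $\mathcal{E}(f,f)=\|f\|^2-\langle f,Pf\rangle$ used at the outset is valid.
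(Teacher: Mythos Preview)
Your plan has a genuine gap at its starting point. You rely on the identity $\mathcal{E}(f,f)=\|f\|_{\nu_{\beta,h}}^2-\langle f,Pf\rangle_{\nu_{\beta,h}}$ and flag the required reversibility of $P$ as ``a minor preliminary point.'' It is not minor: it is false. DULA is the \emph{unadjusted} product proposal $P(\cdot\mid x)=\prod_i\Psi(x^i\mid x)$, and already the single-site factor fails detailed balance with $\nu_{\beta,h}$; from the computation in the proof of Theorem~\ref{thm:1site},
\[
\frac{\nu_{\beta,h}(x^i)\,\Psi(x\mid x^i)}{\nu_{\beta,h}(x)\,\Psi(x^i\mid x)}
\;=\; e^{\beta x_i\sum_{k\ne i}W_{ik}x_k-h_ix_i}\;\neq\;1.
\]
This is exactly why DMALA needs the MH correction. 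In fact $\nu_{\beta,h}$ is not even $P$-invariant---one checks this already for $W=0$, $h\neq 0$---so the variational identity you invoke collapses, and the reduction to bounding $\langle f,Pf\rangle$ on mean-zero functions cannot be carried out. The obstacle you anticipated (controlling the coordinate couplings in the hybrid argument) is therefore not the main one.

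The paper circumvents reversibility entirely by working through the Dobrushin interdependence matrix (the machinery of Lemmas~\ref{lemmaA1}--\ref{lemmaA2}). For each $j$ one bounds
\[
c_{[N]}(j)\;=\;\sup_{x=\hat x\ \mathrm{off}\ j}\Bigl|\prod_{i}\Psi(x^i\mid x)-\prod_{i}\Psi(\hat x^i\mid\hat x)\Bigr|
\;\le\;\sum_{i}\bigl|\Psi(x^i\mid x)-\Psi(\hat x^i\mid\hat x)\bigr|,
\]
the sensitivity of the full product kernel to a flip of one \emph{input} coordinate, telescoped over the $N$ output factors. Your per-site Lipschitz estimate under Condition~\ref{assump1} is then precisely what controls each summand, giving $c_{ij}=c_{[N]}(j)\le 4|h|/(e^{|h|/4}+e^{-|h|/4})^2$ and hence $\|C\|_\infty\le 4N|h|/(e^{|h|/4}+e^{-|h|/4})^2$; Wu's spectral-radius criterion delivers the Poincar\'e constant. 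So your per-coordinate bound and the telescoping intuition are the right ingredients, but they must be fed into a framework that measures how the kernel varies with the starting state, rather than into a spectral comparison that presupposes $P$ is self-adjoint on $L^2(\nu_{\beta,h})$.
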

\wcg{\begin{remark}
Similar to Corollary \ref{cor4.4}, for Metropolis chains, single-site gradient-informed MH algorithms, and DULA, the corresponding Chernoff-type error bounds of the Monte Carlo estimator for the posterior mean can also be obtained. These are derived from Theorems \ref{thm:classmh}, \ref{thm:1site}, and \ref{thm:product}, respectively. For a detailed exposition, please refer to Appendix \ref{sec:B.7}.
\end{remark}}

% \zwc{In Section \ref{sec:4.3}, we examine the one-site flipping MH kernel \eqref{eq:MH kernel}, where the proposal distribution $\Psi$ can be chosen in various ways, such as \eqref{eq:gwg} and \eqref{eq:pas}. The strategy of flipping one site at a time aligns with the configurations used in our experiments for both GWG and PAS samplers. Furthermore, the kernel \eqref{eq:multi kernel} we consider here corresponds precisely to the DULA.}

\section{Related Work}
Our work connects to two main research areas: discrete MCMC sampling and stochastic localization. Recent advances in discrete MCMC sampling have moved beyond traditional Gibbs sampling \citep{dai2020learning,wang2019bert} to gradient-informed methods, including Gibbs With Gradients \citep{grathwohl2021oops}, Path Auxiliary Sampler \citep{sun2021path}, and Discrete Langevin Algorithm \citep{zhang2022langevin}. These methods enhance sampling efficiency by incorporating local structure information into proposal distributions.
Stochastic localization (SL) was initially developed for proving measure properties \citep{eldan2013thin,eldan2020taming} and has recently emerged as a powerful sampling framework \citep{el2022sampling,DBLP:conf/icml/GreniouxNGD24}. While SL has shown success in continuous domains through SLIPS \citep{DBLP:conf/icml/GreniouxNGD24} and specific discrete models like SK models \citep{el2022sampling}, theoretical analysis for general discrete distributions remains challenging due to their inherent structural constraints.
For a comprehensive review of related work, please refer to Appendix \ref{app:related}.

\begin{table*}[ht]
\caption{Objective values ($\uparrow$) on MIS benchmarks with different samplers and their SL variants. For each sampler, we compare the original method and its SL counterpart. Bold values indicate better performance between the paired methods.}
\label{tab:mis_results}
   \centering
\begin{tabular}{l|cccc|c}
\toprule
& \multicolumn{4}{c|}{ER-DENSITY} & SATLIB \\
& R-0.05 & R-0.10 & R-0.20 & R-0.25 & \\
\midrule
GWG & 104.219 $\pm$ 1.63 & 61.750 $\pm$ 0.90 & 34.125 $\pm$ 0.48 & 27.813 $\pm$ 0.63 & 419.063 $\pm$ 14.29 \\
SL-GWG & \textbf{104.375 $\pm$ 1.52} & \textbf{61.938 $\pm$ 0.93} & \textbf{34.375 $\pm$ 0.74} & \textbf{28.000 $\pm$ 0.56} & \textbf{419.165 $\pm$ 14.39} \\
\midrule
PAS & 104.375 $\pm$ 1.64 & 61.750 $\pm$ 0.94 & 34.063 $\pm$ 0.61 & 27.813 $\pm$ 0.46 & 419.539 $\pm$ 14.39 \\
SL-PAS & \textbf{104.531 $\pm$ 1.48} & \textbf{61.906 $\pm$ 0.88} & \textbf{34.375 $\pm$ 0.55} & \textbf{28.031 $\pm$ 0.59} & \textbf{419.707 $\pm$ 14.35} \\
\midrule
DMALA & 103.063 $\pm$ 1.62 & 61.000 $\pm$ 0.83 & 33.813 $\pm$ 0.68 & 27.438 $\pm$ 0.56 & \textbf{415.722 $\pm$ 14.51} \\
SL-DMALA & \textbf{103.594 $\pm$ 1.80} & \textbf{61.344 $\pm$ 0.96} & \textbf{34.000 $\pm$ 0.56} & \textbf{27.750 $\pm$ 0.66} & 415.718 $\pm$ 14.38 \\
\bottomrule
\end{tabular}
\end{table*}

\section{Empirical Results}
In this section, we empirically validate the sampling performance of the SL framework. Following the DISCS benchmark \citep{goshvadi2024discs} \footnote{While MCMC samplers ideally generate i.i.d. samples from the target distribution after mixing, SL converges to a Dirac distribution of a single sample from the target distribution, making effective sample size metrics from DISCS \citep{goshvadi2024discs} unsuitable for performance evaluation. Therefore, we do not compare different sampling algorithms on traditional physics models under this metric.}, we evaluate on three types of combinatorial optimization problems with the form of binary quadratic distributions: maximum independent set, maximum cut, and maximum clique problems, each containing multiple diverse datasets (14 datasets in total). Detailed information about these problems and datasets is provided in Appendix \ref{app: qubo}. 

\paragraph{Experimental Settings} We employ three advanced discrete MCMC samplers: Gibbs with Gradients (GWG) \citep{grathwohl2021oops}, Path Auxiliary Sampler (PAS) \citep{sun2021path}, and Discrete Metropolis-Adjusted Langevin Algorithm (DMALA) \citep{zhang2022langevin}, all incorporating Glauber dynamics and Metropolis-Hastings algorithms.
To align with our theoretical framework, we implement GWG and PAS with single-site updates, while using the Metropolis-Hastings adjusted version of DULA, that is DMALA.
For SL implementation, we adopt the GEOM(2,1) $\alpha$ schedule and uniform SDE time discretization from SLIPS \citep{DBLP:conf/icml/GreniouxNGD24}. Hyperparameters are tuned through comprehensive search. For fair comparison when estimating posterior expectations, SL uses identical parameters as the corresponding discrete MCMC sampler and maintains the same 10000 MCMC steps in total.
Furthermore, motivated by our theoretical results showing that posterior distributions become easier to sample from as SL iterations proceed, the experimental results in Section \ref{sec: main res} are obtained by exponentially decaying the allocation of 10000 MCMC steps across SL iterations. The impact of this allocation strategy is analyzed in Section \ref{sec: ablation}. Detailed experimental settings are provided in the Appendix \ref{app: exp setting}.

\subsection{Main Results}\label{sec: main res}
In this section, we evaluate the SL framework against three discrete MCMC samplers (GWG, PAS, DMALA) on maximum independent set (MIS), maximum cut (MaxCut), and maximum clique (MaxClique) problems. Following the setting in DISCS \citep{goshvadi2024discs}, for MIS, we directly report the objective values, while for MaxCut and MaxClique, we report the optimality gap ($\frac{\text{obj}}{\text{baseline}}\times100\%$) where baseline values are obtained from classic solvers collected by DISCS benchmark. The best solution found during the entire sampling process is used for evaluation. To assess sampling performance, we implement SL using each MCMC sampler for posterior estimation, enabling direct comparison:
$$\text{MCMC sampler} \textit{ vs. } \text{SL + MCMC sampler} $$
under \textit{identical} MCMC steps.

\begin{table}[ht!]
    \centering
   \caption{The percentage (\%) ($\uparrow$) of the solution provided by DISCS \citep{goshvadi2024discs} on Maxclique benchmarks. Bold values indicate better performance between paired methods.}
   \label{tab:maxclique_results}
\begin{tabular}{l|cc}
\toprule
& RB & TWITTER \\
\midrule
GWG & 87.509 $\pm$ 6.19 & 100.000 $\pm$ 0.00 \\
SL-GWG & \textbf{87.598 $\pm$ 6.16} & {100.000 $\pm$ 0.00} \\
\midrule
PAS & 87.544 $\pm$ 6.15 & 100.000 $\pm$ 0.00 \\
SL-PAS & \textbf{87.649 $\pm$ 6.14} & {100.000 $\pm$ 0.00} \\
\midrule
DMALA & 87.231 $\pm$ 6.14 & 100.000 $\pm$ 0.00 \\
SL-DMALA & \textbf{87.310 $\pm$ 6.15} & {100.000 $\pm$ 0.00} \\
\bottomrule
\end{tabular}
\end{table}

\begin{table*}[ht!]
   \centering
   \caption{The percentage (\%) ($\uparrow$) of the solution provided by DISCS \citep{goshvadi2024discs} on Maxcut benchmarks. Bold values indicate better performance between paired methods.}
   \label{tab:maxcut_results}
   \resizebox{\textwidth}{!}{
   \begin{tabular}{l|ccc|ccc|c}
\toprule
& \multicolumn{3}{c|}{ER} & \multicolumn{3}{c|}{BA} & OPTSICOM \\
& 256-300 & 512-600 & 1024-1100 & 256-300 & 512-600 & 1024-1100 & \\
\midrule
GWG & 101.881 $\pm$ 1.76 & 100.144 $\pm$ 0.12 & 100.098 $\pm$ 0.14 & 99.947 $\pm$ 0.06 & 100.850 $\pm$ 0.48 & 101.544 $\pm$ 0.38 & 100.000 $\pm$ 0.00 \\
SL-GWG & \textbf{101.924 $\pm$ 1.76} & \textbf{100.161 $\pm$ 0.12} & \textbf{100.101 $\pm$ 0.13} & \textbf{99.972 $\pm$ 0.06} & \textbf{100.889 $\pm$ 0.49} & \textbf{101.562 $\pm$ 0.37} & 100.000 $\pm$ 0.00 \\
\midrule
PAS & 101.884 $\pm$ 1.76 & 100.158 $\pm$ 0.12 & 100.120 $\pm$ 0.13 & 99.954 $\pm$ 0.05 & 100.883 $\pm$ 0.48 & 101.632 $\pm$ 0.37 & 100.000 $\pm$ 0.00 \\
SL-PAS & \textbf{101.920 $\pm$ 1.76} & \textbf{100.174 $\pm$ 0.12} & \textbf{100.123 $\pm$ 0.14} & \textbf{99.975 $\pm$ 0.05} & \textbf{100.928 $\pm$ 0.48} & \textbf{101.670 $\pm$ 0.37} & 100.000 $\pm$ 0.00 \\
\midrule
DMALA & 101.871 $\pm$ 1.76 & 100.130 $\pm$ 0.12 & 100.074 $\pm$ 0.14 & 99.946 $\pm$ 0.06 & 100.763 $\pm$ 0.50 & \textbf{101.243 $\pm$ 0.39} & 100.000 $\pm$ 0.00 \\
SL-DMALA & \textbf{101.922 $\pm$ 1.76} & \textbf{100.144 $\pm$ 0.12} & \textbf{100.078 $\pm$ 0.14} & \textbf{99.965 $\pm$ 0.06} & \textbf{100.792 $\pm$ 0.49} & {101.228 $\pm$ 0.38} & 100.000 $\pm$ 0.00 \\
\bottomrule
\end{tabular}}
\end{table*}
\begin{figure*}[ht]
\begin{center}
\centerline{\includegraphics[width=\textwidth]{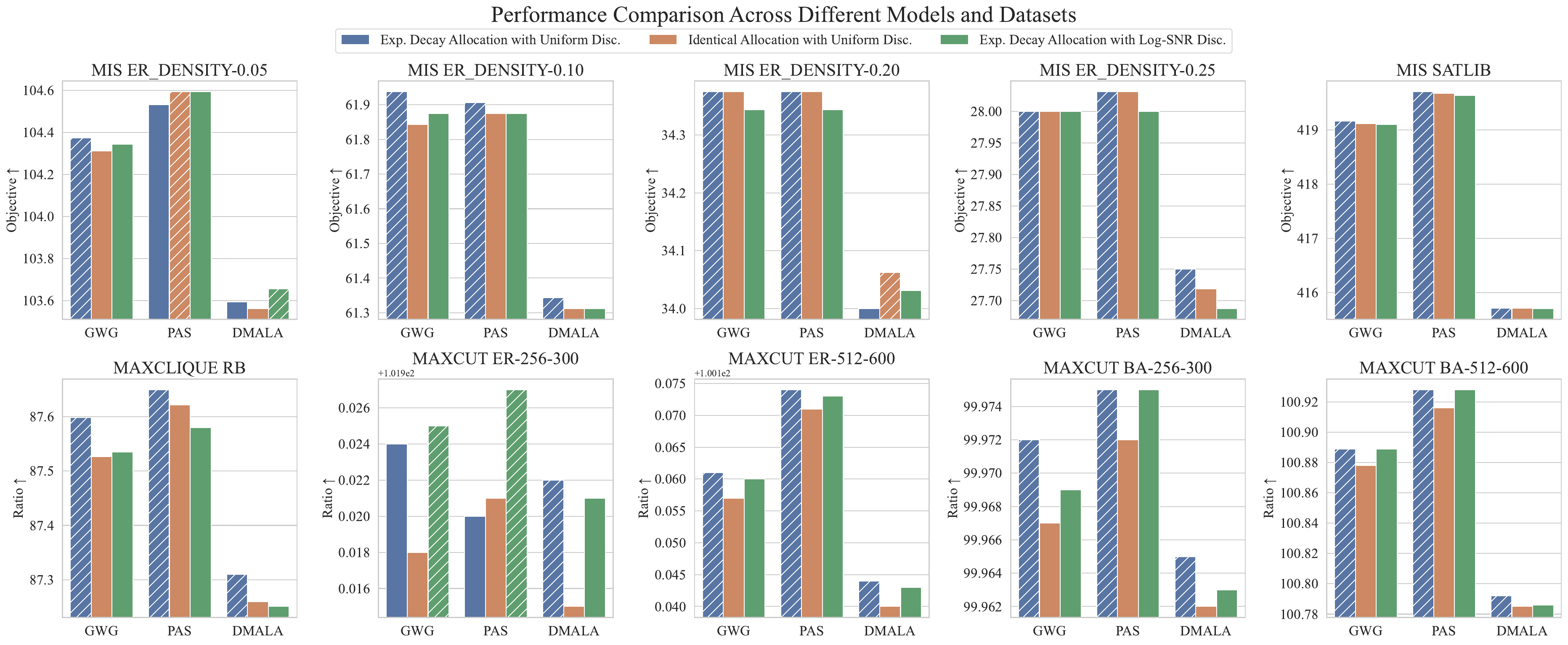}}
\caption{Ablation study comparing two design choices: (1) MCMC steps allocation strategies (Exponential Decay vs. Identical) and (2) SDE time discretization methods (Uniform vs. Log-SNR). Hatched bars indicate the best performing configuration for each algorithm-dataset combination.}
\label{fig: ablation}
\end{center}
\end{figure*}
\paragraph{Results on MIS} Table \ref{tab:mis_results} shows the performance comparison on MIS problems across five datasets, including four Erdős-Rényi random graphs with different densities (0.05-0.25) and instances from SATLIB dataset. For each MCMC sampler, our SL variant consistently achieves better or comparable objective values. The improvements are particularly noticeable on sparse graphs (R-0.05 and R-0.10), where SL-PAS outperforms baseline PAS by 0.15\% and 0.25\% respectively. Among all methods, SL-PAS demonstrates the strongest overall performance, achieving the highest objective values on both random graphs and SATLIB instance. 

\paragraph{Results on MaxClique} Table \ref{tab:maxclique_results} presents results on two MaxClique datasets: RB and TWITTER. On the RB dataset, all SL variants demonstrate consistent improvements over their MCMC counterparts, with SL-PAS achieving the highest optimality gap of 87.649\%. For the TWITTER dataset, all methods achieve optimal solutions (100\%), indicating its relative simplicity. The improvements on RB, while modest in magnitude, are consistent across all three MCMC samplers, further validating SL's effectiveness in enhancing sampling performance.

\paragraph{Results on MaxCut} Table \ref{tab:maxcut_results} presents results on seven MaxCut datasets, including Erdős-Rényi (ER) graphs, Barabási-Albert (BA) graphs with varying sizes (256-1024 nodes), and OPTSICOM instances. SL variants consistently outperform their MCMC counterparts across most datasets, with particularly notable improvements on larger graphs. SL-PAS achieves the best overall performance, showing improvements of up to 0.045\% on BA-512 and 0.038\% on BA-1024 compared to baseline PAS. 
On the OPTSICOM instance, all methods achieve optimal solutions.

\wcg{The experimental results demonstrate the consistent effectiveness of our SL framework across different combinatorial optimization problems, datasets, and MCMC samplers. It is important to note that the DMCMC baselines employed are themselves highly effective and often perform near optimality on these challenging tasks, significantly outperforming commercial solvers like Gurobi (as shown in Table \ref{tab:maxcut_results}). The improvements shown by SL are achieved even when starting from these strong established methods, highlighting its ability to further boost performance. For comprehensive evaluation results, including the sampling trajectory, we refer readers to Appendix \ref{app: exp res}.}

% The experimental results demonstrate the consistent effectiveness of our SL framework across different combinatorial optimization problems, datasets, and MCMC samplers. For comprehensive evaluation results, including the sampling trajectory, we refer readers to Appendix \ref{app: exp res}.

\subsection{Ablation Study}\label{sec: ablation}
In this part, we conduct ablation studies on two aspects: (1) MCMC steps allocation strategies (Exponential Decay vs. Identical) and (2) SDE time discretization methods (Uniform vs. Log-SNR), to validate our physical intuition proposed in Section \ref{sec: BQD sampling} and theoretical insights about sampling difficulty evolution proved in Section \ref{sec: theoretical res.}.

\paragraph{Time Discretization Strategy} We compare two strategies for discretizing the SDE integration interval: uniform discretization (our default choice) and Log-SNR discretization proposed in SLIPS \citep{DBLP:conf/icml/GreniouxNGD24}. Under uniform discretization, time points are evenly spaced across the interval, while Log-SNR discretization allocates more SDE iterations to smaller time points where posterior distributions are theoretically more challenging to sample from. 
\paragraph{Impact of MCMC Step Allocation} Our theory suggests that sampling becomes easier during SL iterations. To leverage this insight, we compare two strategies for allocating MCMC steps across SL iterations: exponential decay (our default choice) and uniform allocation. In exponential decay, earlier iterations receive more MCMC steps for posterior estimation, while uniform allocation distributes steps equally.

As shown in Figure \ref{fig: ablation}, Exp. Decay Allocation with Uniform Disc. achieves superior performance in most test cases, which aligns well with our physical intuition and theoretical analysis. However, we observe several interesting exceptions where alternative strategies perform marginally better, such as in MAXCUT ER-256-300 with PAS algorithm where Exp. Decay Allocation with Log-SNR Disc. yields the best result, and in some MIS SATLIB cases where Identical Allocation shows comparable performance. Despite these minor variations, the performance differences among the three strategies are generally modest (typically less than 1\% in objective values or optimality gap), demonstrating the remarkable stability of SL sampling framework across different configurations.

For more comprehensive ablation studies, including the impact of $\alpha$-schedule in SL and the influence of hyperparameters on sampling performance, we refer readers to Appendix \ref{app: ablation}, where we provide detailed experimental results and analyses.

\section{Conclusion and Discussion}
In this work, we have established the first theoretical framework for applying stochastic localization (SL) to binary quadratic distributions (BQDs), proving that the posterior distributions constructed by SL satisfy Poincaré inequalities with high probability after a certain number of iterations. Our theoretical guarantees are particularly general, encompassing both Glauber dynamics and Metropolis-Hastings based discrete MCMC samplers without restrictive assumptions on the underlying distributions. Extensive experiments on QUBO problems demonstrate that SL consistently improves the sampling efficiency of various discrete MCMC samplers, providing strong empirical support for our theoretical results.

These results also open several directions for future research. First, extending our framework beyond BQDs to distributions with unknown forms (specifically, deep energy-based models) remains challenging, potentially requiring efficient second-order Taylor approximations.
\wcg{Second, while our current theoretical analysis provides mixing guarantees for the posterior sampling step within SL, establishing rigorous theoretical guarantees for the convergence rate of the overall SL process to the final target distribution is a crucial and challenging problem that warrants future investigation.
Third, generalizing our framework to handle discrete variables with more than two states, moving beyond the binary case, is an important direction. }

% Second, while we establish polynomial mixing guarantees, quantifying the precise impact of SL on posterior sampling mixing times requires further theoretical development.
\section*{Acknowledgments}
This work was supported by the National Key R\&D Program of China under grant 2022YFA1003900, the National Natural Science Foundation of China (No.12401666, 12326611, 12426303), the Guangdong Provincial Key Laboratory of Mathematical Foundations for Artificial Intelligence (2023B1212010001) and the Fundamental Research Funds for the Central Universities,
Nankai University (No.054-63241437).

\section*{Impact Statement}
This work has significant implications for both theoretical research and practical applications in discrete optimization and probabilistic inference. Our theoretical guarantees for SL in discrete domains open new avenues for developing more efficient sampling algorithms. The demonstrated improvements in sampling efficiency across various discrete MCMC samplers suggest potential applications in diverse fields, including statistical physics, combinatorial optimization, and machine learning. As discrete optimization problems continue to arise in emerging technologies such as quantum computing and molecular design, the practical impact of our theoretical advances is expected to grow significantly.

\nocite{langley00}

\bibliography{example_paper}
\bibliographystyle{icml2025}

%%%%%%%%%%%%%%%%%%%%%%%%%%%%%%%%%%%%%%%%%%%%%%%%%%%%%%%%%%%%%%%%%%%%%%%%%%%%%%%
%%%%%%%%%%%%%%%%%%%%%%%%%%%%%%%%%%%%%%%%%%%%%%%%%%%%%%%%%%%%%%%%%%%%%%%%%%%%%%%
% APPENDIX
%%%%%%%%%%%%%%%%%%%%%%%%%%%%%%%%%%%%%%%%%%%%%%%%%%%%%%%%%%%%%%%%%%%%%%%%%%%%%%%
%%%%%%%%%%%%%%%%%%%%%%%%%%%%%%%%%%%%%%%%%%%%%%%%%%%%%%%%%%%%%%%%%%%%%%%%%%%%%%%
\newpage
\appendix
\onecolumn
\section{Related Work}\label{app:related}
\paragraph{Discrete MCMC Samplers} The Gibbs sampling algorithm, which is equivalent to Glauber dynamics in statistical physics, has been widely used to study statistical physics models (\wcg{refs}) and train deep energy-based models \citep{dai2020learning,wang2019bert}. 
% In these applications, the sampler proceeds by sequentially updating each variable according to its conditional distribution given the current state of all other variables.
To improve sampling efficiency beyond traditional Gibbs sampling, recent works have focused on locally balanced proposals \citep{zanella2020informed} that leverage local information such as gradient-like quantities. These methods modify the proposal distribution by incorporating local structure of the target distribution, leading to more informed transitions. Notable examples include Gibbs With Gradients \citep{grathwohl2021oops}, which approximates probability ratios using well-defined gradient information in discrete space, Path Auxiliary Sampler \citep{sun2021path}, which constructs auxiliary paths to enable long-range transitions while maintaining detailed balance, and Metropolis-Adjusted Langevin Algorithm \citep{zhang2022langevin} also introduces the gradient information and updates all dimensions in parallel by factorizing the joint distribution. When second-order information of the target distribution is available, \cite{zhang2012continuous, rhodes2022enhanced, sun2023any} leverage the Gaussian integral trick \citep{hubbard1959calculation} to transform discrete sampling into continuous sampling, enabling the use of numerous well-developed continuous MCMC samplers.

\paragraph{Stochastic Localization} Stochastic localization (SL)~\cite{eldan2013thin,eldan2020taming,chen2021almost,eldan2022spectral,eldan2022log} is a recent method for proving properties of measures such as concentration inequalities and measure decompositions. \citet{chen2022localization} further developed it into a technique for studying mixing time of Markov chains. Additionally, it naturally produces a sampling framework which has attracted the research interest of many data scientists~\cite{el2022sampling,montanari2023sampling,DBLP:conf/icml/GreniouxNGD24,anari2024trickle}. 

The main challenge of sampling via SL lies on Bayes estimation of posterior expectation. For discrete targets, \citet{el2022sampling} first use SL sampling from the Sherrington-Kirkpatrick (SK) model, where posterior estimation is obtained by an approximate message passing (AMP) algorithm and a process for minimizing the Thouless-Anderson-Palmer (TAP) free energy via natural gradient descent (NGD). Furthermore, they extended this approach to mean-field models~\cite{alaoui2023sampling}, spiked models~\cite{montanari2023posterior} and spherical spin glasses~\cite{huang2024sampling}. For a certain
class of non log-concave continuous distributions,  SLIPS~\cite{DBLP:conf/icml/GreniouxNGD24} samples from $q_t$ via Metropolis-Adjusted Langevin Algorithm \citep{roberts1996exponential} to estimate posterior mean. In continuous case, log-concavity of distribution implies easier sampling \cite{dwivedi2019log}. SLIPS proves duality of log-concavity to show its efficiency. 
However, for binary distributions, quadratic term will collapse into constant ($x^2=1,\;x\in\{-1,1\}^{n}$) or linear term ($x^2=x,\;x\in\{0,1\}^{n}$), which poses difficulties for theoretical analysis.

\section{Background}
In this section we will outline some background of stochastic dynamics of algorithms and our theoretical techniques. 

\subsection{Fundamental Definitions of Stochastic Localization}\label{app: sl def}
A stochastic localization process $(\nu_t)_t$ is defined as
$$ \frac{d\nu_t}{d\nu}\left(x\right)=F_t\left(x\right), $$
where the functions $F_t$ solve following stochastic differential equations:
\begin{equation}\label{eq.sl1}
	F_0(x)=1,dF_t(x)=F_t(x)\langle x-m(\nu_t),C_tdB_t\rangle,\forall x,
\end{equation}
where $m(\nu):=\int x\nu(dx)$, $(C_t)_t$ is an adapted process which takes values in the space of $n \times n$ positive-definite matrices, $(B_t)_{t\ge 0}$ is a standard Brownian motion on $\mathbb{R}^{n}$. We can also use linear-tilt form describing stochastic localization process
$$ \frac{\nu_t(dx)}{\nu(dx)}=e^{Z_t-\frac{1}{2}\left\langle\Sigma_tx,x\right\rangle+\left\langle y_t,x\right\rangle}, $$
where $\Sigma_t=\int_0^tC_s^2ds$,  $y_t=\int_0^t\left(C_sdB_s+C_s^2 m(\nu_s)ds\right)$ and $Z_t$ is a normalizing constant. 

A key property is that for any measurable set $A\subset \Omega $, stochastic localization process $\nu_t(A)$ almost surely converges to either $0$ or $1$ as $t\rightarrow \infty$ and the concentrated point is
distributed according to the law $\nu$.
\begin{lemma}[Proposition 10 of \cite{eldan2022log}]\label{lem2.1}
	Let $a_t = m(\nu_t)$, the process $a_t$ almost surely converges to a point in $\Omega$, and $a_{\infty} := \lim_{t\rightarrow\infty} a_t$ is
	distributed according to the law $\nu$. Moreover, the measure $\nu_t$ almost-surely weakly converges to a Dirac measure at $a_{\infty}$.
\end{lemma}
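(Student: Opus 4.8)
The plan is to prove the three assertions of Lemma~\ref{lem2.1}—almost-sure convergence of $a_t$, identification of its limit law, and weak convergence of $\nu_t$—purely from the martingale structure of the defining SDE \eqref{eq.sl1}. First I would show that $a_t$ is a bounded martingale: writing $a_t=m(\nu_t)=\int x\,F_t(x)\,\nu(dx)$ and differentiating under the integral using \eqref{eq.sl1},
\begin{align*}
\mathrm{d}a_t=\int x\,\mathrm{d}F_t(x)\,\nu(dx)=\Big(\int x\,(x-a_t)^{\top}F_t(x)\,\nu(dx)\Big)C_t\,\mathrm{d}B_t=\mathrm{Cov}_{\nu_t}(X)\,C_t\,\mathrm{d}B_t .
\end{align*}
This has no drift, so $a_t$ is a local martingale; since it takes values in the convex hull $[-1,1]^N$ of $\Omega$ it is bounded, so the martingale convergence theorem yields $a_t\to a_\infty$ almost surely.

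Next I would show the limit lies in $\Omega$. Because each coordinate satisfies $X_i^{2}=1$ under $\nu_t$, the diagonal of the posterior covariance is $\mathrm{Var}_{\nu_t}(X_i)=1-(a_{t,i})^{2}$. The quadratic variation obtained from the previous display is $\mathrm{d}\langle a\rangle_t=\mathrm{Cov}_{\nu_t}(X)\,C_t^{2}\,\mathrm{Cov}_{\nu_t}(X)\,\mathrm{d}t$, and boundedness of $a_t$ makes its total quadratic variation finite. Combining this finiteness with the divergence of the localization strength $\Sigma_t=\int_0^t C_s^2\,\mathrm{d}s$ (e.g.\ the standard choice $C_t\equiv I$) forces $\mathrm{Cov}_{\nu_t}(X)\to 0$; in particular $(a_{t,i})^{2}\to 1$, so $a_\infty\in\{-1,1\}^N=\Omega$.

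For the limit law and the weak convergence, I would, for a bounded test function $\phi$, introduce $M_t^{\phi}=\mathbb{E}_{\nu_t}[\phi(X)]=\int\phi(x)F_t(x)\,\nu(dx)$, which by the same computation is a bounded martingale with $M_0^{\phi}=\mathbb{E}_\nu[\phi(X)]$. The covariance collapse of the second step shows $\nu_t$ concentrates at $a_\infty$, so $M_\infty^{\phi}=\phi(a_\infty)$; the martingale identity then gives $\mathbb{E}[\phi(a_\infty)]=\mathbb{E}_\nu[\phi(X)]$, i.e.\ $a_\infty\sim\nu$. That same concentration of mass is precisely the weak convergence $\nu_t\Rightarrow\delta_{a_\infty}$.

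The main obstacle is the second step: converting ``finite quadratic variation together with $\Sigma_t\to\infty$'' into the genuine vanishing of $\mathrm{Cov}_{\nu_t}(X)$, which is what pins the limit onto the vertices of the cube and simultaneously underlies both the localization claim and the correct limiting law. This needs care—e.g.\ deriving $\int_0^\infty \mathrm{Var}_{\nu_t}(X_i)^2\,\mathrm{d}t<\infty$ and ruling out persistent oscillation of the variances—and is exactly the delicate content attributed to \citet{eldan2022log}, so in the paper itself the statement is invoked as a citation rather than reproved from scratch.
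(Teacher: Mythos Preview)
You have correctly identified the situation: the paper does not prove this lemma at all, but simply cites it as Proposition~10 of \cite{eldan2022log}. Your sketch is the standard martingale-based argument for this type of result, and your final paragraph accurately diagnoses both the technical crux (converting bounded quadratic variation plus $\Sigma_t\to\infty$ into genuine covariance collapse) and the paper's handling of it (citation rather than proof).
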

Lemma \ref{lem2.1} suggest samples $a_{\infty}\sim \nu$ can be obtained by a stochastic localization process.

\subsection{Generator and Dirichlet Form of Glauber Dynamics and Metropolis-Hastings Algorithms}
Consider $(E,d)$ is a polish space equipped with the Borel field $\mathcal{B}$, and the Gibbs measures $\nu$ on spin space $E^T$.
\paragraph{Glauber Dynamics} is a Markov process of pure jumps. If the configuration at present is $x$, then at each site $i$, it will change to $x^i$ according to the conditional distribution $\nu(x_i\mid x_{\sim i})$. The generator of Glauber dynamics is 
$$\mathcal{L}_{\text{GD}}f(x)=\sum_{i\in T}\left(\mathbb{E}_{\nu}[f(X)\mid X_{\sim i}=x_{\sim i}]-f(x)\right).$$
And the associated Dirichlet form of Glauber dynamics is 
\begin{equation}\label{eq:gd_df}
    \mathcal{E}_{\text{GD}}(f,f)=\mathbb{E}_\nu\sum_{i\in T}(\mathbb{E}_\nu[f(X)\mid X_{\sim i}]-f(X))^2.
\end{equation}

\paragraph{Metropolis-Hastings Algorithms} is a reversible Markov chain with transition function:
\begin{align*}
P(y\mid x)=\Psi(y\mid x)\min\{1,\frac{\nu(y)\Psi(x\mid y)}{\nu(x)\Psi(y\mid x)}\},\ \text{if} \ x\neq y,
\end{align*}
and 
\begin{align*}
	P(y\mid x)=1-\sum_{x\neq y}\Psi(y\mid x)\min\{1,\frac{\nu(y)\Psi(x\mid y)}{\nu(x)\Psi(y\mid x)}\},\ \text{if} \ x= y,
\end{align*}
where $\Psi$ is the proposal distribution. The generator of Metropolis-Hastings algorithms is
$$\mathcal{L}_{\text{MH}} f(x)=\int_{E^T}P(y\mid x)(f(y)-f(x)).$$
And the associated Dirichlet form of Metropolis-Hastings algorithms is
\begin{equation}\label{eq:mh_df}
    \mathcal{E}_{\text{MH}}(f,f)=\frac{1}{2}\int_{E^T\times E^T}\nu(\mathrm{d}x)P(\mathrm{d}y\mid x)\left(f(x)-f(y)\right)^2.
\end{equation}
% \subsection{Wasserstein distance and Dobrushin's interdependence coefficientd.}

% Denote by $\mathcal{P}_{d,p}(E)$ the space of probability measures $\nu$ on $(E,\mathcal{B})$ with finite $p$-moment, which means
% \begin{align*}
% \int_{E}d(x,x_{0})^{p}\nu(dx)<+\infty
% \end{align*}
\subsection{Dobrushin Interdependence Matrix}

%\zwc{$\mathcal{W}_{1,d}(\nu_{1},\nu_{2})$}
Interdependence matrix serves as a fundamental tool in our analysis, providing a rigorous way to characterize dependency relationships among random variables. This concept, deeply rooted in probability theory and statistical physics, allows us to quantify the strength of interactions between variables. We now present its formal definition.

\paragraph{$L^p$-Wasserstein Distance} Let $(\Omega,d)$ be a Polish space equipped with the Borel field $\mathcal{B}$. For $p \geq 1$, we denote by $\mathcal{P}{d,p}(\Omega)$ the space of probability measures on $(\Omega,\mathcal{B})$ with finite $p$-th moment. For any $\nu_1, \nu_2 \in \mathcal{P}{d,p}(\Omega)$, the \emph{$L^p$-Wasserstein distance} is defined as
\begin{equation}\label{eq:wasserstein}
    \mathcal{W}_{p,d}(\nu_1,\nu_2):=\inf_{\pi}\left(\int\int_{\Omega\times \Omega}d(x,y)^p\pi(dx,dy)\right)^{1/p},
\end{equation}
where the infimum is taken over all probability measures $\pi$ on $\Omega\times \Omega$ with marginals $\nu_1$ and $\nu_2$ respectively. In the special case where $d(x,y)=\textbf{1}_{x\neq y}$, we have
\begin{align*}
\mathcal{W}_{1,d}(\nu_1,\nu_2)=\sup_{A\in\mathcal{B}}|\nu_1(A)-\nu_2(A)|=\frac{1}{2}\Vert \nu_1-\nu_2\Vert_{TV}.
\end{align*}

%\zwc{$d$-Dobrushin interdependence matrix}
\paragraph{$d$-Dobrushin Interdependence Matrix} For $i \in [N]$, let $\nu_{i}(\cdot|x)$ denote the conditional distribution of $x_{i}$ given $x_{\sim i}$. The \emph{$d$-Dobrushin interdependence matrix} $C:=(c_{ij})_{i,j\in [N]}$ is defined by
\begin{equation}\label{eq:inter}
c_{ij}:=\sup_{x=y \ \text{off}\ j}\frac{\mathcal{W}_{1,d}(\nu_{i}(\cdot\mid x),\nu_{i}(\cdot\mid y))}{d(x_{j},y_{j})}, \quad i,j\in [N],
\end{equation}
where each entry $c_{ij}$ quantifies the influence of site $j$ on site $i$. The classical Dobrushin uniqueness condition \cite{dobruschin1968description,dobrushin1970prescribing},
$\sup_i\sum_jc_{ij}<1,$
provides a sufficient condition for the uniqueness of Gibbs measures in spin glasses.
For a more general generator on $\{-1,1\}^{N}$ given by
\begin{equation}\label{eq:generator}
\mathcal{L}f(x)=\sum_{S\subset [N]}\int_{{-1,1}^{S}}J_{S}(x,dz_{S})(f(x^{S})-f(x)),
\end{equation}
where $J_{S}$ is a bounded nonnegative kernel representing the local jump rate, the Dobrushin interdependence matrix takes the form
\begin{equation}\label{eq:inter_matrix}
c_{ij}:=\sum_{i\in S} c_{S}(j).
\end{equation}
Here, $c_{S}(j)\geq 0$ is the optimal constant satisfying
\begin{align*}
\sup_{x=y\mathrm{off}j}&\frac{1}{d(x_j,y_j)}|\int_{E^S}g(z_S)(J_S(x,dz_S)-J_S(y,dz_S))|\\
&\leq c_S(j)\sum_{i\in S}\delta_i(g)
\end{align*}
for all Lipschitz continuous functions $g$, where $\delta_i(g):=\sup_{x=y\text{ off }i}\frac{|g(y)-g(x)|}{d(y_i,x_i)}$. When $d$ is the trivial metric, we have
\begin{align*}
c_{S}(j)\leq \frac{1}{2}\sup_{x=y \ \rm off\ j}\Vert J_{S}(x,\cdot)-J_{S}(y,\cdot)\Vert_{TV}.
\end{align*}

\subsection{Poincaré Inequalities for Gibbs Measures under the Dobrushin Uniqueness Condition}
In this subsection, we present the mathematical techniques employed in our study. For stochastic dynamics with a given generator, \citet{wu2006poincare} derives a sharp estimate of the spectral gap (or Poincar\'e constant) through the spectral radius analysis of the Dobrushin interdependence matrix.

Consider the generator
$$\mathcal{L}f:=\sum_{i\in T}[\nu_i(f)-f],$$
where $\nu_i:=\nu_i(dx_i|x)$ is the local specification of Gibbs measure $\nu$ on $E^T$, that is, for each $i\in T$ the conditional distribution of $x_i$ knowing $x_T\backslash\{i\}$ coincides with the given $\nu_i(\cdot|x)$. It generates a Glauber dynamics. Then we have following Poincar\'e inequality for Glauber dynamics.

\begin{lemma}[Theorem 2.1 of \cite{wu2006poincare}]\label{lemmaA1}
Let $r_{\text{sp}}(C)$ be the spectral radius of the Dobrushin interdependence matrix $C=(c_{ij})_{i,j\in T}$ defined in \eqref{eq:inter} (which is an eigenvalue of C by the Perron–Frobenius theorem).
If $r_{\mathrm{sp}}(C)<1$, then
$$\left(1-r_{\mathrm{sp}}(C)\right)\nu(f,f)\leq\mathbb{E}_{\nu}\sum_{i\in T}\nu_{i}(f,f)\quad\forall f\in L^{2}(E^{T},\nu),$$
where $\nu(f,g)$ denotes the covariance of $f$, $g$ under $\nu$, and $\nu_i(f,g)=\nu_i(fg)-\nu_i(f)\nu_i(g)$ is the conditional covariance of $f$, $g$ under $\nu_i$ with $x_{T\setminus \{i\}}$ fixed.
\end{lemma}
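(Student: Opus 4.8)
The plan is to establish the stated inequality as a lower bound on the spectral gap of the Glauber generator $\mathcal{L}f=\sum_{i\in T}(\nu_i(f)-f)$, exploiting that each single-site conditional expectation $P_i f:=\nu_i(f)=\mathbb{E}_\nu[f\mid x_{\sim i}]$ is an orthogonal projection on $L^2(E^T,\nu)$. Writing $Q_i=I-P_i$, the right-hand side is exactly $\sum_i\langle Q_i f,f\rangle_\nu=\sum_i\|Q_i f\|_{L^2(\nu)}^2=\mathbb{E}_\nu\sum_i\nu_i(f,f)$, so after subtracting the mean (WLOG $\nu(f)=0$) the target collapses to the operator lower bound $\sum_i\|Q_i f\|^2\ge(1-r_{\mathrm{sp}}(C))\,\nu(f,f)$. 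For a product measure the $Q_i$ are mutually orthogonal and this already holds with constant $1$ (Efron--Stein); the entire content of the lemma is to control the non-commutativity of the $P_i$ induced by the interactions, which is precisely what the Dobrushin matrix $C$ quantifies.

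First I would decompose the variance along a fixed ordering $1,\dots,N$ of the sites into martingale differences $d_k=E_k f-E_{k-1}f$, where $E_k=\mathbb{E}_\nu[\,\cdot\mid x_1,\dots,x_k]$, so that orthogonality gives $\nu(f,f)=\sum_k\|d_k\|^2$. Each $d_k$ records the fluctuation created by resampling coordinate $k$ inside the prefix-conditioned measure, and the next step is to bound $\|d_k\|$ by $\|Q_k f\|$ plus error terms from the remaining coordinates. The Dobrushin coefficients enter here: resampling site $j$ perturbs the conditional law at site $i$ by at most $c_{ij}$ in the $\mathcal{W}_{1,d}$ (equivalently TV) distance defining $C$ in \eqref{eq:inter}, so a single-site perturbation propagates through the interaction graph with weights given by powers of $C$. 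Assembling these estimates yields a componentwise vector inequality of the form $\mathbf{a}\le\mathbf{b}+C^\top\mathbf{a}$, with entries built from $\|d_i\|$ and $\|Q_i f\|$ respectively.

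Because $r_{\mathrm{sp}}(C)<1$, the matrix $I-C^\top$ is invertible with nonnegative Neumann series $\sum_{k\ge0}(C^\top)^k$, giving $\mathbf{a}\le(I-C^\top)^{-1}\mathbf{b}$. To replace the crude row-sum bound $\sup_i\sum_j c_{ij}$ by the sharp spectral radius $r_{\mathrm{sp}}(C)$, I would pair everything against the Perron--Frobenius eigenvector $v>0$ of $C^\top$ with $C^\top v=r_{\mathrm{sp}}(C)v$ (note $r_{\mathrm{sp}}(C)=r_{\mathrm{sp}}(C^\top)$): in the $v$-weighted inner product the action of $C^\top$ becomes multiplication by $r_{\mathrm{sp}}(C)$, so after a Cauchy--Schwarz step the propagation series collapses to the geometric factor $\sum_{k\ge0}r_{\mathrm{sp}}(C)^k=1/(1-r_{\mathrm{sp}}(C))$. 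Summing over $k$ then yields $\nu(f,f)=\sum_k\|d_k\|^2\le\frac{1}{1-r_{\mathrm{sp}}(C)}\sum_k\|Q_k f\|^2=\frac{1}{1-r_{\mathrm{sp}}(C)}\,\mathbb{E}_\nu\sum_i\nu_i(f,f)$, which is the claim.

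The main obstacle is the middle step: rigorously translating the analytic martingale-difference norms $\|d_k\|$ into the combinatorial coefficients $c_{ij}$, which are defined through Wasserstein distances of conditional laws rather than $L^2$ quantities. This requires constructing an explicit coupling of the prefix-conditioned measures and tracking how a one-site discrepancy spreads, so that the cross terms $\langle Q_i f,Q_j f\rangle$ for $i\ne j$ are dominated by $c_{ij}$-weighted products and the recursion $\mathbf{a}\le\mathbf{b}+C^\top\mathbf{a}$ genuinely closes. Once this off-diagonal domination is secured, the Perron--Frobenius and Neumann-series bookkeeping is routine.
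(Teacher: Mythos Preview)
The paper does not supply its own proof of this lemma: it is quoted as Theorem~2.1 of \cite{wu2006poincare} and invoked purely as a black-box input in the proofs of Theorems~\ref{thm:4.2}, \ref{thm:classmh}, \ref{thm:abs}, \ref{thm:1site} and \ref{thm:product}. There is therefore nothing in the paper against which to compare your proposal.

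On the proposal itself: the high-level architecture is right---the content is indeed an off-diagonal domination of the $Q_i=I-P_i$ by the Dobrushin matrix followed by Perron--Frobenius/Neumann-series bookkeeping---and you have correctly identified the one genuine difficulty, namely converting the Wasserstein-defined coefficients $c_{ij}$ into $L^2$ control. As written, however, that middle step is only gestured at, and the specific filtration you chose makes it harder than necessary: conditioning on a growing prefix $x_1,\dots,x_k$ means each $d_k$ depends on resampling \emph{all} of $x_{k+1},\dots,x_N$ under the prefix-conditioned law, so the recursion $\mathbf a\le\mathbf b+C^\top\mathbf a$ does not obviously close for the pair $(\|d_k\|,\|Q_kf\|)$ you wrote down. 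The arguments in the literature (Wu's own, and the earlier Stroock--Zegarlinski approach it sharpens) instead track the pointwise oscillation vector $(\delta_i(f))_i$ and establish a semigroup commutation estimate of the form $\delta_i(P_tf)\le\sum_j(e^{-t(I-C)})_{ij}\,\delta_j(f)$; integrating in $t$ then yields the spectral-radius constant directly, without ever passing through an $L^2$-to-Wasserstein translation or a site-ordered martingale decomposition.
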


For more general generator \eqref{eq:generator}
\begin{align*}
\mathcal{L}f(x)=\sum_{S\subset N}\int_{E^{S}}J_{S}(x,dz_{S})(f(x^{S})-f(x)),
\end{align*}
Assume that for each $S \subset T$ and for every $j \in T $, there is some finite optimal constant $C_S(j)\geq0$,
$$\sup_{x=y\mathrm{~off~}j}\frac{1}{d(x_j,y_j)}|\int_{E^S}g(z_S)\left(J_S(x,dz_S)\right)-J_S(y,dz_S)|\leq c_S(j)\sum_{i\in S}\delta_i(g)$$
for all Lipschitz continuous function $g$ on $E^T$. And 
$$\delta_i(f):=\sup_{x=y\text{ off }i}\frac{|f(y)-f(x)|}{d(y_i,x_i)}.$$
\begin{lemma}[Theorem 3.1 and Corollary 3.5 of \cite{wu2006poincare}]\label{lemmaA2}
    Let $$\eta:=\inf_{x\in E^T}\inf_{i\in T}\sum_{S\ni i}J_S(x,E^S).$$
    For Dobrushin interdependence matrix $C$ defined in \eqref{eq:inter_matrix}, assume $r_{\mathrm{sp}}(C)<1$. Then Markov semigroup $P_t=e^{t\mathcal{L}}$ has a unique invariant measure $\nu$ such that $\int_{E^T}d(x,y)\nu(dy)<+\infty$ for every $x$. Moreover, for each $x\in E^{T}$,
    \begin{align*}\mathcal{W}_{1,d}\left(P_t(x,\cdot),\nu\right)&\leq e^{-\eta t}\max_j\sum_i(e^{tC})_{ij}\int_{E^T}d(x,y)\nu(dy)\leq e^{-t(\eta-\|C\|_1)}\int_{E^T}d(x,y)\nu(dy).\end{align*}
\end{lemma}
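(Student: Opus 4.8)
The plan is to prove Lemma~\ref{lemmaA2} by promoting the \emph{static} Dobrushin interdependence estimate into a \emph{dynamic} $L^{1}$-Wasserstein contraction for the semigroup $P_t=e^{t\mathcal{L}}$, and then passing to equilibrium. Throughout I use the additive product metric $d(x,y)=\sum_i d(x_i,y_i)$ built from the coordinate metric appearing in the interdependence constants. There are two equivalent routes. The analytic one uses Kantorovich--Rubinstein duality, $\mathcal{W}_{1,d}(P_t(x,\cdot),P_t(y,\cdot))=\sup_{\max_i\delta_i(f)\le 1}|P_tf(x)-P_tf(y)|$, and tracks the oscillation vector $\vec\delta(P_tf)=(\delta_i(P_tf))_i$ as $f$ is propagated by the flow. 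The probabilistic one constructs a Markovian coupling $(X_t,Y_t)$ of two copies of the process started at $x$ and $y$ and tracks the disagreement profile $\rho_i(t):=\mathbb{E}[d(X_t^{(i)},Y_t^{(i)})]$. I would follow the coupling route, as it makes the role of $\eta$ and $C$ most transparent.

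First I would derive the governing differential inequality. Writing $\mathcal{L}f(x)-\mathcal{L}f(y)$ for configurations $x=y$ off $j$ and splitting the block sum in \eqref{eq:generator} according to whether $j\in S$, the blocks $S\ni j$ supply updates that can erase the disagreement at $j$, contracting at rate at least $\eta=\inf_x\inf_i\sum_{S\ni i}J_S(x,E^S)$; the mismatch of the kernels $J_S(x,\cdot)-J_S(y,\cdot)$ for the remaining blocks is exactly what the constants $c_S(j)$, and hence the interdependence entries $c_{ij}=\sum_{i\in S}c_S(j)$ of \eqref{eq:inter_matrix}, are built to bound. Encoding both effects in the coupling yields, componentwise,
\begin{align*}
\frac{d}{dt}\rho_i(t)\le -\eta\,\rho_i(t)+\sum_{j}c_{ij}\,\rho_j(t),
\end{align*}
i.e.\ $\dot{\vec\rho}\le(C-\eta I)\vec\rho$. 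Since $C\ge 0$ has nonnegative off-diagonal part, $e^{t(C-\eta I)}$ is entrywise nonnegative, so the comparison principle for linear ODEs gives $\vec\rho(t)\le e^{-\eta t}e^{tC}\vec\rho(0)$ with $\rho_j(0)=d(x_j,y_j)$.

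Summing over sites and using that the coupling furnishes an admissible transport plan, $\mathcal{W}_{1,d}(P_t(x,\cdot),P_t(y,\cdot))\le\mathbb{E}[d(X_t,Y_t)]=\sum_i\rho_i(t)$, I obtain
\begin{align*}
\mathcal{W}_{1,d}(P_t(x,\cdot),P_t(y,\cdot))\le e^{-\eta t}\Big(\max_j\sum_i(e^{tC})_{ij}\Big)\,d(x,y).
\end{align*}
In the relevant regime (where $\eta$ dominates the growth rate $r_{\mathrm{sp}}(C)$ of $e^{tC}$, e.g.\ $\eta=1>r_{\mathrm{sp}}(C)$ for single-site dynamics) the prefactor decays, so $\{P_t(x,\cdot)\}_t$ is Cauchy in the complete Wasserstein space and $P_t$ is a strict contraction for large $t$; the Banach fixed-point theorem then yields a unique invariant $\nu$ with $\nu P_t=\nu$. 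Integrating the two-point bound against $y\sim\nu$ and using invariance together with joint convexity of $\mathcal{W}_{1,d}$, $\mathcal{W}_{1,d}(P_t(x,\cdot),\nu)=\mathcal{W}_{1,d}(P_t(x,\cdot),\nu P_t)\le\int\mathcal{W}_{1,d}(P_t(x,\cdot),P_t(y,\cdot))\,\nu(dy)$, gives the sharp estimate; the crude bound follows from $\max_j\sum_i(e^{tC})_{ij}=\|e^{tC}\|_{1}\le e^{t\|C\|_{1}}$ by submultiplicativity of the operator $1$-norm.

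The hard part will be the first step: rigorously constructing the Markovian coupling for the general block-update generator \eqref{eq:generator} and verifying that the rate at which disagreements \emph{propagate} from site $j$ to site $i$ is captured precisely by $c_{ij}$. This demands translating the total-variation/Wasserstein kernel differences that define $c_S(j)$ into coupled jump rates, and cleanly separating the disagreement-erasing contribution (bounded below by $\eta$) from the disagreement-creating contribution. Once this bookkeeping is secured, the ODE comparison, the duality/coupling passage to $\mathcal{W}_{1,d}$, and the equilibrium step are all routine.
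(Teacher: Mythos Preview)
The paper does not supply its own proof of this lemma; it is quoted as Theorem~3.1 and Corollary~3.5 of Wu~(2006) and recorded purely as background. In fact, the proofs in Appendix~C all invoke Lemma~\ref{lemmaA1} (the Poincar\'e inequality version) rather than Lemma~\ref{lemmaA2}, so there is nothing in this paper to compare your argument against directly.

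That said, your coupling route is the standard one and matches what Wu's original proof does: derive the componentwise differential inequality $\dot{\vec\rho}\le(C-\eta I)\vec\rho$ for the expected disagreement profile, integrate via the ODE comparison principle using nonnegativity of $e^{tC}$, and then pass to equilibrium by integrating the two-point contraction against $\nu$. The identification of the ``hard part'' is accurate. One minor point worth flagging: the lemma as stated in the paper assumes only $r_{\mathrm{sp}}(C)<1$, but your fixed-point argument for the existence of $\nu$ requires the prefactor $e^{-\eta t}\max_j\sum_i(e^{tC})_{ij}$ to actually decay, i.e.\ $\eta>r_{\mathrm{sp}}(C)$. You acknowledge this by restricting to ``the relevant regime''; in every application the paper makes one has $\eta=1>r_{\mathrm{sp}}(C)$, so the issue is moot there, but a proof of the lemma exactly as transcribed would need to secure existence of $\nu$ by a separate argument (compactness when $E$ is finite, or Wu's original hypotheses) in the borderline case.
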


\section{Proofs for Main Theorems}\label{app: proofs}

\paragraph{General Proof Sketch.} Our proof of the Poincaré inequality follows three key steps. 
\begin{itemize}
    \item First, we construct the $d$-Dobrushin interdependence matrix $C$ from the algorithm's transition kernel;
    \item We then establish upper bounds on the matrix entries $c_{ij}$, which leads to a bound on the spectral radius of $C$. 
    \item Finally, we leverage this spectral bound to establish the Poincaré inequality.
\end{itemize} 
\subsection{Proof for Theorem \ref{thm:3.1}}
 \begin{proof}
%According to the following property of Brownian motion,
%\begin{align*}
%\mathbb{P}(\limsup_{t\rightarrow+\infty}\frac{B_{t}}{\sqrt{2t\log\log t}}=+1)=1,\\
%\mathbb{P}(\liminf_{t\rightarrow+\infty}\frac{B_{t}}{\sqrt{2t\log\log t}}=-1)=1,
%\end{align*} 
%we can get for $\alpha(t)=O(t^{\frac{\vartheta}{2}}),\vartheta>1$ 
% \begin{align*}
%\mathbb{P}(\lim_{t\rightarrow+\infty}\left|\frac{Y_{t}}{\alpha(t)}-X\right|=0)=\mathbb{P}(\lim_{t\rightarrow+\infty}\left|\frac{\sigma B_{t}}{\alpha(t)}\right|=0)=1.
%\end{align*}
%Because
%\begin{align*}
%	\{\omega:\lim_{t\rightarrow+\infty}\left|\frac{Y_{t}}{\alpha(t)}-X\right|=0\}=	\{\omega:\left|\lim_{t\rightarrow+\infty}\frac{Y_{t}}{\alpha(t)}-X\right|=0\}\subset	\{\omega:\lim_{t\rightarrow+\infty}\left|\frac{Y_{t}}{\alpha(t)}\right|=1\},
%\end{align*}
%then
%\begin{align*}
%\mathbb{P}(\lim_{t\rightarrow+\infty}\left|\frac{Y_{t}}{\alpha(t)}\right|=1)\geq P(\lim_{t\rightarrow+\infty}\left|\frac{Y_{t}}{\alpha(t)}-X\right|=0)=1.
%\end{align*}
Define the Gaussian integration
\begin{align*}
\Phi(x)=\frac{1}{\sqrt{2\pi}}\int_{-\infty}^{x}e^{-\frac{y^{2}}{2}}dy.
\end{align*}
By definition
\begin{align*}
\mathbb{P}(|Y_{t}|\geq \zeta)=&\mathbb{P}(|\alpha(t)X+\sigma B_{t}|\geq \zeta)=\mathbb{P}(|B_{t}\pm \frac{\alpha(t)}{\sigma}|\geq \frac{\zeta}{\sigma})\\
=&1-\left(\Phi\left(\frac{\zeta}{\sigma\sqrt{t}}\pm\frac{\alpha(t)}{\sigma\sqrt{t}}\right)-\Phi\left(-\frac{\zeta}{\sigma\sqrt{t}}\pm\frac{\alpha(t)}{\sigma\sqrt{t}}\right)\right).
\end{align*}
Obviously,   
\begin{align*}
0\leq& \Phi\left(\frac{\zeta}{\sigma\sqrt{t}}\pm\frac{\alpha(t)}{\sigma\sqrt{t}}\right)-\Phi\left(-\frac{\zeta}{\sigma\sqrt{t}}\pm\frac{\alpha(t)}{\sigma\sqrt{t}}\right)\leq\Phi\left(\frac{\zeta}{\sigma\sqrt{t}}-\frac{\alpha(t)}{\sigma\sqrt{t}}\right)
\leq\Phi\left(\frac{\zeta}{\sigma}-\frac{\alpha(t)}{\sigma\sqrt{t}}\right).
\end{align*}
Because $\frac{\alpha(t)}{\sigma\sqrt{t}}\rightarrow+\infty$, 
there is  a $T$ large enough, such that for $t\geq T$, we have
\begin{align*}
0\leq& \Phi\left(\frac{\zeta}{\sigma}-\frac{\alpha(t)}{\sigma\sqrt{t}}\right)\leq\varepsilon.
\end{align*}
Finally, we know that for arbitrary $\zeta>0$ and $\varepsilon>0$, when $t\geq T(\alpha,\varepsilon)$ 
\begin{align*}
\mathbb{P}(|Y_{t}|\geq \zeta)\geq 1-\varepsilon.
\end{align*}

\end{proof}
\subsection{Proof for Theorem \ref{thm:4.2}}
\begin{proof}
The conditional measure $\nu( x_{i}\mid  x_{\sim i})$ is
\begin{align*}
	\nu_{\beta,h}( x_{i}\mid  x_{\sim i})=&\frac{e^{-\frac{\beta}{2}\langle x,W x\rangle+\langle x,h\rangle}}{\sum_{ x_{i}=\pm 1}e^{-\frac{\beta}{2}\langle x,W x\rangle+\langle x,h\rangle}}\\
	=&\frac{e^{-\frac{\beta}{2}\sum_{k\neq i}W_{ik} x_{i} x_{k}-\frac{\beta}{2}W_{ii}-\frac{\beta}{2}\sum_{k,j\neq i}W_{jk} x_{j} x_{k}+h_{i} x_{i}+\sum_{k\neq i}h_{k} x_{k}}}{\sum_{ x_{i}=\pm 1}e^{-\frac{\beta}{2}\sum_{k\neq i}W_{ik} x_{i} x_{k}-\frac{\beta}{2}W_{ii}-\frac{\beta}{2}\sum_{k,j\neq i}W_{jk} x_{j} x_{k}+h_{i} x_{i}+\sum_{k\neq i}h_{k} x_{k}}}\\
	=&\frac{e^{-\frac{\beta}{2}\sum_{k\neq i}W_{ik} x_{i} x_{k}+h_{i} x_{i}}}{e^{-\frac{\beta}{2}\sum_{k\neq i}W_{ik} x_{k}+h_{i}}+e^{\frac{\beta}{2}\sum_{k\neq i}W_{ik} x_{k}-h_{i}}}.	
\end{align*}	
Because
	\begin{align*}
		\mathcal{W}_{1,d}(\nu_{\beta,h}( x_{i}\mid  x_{\sim i}),\nu_{\beta,h}( x_{i}\mid \hat{ x}_{\sim i}))=\sup_{A\in\mathcal{B}}\left|\nu_{\beta,h}(A\mid  x_{\sim i})-\nu_{\beta,h}(A\mid \hat{ x}_{\sim i})\right|,
	\end{align*}
	where $\mathcal{B}=\{\emptyset,\{+1,-1\},\{+1\},\{-1\}\}$. For $A=\{+1\}$,
\begin{align*}
	&\left|\nu_{\beta,h}( x_{i}=+1\mid  x_{\sim i})-\nu_{\beta,h}( x_{i}=+1\mid \hat{ x}_{\sim i})\right|\\
	&=\left|\frac{e^{-\frac{\beta}{2}\sum_{k\neq i,j}W_{ik} x_{k}+h_{i}-\frac{\beta}{2}W_{ij} x_{j}}}{2\cosh\left(-\frac{\beta}{2}\sum_{k\neq i,j}W_{ik} x_{k}+h_{i}-\frac{\beta}{2}W_{ij} x_{j}\right)}-\frac{e^{-\frac{\beta}{2}\sum_{k\neq i,j}W_{ik} x_{k}+h_{i}-\frac{\beta}{2}W_{ij}\hat{ x}_{j}}}{2\cosh\left(-\frac{\beta}{2}\sum_{k\neq i,j}W_{ik} x_{k}+h_{i}-\frac{\beta}{2}W_{ij}\hat{ x}_{j}\right)}\right|,
\end{align*}	
and for $A=\{-1\}$,
\begin{align*}
	&\left|\nu_{\beta,h}( x_{i}=-1\mid  x_{\sim i})-\nu_{\beta,h}( x_{i}=-1\mid \hat{ x}_{\sim i})\right|\\
	&=\left|\frac{e^{\frac{\beta}{2}\sum_{k\neq i,j}W_{ik} x_{k}-h_{i}+\frac{\beta}{2}W_{ij} x_{j}}}{2\cosh\left(-\frac{\beta}{2}\sum_{k\neq i,j}W_{ik} x_{k}+h_{i}-\frac{\beta}{2}W_{ij} x_{j}\right)}-\frac{e^{\frac{\beta}{2}\sum_{k\neq i,j}W_{ik} x_{k}-h_{i}+\frac{\beta}{2}W_{ij}\hat{ x}_{j}}}{2\cosh\left(-\frac{\beta}{2}\sum_{k\neq i,j}W_{ik} x_{k}+h_{i}-\frac{\beta}{2}W_{ij}\hat{ x}_{j}\right)}\right|.
\end{align*}	
Consider function
\begin{align*}
g_{1}(x)=\frac{e^{x}}{2\cosh(x)},g_{2}(x)=\frac{e^{-x}}{2\cosh(x)},
\end{align*}	
we know that
\begin{align*}
	g^{\prime}_{1}(x)=-	g^{\prime}_{2}(x)=\frac{2}{\left(e^{x}+e^{-x}\right)^{2}}.
\end{align*}	
Taking $h_{0}\geq 0$ such that
\begin{align*}
	\beta\sup_{i\in T}\sum_{k\neq i}|W_{ik}|\leq h_{0},
\end{align*}	
then
\begin{align*}
\left|\frac{\beta}{2}\sum_{k\neq i,j}W_{ik} x_{k}+\frac{\beta}{2}W_{ij} x_{j}\right|\leq \frac{	\beta}{2}\sup_{i\in T}\sum_{k\neq i}|W_{ik}|\leq\frac{h_{0}}{2}.
\end{align*}	
Hence, for $h=2h_{0}$ 
\begin{align*}
\frac{3h_{0}}{2}=h-\frac{h_{0}}{2}	\leq -\frac{\beta}{2}\sum_{k\neq i,j}W_{ik} x_{k}-\frac{\beta}{2}W_{ij} x_{j}+h\leq\frac{h_{0}}{2}+h,
\end{align*}	
and for $h=-2h_{0}$ 
\begin{align*}
	h-\frac{h_{0}}{2}	\leq- \frac{\beta}{2}\sum_{k\neq i,j}W_{ik} x_{k}-\frac{\beta}{2}W_{ij} x_{j}+h\leq\frac{h_{0}}{2}+h=-\frac{3h_{0}}{2}.
\end{align*}	
Putting everything together we get for $|h|=2h_{0}$ 
\begin{align*}
	\left|- \frac{\beta}{2}\sum_{k\neq i,j}W_{ik} x_{k}-\frac{\beta}{2}W_{ij} x_{j}+h\right|\geq\frac{3h_{0}}{2}.
\end{align*}	
and 
\begin{align*}
	g_{1}^{\prime}\left(- \frac{\beta}{2}\sum_{k\neq i,j}W_{ik} x_{k}-\frac{\beta}{2}W_{ij} x_{j}+h\right)\leq \frac{2}{e^{\frac{3h_{0}}{2}}+e^{-\frac{3h_{0}}{2}}}=\frac{2}{e^{\frac{3|h|}{4}}+e^{-\frac{3|h|}{4}}}.
\end{align*}	
Hence, for
\begin{align*}
	2\beta\sup_{i\in T}\sum_{k\neq i}|W_{ik}|\leq |h|,
\end{align*}	
we get
\begin{align*}
	\mathcal{W}_{1,d}(\nu_{\beta,h}( x_{i}\mid  x_{\sim i}),\nu_{\beta,h}( x_{i}\mid \hat{ x}_{\sim i}))&=\sup_{A\in\mathcal{B}}\left|\nu_{\beta,h}(A\mid  x_{\sim i})-\nu_{\beta,h}(A\mid \hat{ x}_{\sim i})\right|\\
	&\leq \frac{2}{e^{\frac{3|h|}{4}}+e^{-\frac{3|h|}{4}}}\left|\frac{\beta}{2}W_{ij}( x_{j}-\hat{ x}_{j})\right|\leq \frac{2\beta|W_{ij}|}{e^{\frac{3|h|}{4}}+e^{-\frac{3|h|}{4}}},
\end{align*}
which implies
	\begin{align*}
	c_{ij}:=\sup_{x=y \ \text{off}\ j}\frac{\mathcal{W}_{1,d}(\nu_{i}(\cdot\mid x),\nu_{i}(\cdot\mid y))}{d(x_{j},y_{j})}\leq \frac{2\beta|W_{ij}|}{e^{\frac{3|h|}{4}}+e^{-\frac{3|h|}{4}}}, i,j\in [N].
\end{align*}
Hence,
	\begin{align*}
	\Vert C\Vert_{\infty}=\sup_{i}\sum_{j\in [N]}c_{ij}\leq \sup_{i}\sum_{j\in [N]} \frac{2\beta|W_{ij}|}{e^{\frac{3|h|}{4}}+e^{-\frac{3|h|}{4}}}\leq  \frac{|h|}{e^{\frac{3|h|}{4}}+e^{-\frac{3|h|}{4}}}.
\end{align*}
Therefore $$r_{\text{sp}}(C)\leq \Vert C\Vert_{\infty}\leq  \frac{|h|}{e^{\frac{3|h|}{4}}+e^{-\frac{3|h|}{4}}}. $$ Combine with Lemma \ref{lemmaA1}, we have
\begin{align*}
	\text{Var}_{\nu_{\beta,h}}(f)\leq \frac{1}{1-\frac{|h|}{e^{\frac{3|h|}{4}}+e^{-\frac{3|h|}{4}}}}\mathcal{E}_{GD}(f,f).
\end{align*}	
\end{proof}
\subsection{Proof of Corollary \ref{cor4.4}}
\begin{proof}
The following lemma gives a Chernoff-type bound for Markov chains.
    \begin{lemma}[Theorem 3.6 of \cite{dubhashi2009concentration}]\label{lemma:C1}
        Let $X_1,X_2,\ldots,X_n$ be a sequence generated by a irreducible and reversible Markov chain on a finite set with invariant distribution $\pi$ and spectral gap $\gamma$. Then, for any initial distribution $q$; any positive integer $n$ and all $\epsilon>0$,  
$$\begin{aligned}
    P_{q}\left[\left|\sum_{i=1}^nf(X_i)-n\pi(f)\right|\ge\varepsilon\right]\leq C_{\gamma,n,q}\exp\left(-\frac{\varepsilon^2\gamma}{cn}\right),
 \end{aligned}$$
 where $c$ is an absolute constant and $C_{\gamma,n,q}$ is a rational function. \citet{lezaud1998chernoff,gillman1998chernoff} also prove the same result.
    \end{lemma}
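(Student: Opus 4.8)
The plan is to establish this bound by the classical Chernoff (exponential-moment) method, controlling the moment generating function of the Markov-chain average through the spectral gap via an operator-perturbation argument. After the usual normalization $0 \le f \le 1$ (otherwise rescale $f$, which only alters the absolute constant $c$), set $\tilde f = f - \pi(f)$ so that $\pi(\tilde f) = 0$. For any $s > 0$, the exponential Markov inequality gives $P_q[\sum_{i=1}^n \tilde f(X_i) \ge \varepsilon] \le e^{-s\varepsilon}\,\mathbb{E}_q[e^{s\sum_{i=1}^n \tilde f(X_i)}]$, and the opposite tail is treated identically with $-s$; thus everything reduces to an upper bound on the MGF $\mathbb{E}_q[e^{s\sum_{i=1}^n \tilde f(X_i)}]$.

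The key device is to rewrite this MGF as a matrix product. Letting $E_s$ denote the diagonal matrix with entries $e^{s\tilde f(x)}$ and $P$ the transition matrix, the Markov property yields $\mathbb{E}_q[e^{s\sum_{i=1}^n \tilde f(X_i)}] = q^{\top} E_s (P E_s)^{n-1}\mathbf{1}$. Since $P$ is reversible with respect to $\pi$, it is self-adjoint on $L^2(\pi)$, and so is the sandwiched operator $M(s) := E_s^{1/2} P E_s^{1/2}$. Writing $E_s (P E_s)^{n-1} = E_s^{1/2} M(s)^{n-1} E_s^{1/2}$ and applying Cauchy--Schwarz together with the spectral decomposition of $M(s)$ in $L^2(\pi)$, I would obtain $\mathbb{E}_q[e^{s\sum \tilde f(X_i)}] \le \lVert q/\pi\rVert_{L^2(\pi)}\,\rho(s)^{\,n}$, where $\rho(s)$ is the principal (Perron) eigenvalue of $M(s)$. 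This already separates the initial-distribution factor (the source of $C_{\gamma,n,q}$) from the per-step growth factor $\rho(s)$.

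The heart of the argument is the perturbative estimate $\rho(s) \le \exp(c_0 s^2/\gamma)$ on a suitable range of $s$. At $s = 0$ the operator is $P$ itself, with top eigenvalue $1$ attained at the constant function $\mathbf{1}$ and a gap $\gamma$ to the remainder of the spectrum. Treating $\rho(s)$ as the analytic principal-eigenvalue branch (Kato perturbation theory), first-order perturbation gives $\rho'(0) = \langle \mathbf{1}, M'(0)\mathbf{1}\rangle_{L^2(\pi)} = \pi(\tilde f) = 0$ because $\tilde f$ is centered and $P\mathbf{1} = \mathbf{1}$; hence the leading correction is quadratic. The second-order coefficient is an asymptotic-variance term of the form $\langle \tilde f, (I - P)^{-1}\tilde f\rangle$ taken on the orthogonal complement of the constants, whose operator norm is at most $1/\gamma$. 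Controlling the higher-order remainder uniformly, using $0 \le f \le 1$ to bound $\lVert E_s^{1/2}\rVert$, upgrades the expansion to the stated exponential bound. Substituting back yields $P_q[\cdots \ge \varepsilon] \le \lVert q/\pi\rVert_{L^2(\pi)}\exp(-s\varepsilon + c_0 n s^2/\gamma)$, and optimizing over $s \asymp \gamma\varepsilon/n$ produces the exponent $-\varepsilon^2\gamma/(cn)$; tracking the admissible range of $s$ and the initial-distribution norm assembles the prefactor into the rational function $C_{\gamma,n,q}$.

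The main obstacle is the \emph{uniform} perturbation bound on $\rho(s)$: Kato's theory readily supplies the $s \to 0$ expansion with vanishing first order and the $1/\gamma$ second-order term, but converting this into a clean, non-asymptotic inequality $\rho(s) \le e^{c_0 s^2/\gamma}$ valid across the whole range of $s$ needed for the optimization requires careful control of the resolvent on the complement of the constants and of the eigenvalue remainder. A secondary technical point is tracking the dependence on the initial distribution $q$ so that the prefactor genuinely takes the claimed rational form rather than a crude $L^2(\pi)$ bound.
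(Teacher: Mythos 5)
The paper offers no proof of this lemma at all: it is imported verbatim as Theorem~3.6 of Dubhashi and Panconesi, with Lezaud (1998) and Gillman (1998) cited as the original sources, so the only thing to compare your attempt against is the argument in those references. Your proposal reconstructs exactly that standard Gillman--Lezaud argument --- writing the moment generating function as the matrix product $q^{\top}E_s(PE_s)^{n-1}\mathbf{1}$, symmetrizing to the self-adjoint tilted kernel $E_s^{1/2}PE_s^{1/2}$ on $L^2(\pi)$ (which is where reversibility enters, and where the $\|q/\pi\|_{L^2(\pi)}$ prefactor comes from), bounding its Perron eigenvalue $\rho(s)$ by perturbation theory with vanishing first order for centered $f$ and a second-order asymptotic-variance term controlled by $\|(I-P)^{-1}\|\le 1/\gamma$ on the orthocomplement of constants, then optimizing $s\asymp\gamma\varepsilon/n$ --- so the route is correct and is the same one used by the cited sources. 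The single step you leave open, upgrading Kato's local expansion to the uniform non-asymptotic inequality $\rho(s)\le e^{c_0 s^2/\gamma}$ over the full range of $s$ needed for the optimization, is precisely where Lezaud's paper invests its technical effort (explicit majorization of the perturbation series via resolvent bounds), so the obstacle you flag is genuine but known to close by exactly the method you propose.
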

    It is well-known that Glauber dynamics is reversible with respect to Gibbs measure $\nu_{\beta,h}$ \cite{bertoin1999lectures}. And we consider sampling from a finite set $\{-1,1\}^N$, thus Glauber dynamics is irreducible. From lemma \ref{lemma:C1}, we have
    $$\begin{aligned}
    P_{q}\left[\left|\frac{1}{n}\sum_{i=1}^nX_i-\mathbb{E}_{\nu_{\beta,h}} [X]\right|\ge\varepsilon\right]\leq C_{\gamma_{\text{gap}},n,q}e^{-\frac{n\varepsilon^2\gamma_{\text{gap}}}{c}},
 \end{aligned}$$
 where $\gamma_{\text{gap}}$ is spectral gap of Glauber dynamics corresponding to $\nu_{\beta,h}$. Combining with Theorem \ref{thm:4.2}, we have $\gamma_{\text{gap}}=1-\frac{|h|}{e^{\frac{3|h|}{4}}+e^{-\frac{3|h|}{4}}}$.

\end{proof}

\subsection{Proof of Theorem \ref{thm:classmh}}\label{sec:4.4.5}
\begin{proof}
For $i\in T$, $x\rightarrow x^{i}$ with probability
\begin{align*}
	P(x^{i}\mid x)=\min\{1,\frac{\nu_{\beta,h}(x^{i})}{\nu_{\beta,h}(x)}\}=\min\{1,e^{2\beta x_{i}\sum_{j\neq i}W_{ij}x_{j}-2h_{i}x_{i}}\},
\end{align*}
and keep invariant with probability
\begin{align*}
	1-P(x^{i}\mid x)=1-\min\{1,e^{2\beta x_{i}\sum_{j\neq i}W_{ij}x_{j}-2h_{i}x_{i}}\},
\end{align*}
Then
\begin{align*}
	\mathcal{L}f(x)=&\int_{E^{T}}(f(y)-f(x))p(y \mid x)=\sum_{i=1}^{T} \int_{E^{i}}(f(x^{y_{i}})-f(x))
	P(x^{y_{i}}\mid x)\\
	=&\sum_{i=1}^{T} (f(x^{i})-f(x))\min\{1,e^{2\beta x_{i}\sum_{j\neq i}W_{ij}x_{j}-2h_{i}x_{i}}\},
\end{align*}
Define the local kernel  as follows
\begin{align*}
	\nu_{i}(x,x^{i} )=&\min\{1,e^{2\beta x_{i}\sum_{j\neq i}W_{ij}x_{j}-2h_{i}x_{i}}\},
\end{align*}
Obviously, $\nu_{i}(x,E^{i})=1$ for any $x$, then $\eta=1$. 
\begin{align*}
	c_{i}(j)=&\sup_{x=\hat{x} \ \text{off}\ j}\left|\nu_{i}(x,\cdot)-\nu_{i}(\hat{x},\cdot)\right|_{TV}
	=\sup_{x=\hat{x} \ \text{off}\ j}\sup_{\mathcal{B}}\left|\nu_{i}(x,B)-\nu_{i}(\hat{x},B)\right|\\
	=&\sup_{x=\hat{x} \ \text{off}\ j}\sup_{\mathcal{B}_{i}}\left|\nu_{i}(x_{ i},A)-\nu_{i}(\hat{x}_{i},A)\right|\\
	=&\sup_{x=\hat{x} \ \text{off}\ j}\max\{\left|\min\{1,e^{2\beta x_{i}\sum_{k\neq i,j}W_{ik}x_{k}+2\beta x_{i}W_{ij}x_{j}-2h_{i}x_{i}}\}-\min\{1,e^{2\beta x_{i}\sum_{k\neq i,j}W_{ik}x_{k}-2\beta x_{i}W_{ij}x_{j}-2h_{i}x_{i}}\}\right|,\\
	&\left|1-\min\{1,e^{2\beta x_{i}\sum_{k\neq i,j}W_{ik}x_{k}+2\beta x_{i}W_{ij}x_{j}-2h_{i}x_{i}}\}-1+\min\{1,e^{2\beta x_{i}\sum_{k\neq i,j}W_{ik}x_{k}-2\beta x_{i}W_{ij}x_{j}-2h_{i}x_{i}}\}\right|\}\\
	=&\sup_{x=\hat{x} \ \text{off}\ j}\left|\min\{1,e^{2\beta x_{i}\sum_{k\neq i,j}W_{ik}x_{k}+2\beta x_{i}W_{ij}x_{j}-2h_{i}x_{i}}\}-\min\{1,e^{2\beta x_{i}\sum_{k\neq i,j}W_{ik}x_{k}-2\beta x_{i}W_{ij}x_{j}-2h_{i}x_{i}}\}\right|.
\end{align*}	
Because
\begin{align*}
	\left|2\beta\sum_{k\neq i,j}W_{ik}x_{k}+2\beta W_{ij}x_{j}\right|\leq 2\beta \sup_{i\in T}\sum_{k\neq i}|W_{ik}|\leq2h_{0}.
\end{align*}	
Hence, for $h=2h_{0}$ 
\begin{align*}
	-3h	&\leq 2\beta\sum_{k\neq i,j}W_{ik}x_{k}+2\beta W_{ij}x_{j}-2h\leq-h\\
	-3h	&\leq 2\beta\sum_{k\neq i,j}W_{ik}x_{k}-2\beta W_{ij}x_{j}-2h\leq-h,
\end{align*}	
and for $h=-2h_{0}$ 
\begin{align*}
	2h_{0}	&\leq 2\beta\sum_{k\neq i,j}W_{ik}x_{k}+2\beta W_{ij}x_{j}-2h\leq 6h_{0}\\
	2h_{0}	&\leq 2\beta\sum_{k\neq i,j}W_{ik}x_{k}-2\beta W_{ij}x_{j}-2h\leq6h_{0}.
\end{align*}	
Putting everything together we get for $|h|=2h_{0}$ 
\begin{align*}
	\left|2\beta\sum_{k\neq i,j}W_{ik}x_{k}\pm2\beta W_{ij}x_{j}-2h\right|\geq 2h_{0}.
\end{align*}	
Hence,
\begin{align*}
	c_{i}(j)
	=&\sup_{x=\hat{x} \ \text{off}\ j}\left|\min\{1,e^{2\beta x_{i}\sum_{k\neq i,j}W_{ik}x_{k}+2\beta x_{i}W_{ij}x_{j}-2h_{i}x_{i}}\}-\min\{1,e^{2\beta x_{i}\sum_{k\neq i,j}W_{ik}x_{k}-2\beta x_{i}W_{ij}x_{j}-2h_{i}x_{i}}\}\right|\\
	\leq &  e^{-2h_{0}}4\beta|W_{ij}|=e^{-|h|}4\beta|W_{ij}|.
\end{align*}	
Finally,
\begin{align*}
	\Vert C\Vert_{\infty}=\sup_{i}\sum_{j\in [N]}c_{i}(j)\leq 2|h|e^{-|h|},
\end{align*}
and by Lemma \ref{lemmaA1}, we can get
\begin{align*}
	\text{Var}_{\nu_{\beta,h}}(f)\leq \frac{1}{1-2|h|e^{-|h|}}\mathcal{E}_{MH}(f,f).
\end{align*}	
\end{proof}
%\section{Proofs for section \ref{sec:4.3}}\label{sec:4.4}
\subsection{Proof of Theorem \ref{thm:abs}}
\begin{proof}

The transition matrix of Metropolis chain is
\begin{align*}
P(y\mid x)=\Psi(x,y)\min\{1,\frac{\nu_{\beta,h}(y)\Psi(x\mid y)}{\nu_{\beta,h}(x)\Psi(y\mid x)}\},\ \text{if} \ x\neq y,
\end{align*}
and 
\begin{align*}
	P(y\mid x)=1-\sum_{x\neq y}\Psi(x,y)\min\{1,\frac{\pi(y)\Psi(x\mid y)}{\pi(x)\Psi(y\mid x)}\},\ \text{if} \ x= y.
\end{align*}
Particularly, on the configuration $\{+1,-1\}^{N}$, the dynamic flips one site each time, that is for $|x-y|\geq 2$
\begin{align*}
	P(y\mid x)=0,
\end{align*}
and for $i\in [N]$, $x\rightarrow x^{i}$ with probability
\begin{align*}
	P(x^{i}\mid x)=\Psi(x^{i}\mid x)\min\{1,\frac{\nu_{\beta,h}(y)\Psi( x\mid x^{i})}{\nu_{\beta,h}(x)\Psi(x^{i}\mid x)}\},
\end{align*}
and keep invariant with probability
\begin{align*}
	1-P(x^{i}\mid x)=1-\Psi(x^{i}\mid x)\min\{1,\frac{\nu_{\beta,h}(x^{i})\Psi( x\mid x^{i})}{\nu_{\beta,h}(x)\Psi(x^{i}\mid x)}\},
\end{align*}
Then
\begin{align*}
	\mathcal{L}f(x)=&\int_{E^{T}}(f(y)-f(x))p(y \mid x)=\sum_{i=1}^{T} \int_{E^{i}}(f(x^{y_{i}})-f(x))
	P(x^{y_{i}}\mid x)\\
	=&\sum_{i=1}^{T} (f(x^{i})-f(x))\Psi(x^{i}\mid x)\min\{1,\frac{\pi(x^{i})\Psi( x\mid x^{i})}{\pi(x)\Psi(x^{i}\mid x)}\},
\end{align*}
Define the local kernel  as follows
\begin{align*}
	\nu_{i}(x,x^{i} )=&\Psi(x^{i}\mid x)\min\{1,\frac{\nu_{\beta,h}(x^{i})\Psi( x\mid x^{i})}{\nu_{\beta,h}(x)\Psi(x^{i}\mid x)}\}.
\end{align*}
Obviously, $\nu_{i}(x,E^{i})=1$ for any $x$, then $\eta=1$.

For $\mathcal{B}=\{+1,-1\}^{N}$, $i\neq j$
\begin{align*}
	c_{i}(j)=&\sup_{ x=\hat{ x} \ \text{off}\ j}\left|\nu_{i}( x,\cdot)-\nu_{i}(\hat{ x},\cdot)\right|_{TV}
	=\sup_{ x=\hat{ x} \ \text{off}\ j}\sup_{\mathcal{B}}\left|\nu_{i}( x,B)-\nu_{i}(\hat{ x},B)\right|\\
	=&\sup_{ x=\hat{ x} \ \text{off}\ j}\sup_{\mathcal{B}_{i}}\left|\nu_{i}( x_{\sim i},A)-\nu_{i}(\hat{ x}_{\sim i},A)\right|\\
	=&\sup_{ x=\hat{ x} \ \text{off}\ j}\max\{\left|\nu_{i}( x_{\sim i},+1)-\nu_{i}(\hat{ x}_{\sim i},+1)\right|,\left|\nu_{i}( x_{\sim i},-1)-\nu_{i}(\hat{ x}_{\sim i},-1)\right|\}\\
	=&\sup_{ x=\hat{ x} \ \text{off}\ j}\max\{\left|\Psi( x^{i}\mid  x)\min\{1,\frac{\nu_{\beta,h}( x^{i})\Psi(  x\mid  x^{i})}{\nu_{\beta,h}( x)\Psi( x^{i}\mid  x)}\}-\Psi(\hat{ x}^{i}\mid \hat{ x})\min\{1,\frac{\nu_{\beta,h}(\hat{ x}^{i})\Psi( \hat{ x}\mid \hat{ x}^{i})}{\nu_{\beta,h}(\hat{ x})\Psi(\hat{ x}^{i}\mid \hat{ x})}\}\right|,\\
	&\left|1-\Psi( x^{i}\mid  x)\min\{1,\frac{\nu_{\beta,h}( x^{i})\Psi(  x\mid  x^{i})}{\nu_{\beta,h}( x)\Psi( x^{i}\mid  x)}\}-1+\Psi(\hat{ x}^{i}\mid \hat{ x})\min\{1,\frac{\nu_{\beta,h}(\hat{ x}^{i})\Psi( \hat{ x}\mid \hat{ x}^{i})}{\nu_{\beta,h}(\hat{ x})\Psi(\hat{ x}^{i}\mid \hat{ x})}\}\right|\}\\
	=&\sup_{ x=\hat{ x} \ \text{off}\ j}\left|\Psi( x^{i}\mid  x)\min\{1,\frac{\nu_{\beta,h}( x^{i})\Psi(  x\mid  x^{i})}{\nu_{\beta,h}( x)\Psi( x^{i}\mid  x)}\}-\Psi(\hat{ x}^{i}\mid \hat{ x})\min\{1,\frac{\nu_{\beta,h}(\hat{ x}^{i})\Psi( \hat{ x}\mid \hat{ x}^{i})}{\nu_{\beta,h}(\hat{ x})\Psi(\hat{ x}^{i}\mid \hat{ x})}\}\right|\leq C_{Lip}(\beta,h)|\beta W_{ij}|.
\end{align*}	
Hence,
\begin{align*}
	\Vert C\Vert_{\infty}=\sup_{i}\sum_{j\in [N]}c_{i}(j)\leq C_{Lip}(\beta,h)|h|,
\end{align*}
 which implies the Poincar\'e inequality
 \begin{align*}
	\text{Var}_{\nu_{\beta,h}}(f)\leq \frac{1}{1-C_{Lip}(\beta,h)|h|}\mathcal{E}_{MH}(f,f).
\end{align*}	
\end{proof}

\subsection{Proof of Theorem \ref{thm:1site}}
\begin{proof}
 For Gibbs measure
\begin{align*}
	\nu_{\beta,h}=\frac{1}{Z_{\beta,h}}e^{-\frac{\beta}{2}\langle x,W x\rangle+\langle x,h\rangle},	
\end{align*}	
we consider the following with kernel 
\begin{align*}
	P(x^{i}|x)=\Psi( x^{i}\mid  x)\min\{1,\frac{\nu_{\beta,h}(x^{i})\Psi( x\mid  x^{i})}{\nu_{\beta,h}(x)\Psi( x^{i}\mid  x)}\},	
\end{align*}	
where
\begin{align*}
	\Psi( x^{i}\mid  x)=&\frac{1}{Z_{\beta,h}(x)}e^{-\frac{\beta}{2}\langle x,W( x^{i}- x)\rangle+\frac{1}{2}\langle x^{i}- x,h\rangle}=\frac{e^{\beta x_{i}\sum_{j\neq i}W_{ij} x_{j}-h x_{i}}}{Z_{\beta,h}( x)},
\end{align*}
and
\begin{align*}
	Z_{\beta,h}( x)=&1+e^{\beta x_{i}\sum_{j\neq i}W_{ij} x_{j}-h x_{i}},
\end{align*}	
then
\begin{align*}
	\Psi( x^{i}\mid  x)=&\frac{1}{Z_{\beta,h}( x)}e^{-\frac{\beta}{2}\langle x,W( x^{i}- x)\rangle+\frac{1}{2}\langle x^{i}- x,h\rangle}=\frac{e^{\beta x_{i}\sum_{j\neq i}W_{ij} x_{j}-h x_{i}}}{1+e^{\beta x_{i}\sum_{j\neq i}W_{ij} x_{j}-h x_{i}}}\\
	&=\frac{e^{\frac{1}{2}\beta x_{i}\sum_{j\neq i}W_{ij} x_{j}-\frac{1}{2}h x_{i}}}{e^{-\frac{1}{2}\beta x_{i}\sum_{j\neq i}W_{ij} x_{j}+\frac{1}{2}h x_{i}}+e^{\frac{1}{2}\beta x_{i}\sum_{j\neq i}W_{ij} x_{j}-\frac{1}{2}h x_{i}}},
\end{align*}
and
\begin{align*}
	\Psi( x\mid  x^{i})
	=&\frac{e^{-\frac{1}{2}\beta x_{i}\sum_{j\neq i}W_{ij} x_{j}+\frac{1}{2}h x_{i}}}{e^{-\frac{1}{2}\beta x_{i}\sum_{j\neq i}W_{ij} x_{j}+\frac{1}{2}h x_{i}}+e^{\frac{1}{2}\beta x_{i}\sum_{j\neq i}W_{ij} x_{j}-\frac{1}{2}h x_{i}}}.
\end{align*}
Hence,
\begin{align*}
\frac{\Psi( x\mid  x^{i})}{\Psi(  x^{i}\mid x)}
	=&\frac{e^{-\frac{1}{2}\beta x_{i}\sum_{j\neq i}W_{ij} x_{j}+\frac{1}{2}h x_{i}}}{e^{\frac{1}{2}\beta x_{i}\sum_{j\neq i}W_{ij} x_{j}-\frac{1}{2}h x_{i}}}=e^{-\beta x_{i}\sum_{j\neq i}W_{ij} x_{j}+h x_{i}}.
\end{align*}
Furthermore,
\begin{align*}
	\Psi(\hat{ x}^{i}\mid \hat{ x})=&\frac{e^{\frac{1}{2}\beta x_{i}\sum_{k\neq i,j}W_{ik} x_{k}-\frac{1}{2}\beta x_{i}W_{ij} x_{j}-\frac{1}{2}h x_{i}}}{e^{-\frac{1}{2}\beta x_{i}\sum_{k\neq i,j}W_{ik} x_{k}+\frac{1}{2}\beta x_{i}W_{ij} x_{j}+\frac{1}{2}h x_{i}}+e^{\frac{1}{2}\beta x_{i}\sum_{k\neq i,j}W_{ik} x_{k}-\frac{1}{2}\beta x_{i}W_{ij} x_{j}-\frac{1}{2}h x_{i}}}\\
	\Psi(\hat{ x}\mid \hat{ x}^{i})
	=&\frac{e^{-\frac{1}{2}\beta x_{i}\sum_{k\neq i,j}W_{ik} x_{k}+\frac{1}{2}\beta x_{i}W_{ij} x_{j}+\frac{1}{2}h x_{i}}}{e^{-\frac{1}{2}\beta x_{i}\sum_{k\neq i,j}W_{ik} x_{k}+\frac{1}{2}\beta x_{i}W_{ij} x_{j}+\frac{1}{2}h x_{i}}+e^{\frac{1}{2}\beta x_{i}\sum_{k\neq i,j}W_{ik} x_{k}-\frac{1}{2}\beta x_{i}W_{ij} x_{j}-\frac{1}{2}h x_{i}}}.
\end{align*}	
and
\begin{align*}
	\frac{\Psi(\hat{ x}\mid \hat{ x}^{i})}{\Psi( \hat{ x}^{i}\mid\hat{ x})}
	=&e^{-\beta x_{i}\sum_{k\neq i,j}W_{ik} x_{k}+\beta x_{i}W_{ij} x_{j}+h x_{i}}.
\end{align*}
Besides,
\begin{align*}
	\frac{\nu_{\beta,h}( x^{i})}{\nu_{\beta,h}( x)}=&e^{2\beta x_{i}\sum_{k\neq i,j}W_{ik} x_{k}+2\beta x_{i}W_{ij} x_{j}-2h x_{i}},
	\frac{\nu_{\beta,h}(\hat{ x}^{i})}{\nu_{\beta,h}(\hat{ x})}=e^{2\beta x_{i}\sum_{k\neq i,j}W_{ik} x_{k}-2\beta x_{i}W_{ij} x_{j}-2h x_{i}},
\end{align*}	
and
\begin{align*}
	\frac{\nu_{\beta,h}( x^{i})}{\nu_{\beta,h}( x)}\frac{\Psi( x\mid  x^{i})}{\Psi(  x^{i}\mid x)}
	=&e^{-\beta x_{i}\sum_{k\neq i,j}W_{ik} x_{k}-\beta x_{i}W_{ij} x_{j}+h x_{i}}e^{2\beta x_{i}\sum_{k\neq i,j}W_{ik} x_{k}+2\beta x_{i}W_{ij} x_{j}-2h x_{i}}\\
	=&e^{\beta x_{i}\sum_{k\neq i,j}W_{ik} x_{k}+\beta x_{i}W_{ij} x_{j}-h x_{i}}\\
	\frac{\nu_{\beta,h}(\hat{ x}^{i})}{\nu_{\beta,h}(\hat{ x})}	\frac{\Psi(\hat{ x}\mid \hat{ x}^{i})}{\Psi( \hat{ x}^{i}\mid\hat{ x})}
	=&e^{\beta x_{i}\sum_{k\neq i,j}W_{ik} x_{k}-\beta x_{i}W_{ij} x_{j}-h x_{i}}.
\end{align*}	
Consider the function
\begin{align*}
f(x,y)=\frac{\min\{1,e^{x+y}\}}{1+e^{x+y}}=\frac{1}{2}-\frac{|1-e^{x+y}|}{2(1+e^{x+y})},
\end{align*}
then for $y_{1},y_{2}\in(-\infty,-x)\cup(-x,+\infty)$
\begin{align*}
	|f(x,y_{1})-f(x,y_{2})|\leq \frac{1}{(e^{\frac{x+y}{2}}+e^{-\frac{x+y}{2}})^{2}}|y_{2}-y_{1}|,
\end{align*}
for $y_{1}\leq -x\leq y_{2}$
\begin{align*}
	|f(x,y_{1})-f(x,y_{2})|\leq& |f(x,y_{1})-f(x,-x)|\leq \frac{1}{(e^{\frac{x+y}{2}}+e^{-\frac{x+y}{2}})^{2}}|-x-y_{1}|\\
	\leq& \frac{1}{(e^{\frac{x+y}{2}}+e^{-\frac{x+y}{2}})^{2}}|y_{2}-y_{1}|.
\end{align*}
Denote that
\begin{align*}
&x=\beta x_{i}\sum_{k\neq i,j}W_{ik} x_{k}-h x_{i}\\
&y=\beta x_{i}W_{ij} x_{j}.
\end{align*}
Recall that
for $h=2h_{0}$ 
\begin{align*}
	\frac{h_{0}}{2}=\frac{h}{2}-\frac{h_{0}}{2}	\leq -\frac{\beta}{2}\sum_{k\neq i,j}W_{ik} x_{k}-\frac{\beta}{2}W_{ij} x_{j}+\frac{h}{2}\leq\frac{h_{0}}{2}+\frac{h}{2},
\end{align*}	
and for $h=-2h_{0}$ 
\begin{align*}
	\frac{h}{2}-\frac{h_{0}}{2}	\leq- \frac{\beta}{2}\sum_{k\neq i,j}W_{ik} x_{k}-\frac{\beta}{2}W_{ij} x_{j}+\frac{h}{2}\leq\frac{h_{0}}{2}+\frac{h}{2}=-\frac{h_{0}}{2}.
\end{align*}	
Putting everything together we get for $|h|=2h_{0}$ 
\begin{align*}
	\left|- \frac{\beta}{2}\sum_{k\neq i,j}W_{ik} x_{k}-\frac{\beta}{2}W_{ij} x_{j}+\frac{h}{2}\right|\geq\frac{h_{0}}{2}.
\end{align*}	
Hence,
\begin{align*}
&\left|\Psi( x^{i}\mid  x)\min\{1,\frac{\nu_{\beta,h}( x^{i})\Psi(  x\mid  x^{i})}{\nu_{\beta,h}( x)\Psi( x^{i}\mid  x)}\}-\Psi(\hat{ x}^{i}\mid \hat{ x})\min\{1,\frac{\nu_{\beta,h}(\hat{ x}^{i})\Psi( \hat{ x}\mid \hat{ x}^{i})}{\nu_{\beta,h}(\hat{ x})\Psi(\hat{ x}^{i}\mid \hat{ x})}\}\right|\\
=&\left|\frac{e^{\frac{1}{2}(x+y)}}{e^{\frac{1}{2}(x+y)}+e^{-\frac{1}{2}(x+y)}}\min\{1,e^{x+y}\}-\frac{e^{\frac{1}{2}(x-y)}}{e^{\frac{1}{2}(x-y)}+e^{-\frac{1}{2}(x-y)}}\min\{1,e^{x-y}\}\right|\\
=&\left|\frac{1}{1+e^{-(x+y)}}\min\{1,e^{x+y}\}-\frac{1}{1+e^{-(x-y)}}\min\{1,e^{x-y}\}\right|\\
\leq&\frac{2}{(e^{\frac{x+y}{2}}+e^{-\frac{x+y}{2}})^{2}}|y|\\
\leq&\frac{2\beta|W_{ij}|}{(e^{\frac{\beta x_{i}\sum_{k\neq i,j}W_{ik} x_{k}-h x_{i}+\beta x_{i}W_{ij} x_{j}}{2}}+e^{\frac{-\beta x_{i}\sum_{k\neq i,j}W_{ik} x_{k}+h x_{i}-\beta x_{i}W_{ij} x_{j}}{2}})^{2}}\\
\leq&\frac{2\beta|W_{ij}|}{(e^{\frac{|h|}{4}}+e^{-\frac{|h|}{4}})^{2}}
\end{align*}	
Finally, we get the estimate
\begin{align*}
	c_{i}(j)\leq&\frac{2\beta|W_{ij}|}{(e^{\frac{|h|}{4}}+e^{-\frac{|h|}{4}})^{2}}.
\end{align*}
Using Lemma \ref{lemmaA1}, we have
\begin{align*}
	\text{Var}_{\nu_{\beta,h}}(f)\leq \frac{1}{1-\frac{|h|}{(e^{\frac{|h|}{4}}+e^{-\frac{|h|}{4}})^{2}}}\mathcal{E}_{MH}(f,f).
\end{align*}	    
\end{proof}

\subsection{Proof of Theorem \ref{thm:product}}\label{sec:B.6}
Define the transition kernel $J_{T}(x,-x)$ as follows
\begin{align*}
	J_{T}(x,-x)=\prod_{i=1}^{T}\Psi( x^{i}\mid x)
	=&\prod_{i=1}^{T}\frac{e^{\beta x_{i}\sum_{j\neq i}W_{ij}x_{j}-hx_{i}}}{1+e^{\beta x_{i}\sum_{j\neq i}W_{ij}x_{j}-hx_{i}}},
\end{align*}
and
\begin{align*}
	\prod_{i=1}^{T}\Psi( \hat{x}^{i}\mid \hat{x})
	=&\prod_{i=1}^{T}\frac{e^{\beta \hat{x}_{i}\sum_{j\neq i}W_{ij}\hat{x}_{j}-h\hat{x}_{i}}}{1+e^{\beta \hat{x}_{i}\sum_{j\neq i}W_{ij}\hat{x}_{j}-h\hat{x}_{i}}},
\end{align*}
where $x=\hat{x}$ off $j$.
Then
\begin{align*}
	&\prod_{i=1}^{T}\Psi( x^{i}\mid x)
	-\prod_{i=1}^{T}\Psi( \hat{x}^{i}\mid \hat{x})\\
	=&\Psi( x^{j}\mid x)\prod_{i\neq j}^{T}\Psi( x^{i}\mid x)
	-\Psi( \hat{x}^{j}\mid x)\prod_{i\neq j}^{T}\Psi( \hat{x}^{i}\mid \hat{x})\\
	=&\Psi( x^{j}\mid x)\prod_{i\neq j}^{T}\frac{e^{\beta x_{i}\sum_{k\neq i,j}W_{ik}x_{k}+\beta x_{i}W_{ij}x_{j}-hx_{i}}}{1+e^{\beta x_{i}\sum_{k\neq i,j}W_{ik}x_{k}+\beta x_{i}W_{ij}x_{j}-hx_{i}}}-\Psi( \hat{x}^{j}\mid \hat{x})\prod_{i\neq j}^{T}\frac{e^{\beta x_{i}\sum_{k\neq i,j}W_{ik}x_{k}-\beta x_{i}W_{ij}x_{j}-hx_{i}}}{1+e^{\beta x_{i}\sum_{k\neq i,j}W_{ik}x_{k}-\beta x_{i}W_{ij}x_{j}-hx_{i}}}\\
	:=&\prod_{i=1}^{N}f_{i}(x)-\prod_{i=1}^{N}f_{i}(\hat{x}),
\end{align*}
where
\begin{align*}
	f_{i}(x)=\frac{e^{\beta x_{i}\sum_{k\neq i,j}W_{ik}x_{k}+\beta x_{i}W_{ij}x_{j}-hx_{i}}}{1+e^{\beta x_{i}\sum_{k\neq i,j}W_{ik}x_{k}+\beta x_{i}W_{ij}x_{j}-hx_{i}}},i\neq j,
\end{align*}
and
\begin{align*}
f_{j}(x)=\Psi( x^{j}\mid x)=&\frac{e^{\beta x_{j}\sum_{k\neq j}W_{jk}x_{k}-hx_{j}}}{1+e^{\beta x_{j}\sum_{k\neq j}W_{jk}x_{k}-hx_{j}}}, f_{j}(\hat{x})=
\Psi( \hat{x}^{j}\mid \hat{x})=\frac{e^{-\beta x_{j}\sum_{k\neq j}W_{jk}x_{k}+hx_{j}}}{1+e^{-\beta x_{j}\sum_{k\neq j}W_{jk}x_{k}+hx_{j}}}.
\end{align*}
Recall that for $h_{0}\geq 0$ such that
\begin{align*}
	\beta\sup_{i\in T}\sum_{k\neq i}|W_{ik}|\leq h_{0},
\end{align*}	
then
\begin{align*}
	\left|\beta\sum_{k\neq i,j}W_{ik}x_{k}\pm\beta W_{ij}x_{j}\right|\leq 	\beta\sup_{i\in T}\sum_{k\neq i}|W_{ik}|\leq h_{0}.
\end{align*}	
Hence, for $h=2h_{0}$ 
\begin{align*}
	-3h_{0}	\leq \beta\sum_{k\neq i,j}W_{ik}x_{k}\pm\beta W_{ij}x_{j}-h\leq-h_{0},
\end{align*}	
and for $h=-2h_{0}$ 
\begin{align*}
	h_{0}	\leq \beta\sum_{k\neq i,j}W_{ik}x_{k}\pm\beta W_{ij}x_{j}-h\leq3h_{0}.
\end{align*}	
Putting everything together we get for $|h|=2h_{0}$ 
\begin{align*}
	\left|\beta\sum_{k\neq i,j}W_{ik}x_{k}\pm\beta W_{ij}x_{j}-h\right|\geq h_{0}.
\end{align*}	
For $|y|\geq h_{0}$
\begin{align*}
	0\leq \frac{d}{dy}\left(\frac{1}{1+e^{y}}\right)\leq& \frac{1}{(e^{\frac{h_{0}}{2}}+e^{-\frac{h_{0}}{2}})^{2}}.
\end{align*}
Because for any $i\in [N]$, $0< \Psi( x^{i}\mid x)\leq 1$, then 
\begin{align*}
	\left|\prod_{i=1}^{N}\Psi( x^{i}\mid x)
	-\prod_{i=1}^{N}\Psi( \hat{x}^{i}\mid \hat{x})\right|=&\left|\Psi( x^{1}\mid x)\prod_{i=2}^{N}\Psi( x^{i}\mid x)-\Psi( \hat{x}^{1}\mid \hat{x})\prod_{i=2}^{N}\Psi( x^{i}\mid x)\right|\\
	&+\left|\Psi( \hat{x}^{1}\mid \hat{x})\prod_{i=2}^{N}\Psi( x^{i}\mid x)-\prod_{i=1}^{N}\Psi( \hat{x}^{i}\mid \hat{x})\right|\\
	=&\left|\left(\Psi( x^{1}\mid x)-\Psi( \hat{x}^{1}\mid \hat{x})\right)\prod_{i=2}^{N}\Psi( x^{i}\mid x)\right|+\left|\Psi( \hat{x}^{1}\mid \hat{x})\left(\prod_{i=2}^{N}\Psi( x^{i}\mid x)-\prod_{i=2}^{N}\Psi( \hat{x}^{i}\mid \hat{x})\right)\right|\\
	\leq&\left|\Psi( x^{1}\mid x)-\Psi( \hat{x}^{1}\mid \hat{x})\right|+\left|\prod_{i=2}^{N}\Psi( x^{i}\mid x)-\prod_{i=2}^{N}\Psi( \hat{x}^{i}\mid \hat{x})\right|\\
	\leq&\sum_{i=1}^{N}\left|\Psi( x^{i}\mid x)-\Psi( \hat{x}^{i}\mid \hat{x})\right|.
\end{align*}
For $i\neq j$
\begin{align*}
	\left|\Psi( x^{i}\mid x)-\Psi( \hat{x}^{i}\mid \hat{x})\right|&\leq \frac{2\beta\left|W_{ij}\right|}{(e^{\frac{h_{0}}{2}}+e^{-\frac{h_{0}}{2}})^{2}}.
\end{align*}
For $i= j$
\begin{align*}
	\left|\Psi( x^{j}\mid x)-\Psi( \hat{x}^{j}\mid \hat{x})\right|\leq&
	\frac{1}{(e^{\frac{h_{0}}{2}}+e^{-\frac{h_{0}}{2}})^{2}}\left|2\beta \sum_{k\neq j}W_{jk}x_{k}-2h\right|\leq \frac{2\beta\sum_{k\neq j}\left|W_{jk}\right|+2|h|}{(e^{\frac{h_{0}}{2}}+e^{-\frac{h_{0}}{2}})^{2}}.
\end{align*}
Hence
\begin{align*}
c_{ij}=c_{[N]}(j)&=\sup_{x=\hat{x}\ \text{off}\ j}\left|\prod_{i=1}^{N}\Psi( x^{i}\mid x)
	-\prod_{i=1}^{N}\Psi( \hat{x}^{i}\mid \hat{x})\right|\leq \sum_{i\neq j}\frac{2\beta\left|W_{ij}\right|}{(e^{\frac{h_{0}}{2}}+e^{-\frac{h_{0}}{2}})^{2}}+\frac{2\beta\sum_{k\neq j}\left|W_{jk}\right|+2|h|}{(e^{\frac{h_{0}}{2}}+e^{-\frac{h_{0}}{2}})^{2}}\\
    &=\frac{2\beta(\sum_{i\neq j}|W_{ij}|+\sum_{k\neq j}|W_{jk}|)+2|h|}{(e^{\frac{|h|}{4}}+e^{-\frac{|h|}{4}})^{2}}\leq\frac{4|h|}{(e^{\frac{|h|}{4}}+e^{-\frac{|h|}{4}})^{2}}.
\end{align*}
Finally,
\begin{align*}
	\Vert C\Vert_{\infty}=\sup_{i}\sum_{j\in [N]}c_{i}(j)\leq \frac{4|h|N}{(e^{\frac{|h|}{4}}+e^{-\frac{|h|}{4}})^{2}}.
\end{align*}
Combine with Lemma \ref{lemmaA1}, we have
\begin{align*}
	\text{Var}_{\nu_{\beta,h}}(f)\leq \frac{1}{1-\frac{4N|h|}{(e^{\frac{|h|}{4}}+e^{-\frac{|h|}{4}})^{2}}}\mathcal{E}(f,f).
\end{align*}

\subsection{Chernoff-type Error Bounds for DMCMC Samplers}\label{sec:B.7}

This section details the derivation of Chernoff-type error bounds for the Monte Carlo estimators based on DMCMC samplers, expanding upon the theoretical results presented in Section \ref{sec: theoretical res.}. These bounds quantify the concentration of the sample average around the true posterior mean. For a sequence of samples $\{X_1, X_2, \ldots, X_n\}$ generated by a DMCMC chain, we establish the following corollary:

\begin{corollary}
Let $X_1 \sim q$ denote the initial distribution of the samples. Under the assumption that the Gibbs measure \eqref{eq:nu} satisfies the large field Condition \ref{assump1}, for any $\varepsilon>0$, the following Chernoff-type error bound holds:
\[
    P_{q}\left[\left|\frac{1}{n}\sum_{i=1}^nX_i-\mathbb{E}_{\nu_{\beta,h}} [X]\right|\ge\varepsilon\right]\leq C_{\gamma_{\text{gap}},n,q}e^{-\frac{n\varepsilon^2\gamma_{\text{gap}}}{c}},
\]
where $c$ is an absolute constant, $C_{\gamma_{\text{gap}},n,q}$ is a rational function, and the spectral gap $\gamma_{\text{gap}}$ is specific to each DMCMC algorithm:
\begin{itemize}
    \item For classical Metropolis chains, $\gamma_{\text{gap}}=1-2|h|e^{-|h|}$;
    \item For the single-site gradient-informed MH algorithm, $\gamma_{\text{gap}}=1-\frac{|h|}{(e^{\frac{|h|}{4}}+e^{-\frac{|h|}{4}})^{2}}$;
    \item For DULA, $\gamma_{\text{gap}}=1-\frac{4|h|N}{(e^{\frac{|h|}{4}}+e^{-\frac{|h|}{4}})^{2}}$.
\end{itemize}
\end{corollary}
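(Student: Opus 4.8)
The plan is to mirror the proof of Corollary~\ref{cor4.4} for Glauber dynamics, reducing each of the three cases to the generic concentration bound for Markov chains (Lemma~\ref{lemma:C1}) and supplying the algorithm-specific spectral gap from the corresponding Poincar\'e inequality. Lemma~\ref{lemma:C1} guarantees the stated Chernoff-type bound for any irreducible, reversible chain on a finite state space with invariant law $\nu_{\beta,h}$, spectral gap $\gamma$, and bounded test function. The task therefore splits into two ingredients: (i) checking that the classical Metropolis chain, the single-site gradient-informed MH sampler, and DULA are each irreducible and reversible with respect to $\nu_{\beta,h}$, and (ii) identifying the spectral gap of each chain with the Poincar\'e coefficient already derived in Theorems~\ref{thm:classmh}, \ref{thm:1site}, and~\ref{thm:product}.

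For ingredient (i), irreducibility on the finite hypercube $\{-1,1\}^N$ is immediate, since each chain assigns positive probability to the single-site flips (or, for DULA, to the full product proposal) needed to connect any two configurations. For the two Metropolis-adjusted samplers, reversibility with respect to $\nu_{\beta,h}$ holds by construction, as the acceptance rule enforces detailed balance. For ingredient (ii), I would use that the generator $\mathcal{L}=P-I$ of each chain satisfies $\mathcal{E}(f,f)=-\langle f,\mathcal{L}f\rangle_{\nu_{\beta,h}}$, so a Poincar\'e inequality $\mathrm{Var}_{\nu_{\beta,h}}(f)\le \gamma_{\text{gap}}^{-1}\,\mathcal{E}(f,f)$ certifies that the spectral gap of the one-step kernel $P$ is at least $\gamma_{\text{gap}}$. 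Reading off the coefficients then yields $\gamma_{\text{gap}}=1-2|h|e^{-|h|}$, $\gamma_{\text{gap}}=1-|h|/(e^{|h|/4}+e^{-|h|/4})^2$, and $\gamma_{\text{gap}}=1-4|h|N/(e^{|h|/4}+e^{-|h|/4})^2$, each positive under Condition~\ref{assump1}. Since the samples $X_i$ are vector-valued in $\{-1,1\}^N$, I would apply Lemma~\ref{lemma:C1} coordinatewise to each bounded test function $f(x)=x_k$ and combine the $N$ resulting tail bounds by a union bound, which only rescales $\varepsilon$ and inflates the prefactor $C_{\gamma_{\text{gap}},n,q}$.

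The main obstacle is the reversibility requirement in the DULA case. Because DULA is the \emph{unadjusted} Langevin sampler and carries no Metropolis correction, its product kernel~\eqref{eq:multi kernel} need not satisfy detailed balance against $\nu_{\beta,h}$, and its exact stationary law may differ from $\nu_{\beta,h}$; consequently Lemma~\ref{lemma:C1} does not apply verbatim. I would address this by working with the symmetric Dirichlet form already used in Theorem~\ref{thm:product}, which encodes the reversible structure against which the Poincar\'e inequality was established, and then either restrict the statement to the reversibilization of the DULA kernel or invoke a Chernoff bound for non-reversible chains phrased in terms of the additive symmetrization (e.g.\ via the pseudo-spectral gap). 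The delicate point is to confirm that the relevant symmetrized gap still equals the coefficient of Theorem~\ref{thm:product}; once this is settled, the concentration estimate follows, while the two Metropolis-adjusted cases reproduce the Glauber template of Corollary~\ref{cor4.4} essentially verbatim.
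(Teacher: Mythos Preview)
Your approach is essentially the paper's: verify irreducibility and reversibility, then invoke Lemma~\ref{lemma:C1} with the spectral gap supplied by the relevant Poincar\'e inequality. For the classical Metropolis chain and the single-site gradient-informed MH sampler this matches the paper verbatim. Your extra step of applying Lemma~\ref{lemma:C1} coordinatewise and taking a union bound is more careful than the paper, which simply writes the bound for the vector-valued $X_i$ without comment.

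Your concern about DULA is well taken, and in fact the paper does not resolve it either. The paper's argument reads: ``A Metropolis--Hastings kernel \ldots\ is reversible with respect to $\nu_{\beta,h}$ \ldots\ these properties are precisely the conditions required for Lemma~\ref{lemma:C1} to apply,'' and then substitutes the DULA gap from Theorem~\ref{thm:product} as if DULA were an MH chain. It never addresses that DULA carries no acceptance step and hence need not be reversible with stationary law $\nu_{\beta,h}$, nor that the Dirichlet form in Theorem~\ref{thm:product} is computed against $\nu_{\beta,h}$ rather than against DULA's actual invariant measure. So the obstacle you flag is not a defect in your proposal relative to the paper; it is a gap the paper itself leaves open. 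Your suggested fixes (working with the additive reversibilization or invoking a pseudo-spectral-gap version of the Chernoff bound) go beyond what the paper provides.
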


\begin{proof}
Our approach leverages general results on the concentration of measure for Markov chains. A Metropolis-Hastings (MH) kernel $P(x'| x)$ is typically defined by:
\[
    P(x'| x)=\Psi(x'| x)\min\left\{1,\frac{\nu_{\beta,h}(x')\Psi(x| x')}{\nu_{\beta,h}(x)\Psi(x'| x)}\right\},
\]
where $\Psi(\cdot|\cdot)$ represents the proposal distribution. A key property of these MH kernels is their reversibility with respect to the target Gibbs measure $\nu_{\beta,h}$. Furthermore, given that we are sampling from a finite state space $\{-1,1\}^N$, these MH algorithms are inherently irreducible.

These properties of reversibility and irreducibility are precisely the conditions required for Lemma \ref{lemma:C1} to apply. Lemma \ref{lemma:C1} states that for a reversible and irreducible Markov chain, a Chernoff-type error bound of the form
\[
    P_{q}\left[\left|\frac{1}{n}\sum_{i=1}^nX_i-\mathbb{E}_{\nu_{\beta,h}} [X]\right|\ge\varepsilon\right]\leq C_{\gamma_{\text{gap}},n,q}e^{-\frac{n\varepsilon^2\gamma_{\text{gap}}}{c}}
\]
holds, where $\gamma_{\text{gap}}$ is the spectral gap of the underlying Markov chain.

Finally, by incorporating the specific spectral gap calculations derived for each DMCMC variant—classical Metropolis chains (Theorem \ref{thm:classmh}), single-site gradient-informed MH (Theorem \ref{thm:1site}), and DULA (Theorem \ref{thm:product})—we substitute their respective $\gamma_{\text{gap}}$ values into this general bound, thus completing the proof.
\end{proof}

%\subsection{Proof of theorem }

\section{Experimental Miscellaneous}
In this section, we first benchamrk the QUBO instances in section \ref{app: qubo} and provide detailed experimental settings for reproducibility in Section \ref{app: exp setting}. We then present additional experimental results in Section \ref{app: exp res} to further validate our framework's effectiveness. Finally, in Section \ref{app: ablation}, we conduct comprehensive ablation studies on various components of our framework, such as the $\alpha$-schedule in SL and the influence of hyperparameters on sampling performance and convergence properties.

\subsection{QUBO Instances}\label{app: qubo}
In this section, we describe the form of Binary Quadratic Distributions for the QUBO instances used in our experiments. For instances with $x\in\{0,1\}^N$, we transform them to $\{-1,1\}^N$ through the mapping $y=\frac{x+1}{2}$ in our experimental setup.

\paragraph{Maximum Independent Set} Given a graph $G=(V,E)$ with $N$ nodes, the Maximum Independent Set (MIS) problem can be formulated as:
$$
\max_{x\in{0,1}^N} \sum_{i=1}^N -c_i x_i \text{ subject to } x_i x_j = 0 \text{ for all } (i,j)\in E
$$
which can be transformed into a BQD:
$$
p(x)\propto e^{-\frac{\beta}{2}\left(-c^Tx + \lambda \frac{x^TAx}{2}\right)}
$$
where $A$ is the adjacency matrix, $\beta$ is the inverse temperature and $\lambda$ is the penalty coefficient ensuring constraint satisfaction. Following DISCS \citep{goshvadi2024discs}, we set $c=1$, $\lambda=1.0001$ and use their annealing schedule and post-processing method to obtain feasible solutions. The evaluation is conducted on five datasets: four from Erdős-Rényi random graphs ER-[700-800] with densities \{0.05, 0.10, 0.15, 0.25\} and one from SATLIB.

\paragraph{Maximum Cut} The Maximum Cut (MaxCut) problem on a graph $G=(V,E)$ with $D$ nodes can be formulated as:
$$
\max_{x\in\{-1,1\}^N} \sum_{(i,j)\in E} w_{ij}(\frac{1-x_ix_j}{2})
$$
which corresponds to the following BQD:
$$
p(x)\propto e^{-\beta\left(\sum_{(i,j)\in E} w_{ij}(\frac{1-x_ix_j}{2})\right)}
$$
where $w_{ij}=1$ in the experiments. Following DISCS \citep{goshvadi2024discs}, we evaluate on seven datasets: three from different sizes of Erdős-Rényi random graphs with the density 0.15 (ER-256-300, ER-512-600, ER-1024-1100), three from different sizes of Barabási-Albert random graphs (ba-256-300, ba-512-600, ba-1024-1100), and one from Optsicom.

\paragraph{Maximum Clique} The Maximum Clique problem on a graph $G=(V,E)$ with $D$ nodes can be formulated as:
$$
\min_{x\in\{0,1\}^N} -\sum_{i=1}^N c_ix_i \text{ subject to } x_ix_j = 0 \text{ for all } (i,j)\notin E
$$
which corresponds to the following BQD:
$$
p(x)\propto e^{-\beta\left(-c^Tx + \frac{\lambda}{2}(\mathbf{1}^Tx\cdot(\mathbf{1}^Tx-\mathbf{1}) - x^TAx)\right)}
$$
Following DISCS \citep{goshvadi2024discs}, we set $c=1$, $\lambda=1.0001$ and maintain the same post-processing method. We evaluate on two datasets: Twitter and RB.

\subsection{Experimental Setting}\label{app: exp setting}
In this part, we provide detailed experimental settings for the configuration of three key components in our framework: the discrete MCMC sampler settings, the Stochastic Localization algorithm parameters, and the exponential decay step allocation strategy.

\paragraph{Discrete MCMC Sampler Settings} We employ three different discrete MCMC samplers in our experiments: Gibbs with Gradients (GWG) \citep{grathwohl2021oops} , Path Auxiliary Sampler (PAS) \citep{sun2021path}, and Discrete Metropolis-Adjusted Langevin Algorithm (DMALA) \citep{zhang2022langevin}, for all samplers, we use adaptive step sizes with a target acceptance rate of 0.574 and balancing function of $g(t) = \sqrt{t}$. The specific configurations are as follows:
For GWG and PAS, we configure it to flip one bit at each step.
For DMALA, we set the initial step size to 0.2, schedule the step size adjustment every 100 steps, and reset the gradient estimate every 20 steps.

\paragraph{Exponential Decay Step Allocation}
Given the total number of SL SDE iterations $K$ and total MCMC steps $N_{\text{tot}}$ (10,000 in our work), we need to allocate $N_t$ MCMC steps for posterior estimation at each time step $t$, ensuring $\sum_{t=1}^K N_t = N_{\text{tot}}$. Our exponential decay allocation strategy works as follows: First, we assign a minimum number of steps $N_{\text{min}}$ to each time step. Then, we distribute the remaining steps $N_{\text{tot}} - KN_{\text{min}}$ exponentially according to $N_t = N_{\text{min}} + c \cdot r^{t/K}$, where $r$ is the decay rate and $c$ is a normalization constant ensuring the sum constraint. To handle discretization effects, we floor the continuous schedule to integers and carefully distribute the remaining steps to maintain the exact total.

\paragraph{SL Settings} 
For the alpha-schedule in Stochastic Localization, we adopt the GEOM(2,1) schedule $\alpha(t)=t(1-t)^{-1/2}$ as suggested in prior works. We use uniform time discretization and employ exponential integrator for SDE integration. When performing posterior estimation in SL, we use the same MCMC sampler configurations as in their standalone sampling counterparts. Other hyperparameters are determined through extensive hyperparameter optimization. Specifically, we tune the following parameters: the number of discretization steps $K\in\{256,512,1024\}$, initial and final noise scales $\epsilon,\epsilon_{\text{end}}\in[10^{-4},10^{-1}]$, MCMC sample ratio (proportion of last samples used for posterior estimation) in $[0.1,1]$ with step 0.1, MCMC step exponential decay rate $r$ in $[10^{-4},10^{-1}]$, minimum MCMC steps $N_\text{min}$ $\in\{2,4,6,8,16,32\}$, and noise parameter $ \sigma\in\{1,2,4,6,8,10,15,20\}$. The total number of MCMC steps is fixed at 10,000. The optimal hyperparameters found through our search will be released with our open-source code.

All experiments are conducted on a single NVIDIA A40 GPU and an AMD EPYC 7513 CPU. The code is available at \href{https://github.com/LOGO-CUHKSZ/SLDMCMC}{https://github.com/LOGO-CUHKSZ/SLDMCMC}.

\subsection{Further Experimental Results}\label{app: exp res}

\begin{figure}[ht]
% \vskip 0.2in
\begin{center}
\centerline{\includegraphics[width=.75\textwidth]{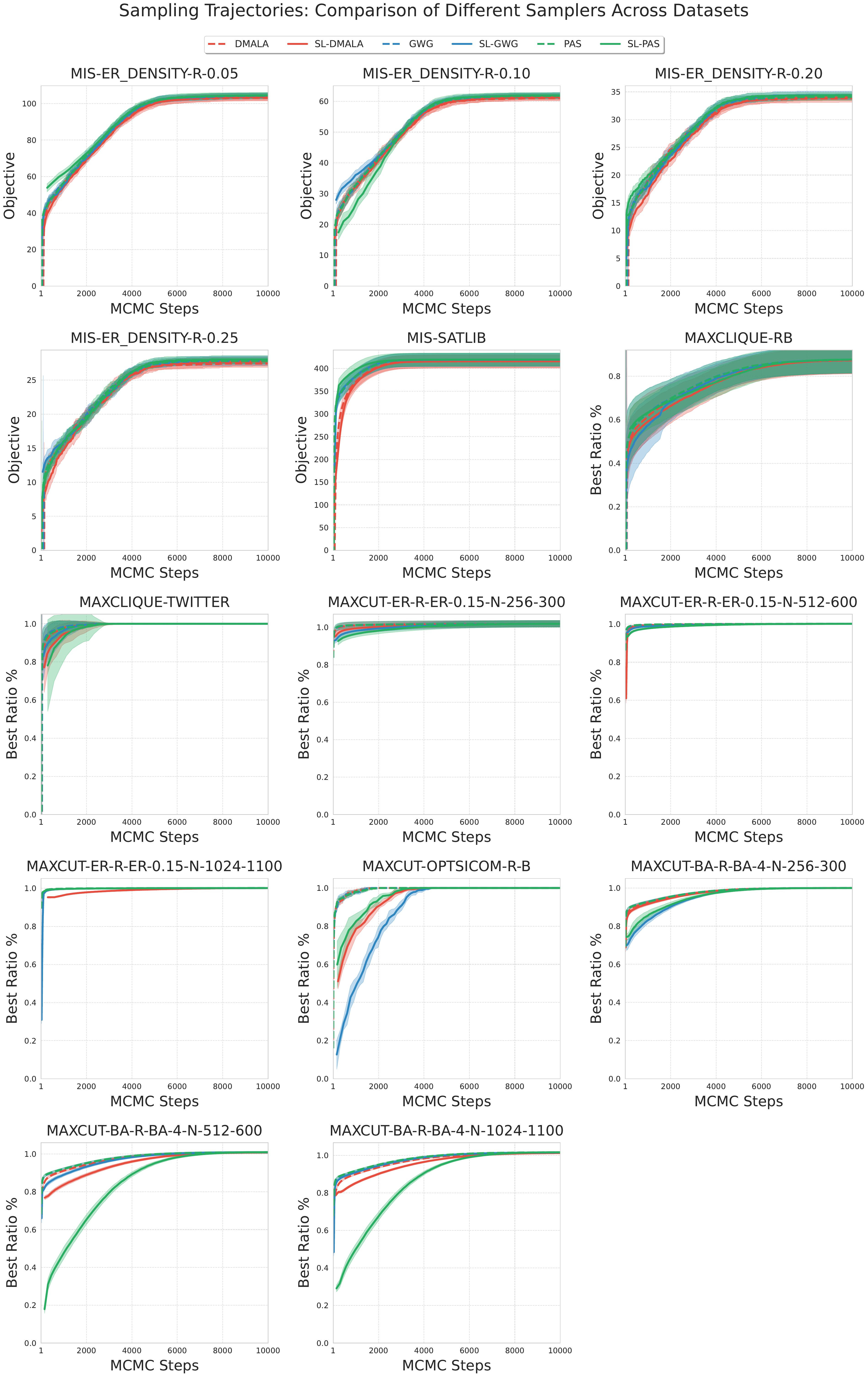}}
\caption{Sampling trajectories comparison between MCMC samplers (dash lines) and their SL-based counterparts (solid lines) across different datasets. The shaded areas in the trajectories, representing the variance across multiple instances}
\label{fig: sample traj}
\end{center}
% \vskip -0.2in
\end{figure}

To provide a more comprehensive understanding of our framework's behavior, we visualize the sampling trajectories for both the base MCMC samplers (DMALA, GWG, and PAS) and the SL-enhanced framework across different test cases. The shaded areas in the trajectories, representing the variance across multiple instances, are generally narrow, indicating the stability of all methods. Most notably, SL-based samplers (solid lines) and their base MCMC counterparts (dashed lines) achieve comparable final performance across all test cases, demonstrating the reliability of both approaches.

% For MIS problems (ER DENSITY and SATLIB), all methods show steady convergence, with SL-enhanced samplers (solid lines) generally achieving faster initial progress compared to their MCMC counterparts (dashed lines). This advantage is particularly noticeable in the early stages of sampling (first 2000 steps).

% In MaxClique problems, we observe more distinct behavioral differences. For MaxClique-RB, SL-enhanced methods demonstrate clearly superior convergence rates, with the performance gap being most pronounced in the middle phase of sampling (2000-6000 steps). The MaxClique-TWITTER case shows rapid initial convergence for all methods, suggesting it might be a relatively easier instance.
% MaxCut problems reveal interesting patterns based on their graph structures. For ER graphs, all methods achieve near-optimal solutions quickly, while BA instances show more diverse behaviors. Notably, in larger BA instances (512 and 1024 nodes), SL-PAS exhibits slower initial progress but eventually catches up.

\subsection{Further Ablations}\label{app: ablation}
Based on the comprehensive ablation experiments shown in Tables \ref{tab: abla MIS ER-0.05_MIS ER-0.10} - \ref{tab: abla MaxCut BA 512-600_maxcut-ba-r-ba-4-n-1024-1100}, we analyze the impact of three key components in SL: the $\alpha$-schedule, number of SL iterations K, and noise parameter $\sigma$. 

First, regarding the $\alpha$-schedule, we adopt the geometric schedule GEOM($\alpha_1$, $\alpha_2$) from SLIPS \citep{DBLP:conf/icml/GreniouxNGD24}, defined as GEOM($\alpha_1$, $\alpha_2$)($t$) = $t^{\frac{\alpha_1}{2}}(1-t)^{-\frac{\alpha_2}{2}}$, where $t \in [0,1]$ and $\alpha_1 \geq 1$, $\alpha_2 > 0$. Both GEOM(1,1) and GEOM(2,1) demonstrate superior performance over the CLASSIC schedule (defined as $\alpha(t) = t$) in MIS problems. However, this impact becomes negligible in MaxCut instances, where all schedules perform similarly with variations within 0.1\%.

Second, the number of SL iterations K shows a problem-dependent sensitivity pattern. While K exhibits significant influence on performance in MIS tasks, its impact becomes minimal in MaxCut problems where the performance differences among K=256, 512, and 1024 are statistically insignificant ($\le$0.01\% variation).

Third, the noise parameter $\sigma$ demonstrates similar problem-specific characteristics. In MIS  instances, moderate noise levels ($\sigma=5$) generally yield optimal performance with notable improvements over other settings. However, in MaxCut problems, the choice of $\sigma$ has minimal impact on the final performance, with variations typically less than 0.05\%.

When comparing methods across problem types, SL-PAS exhibits superior performance in MIS tasks compared to SL-GWG and SL-DMALA. However, this advantage diminishes in MaxCut instances, where all three methods achieve nearly identical results. 

\begin{table*}[t!]
\centering
\caption{Results of ablative experiments for MIS ER-0.05 and MIS ER-0.10.}
\label{tab: abla MIS ER-0.05_MIS ER-0.10}
\footnotesize % Overall table font size
\begin{minipage}[t]{0.48\textwidth}
\centering
\begin{tabularx}{\linewidth}{l|XXX} % tabularx with X columns for flexibility
\toprule
Ablation & SL-GWG & SL-PAS & SL-DMALA \\
\midrule
% Results for alpha
CLASSIC & 103.69 \scriptsize{$\pm$ 1.57} & 104.31 \scriptsize{$\pm$ 1.79} & 103.06 \scriptsize{$\pm$ 1.82} \\
GEOM(1,1) & 103.97 \scriptsize{$\pm$ 1.55} & 104.19 \scriptsize{$\pm$ 1.67} & 103.41 \scriptsize{$\pm$ 1.85} \\
GEOM(2,1) & 104.37 \scriptsize{$\pm$ 1.52} & 104.53 \scriptsize{$\pm$ 1.48} & 103.59 \scriptsize{$\pm$ 1.80} \\
\midrule
% Results for K
K=256 & 104.19 \scriptsize{$\pm$ 1.65} & 104.53 \scriptsize{$\pm$ 1.48} & 103.34 \scriptsize{$\pm$ 1.57} \\
K=512 & 104.19 \scriptsize{$\pm$ 1.65} & 104.19 \scriptsize{$\pm$ 1.72} & 103.59 \scriptsize{$\pm$ 1.80} \\
K=1024 & 104.37 \scriptsize{$\pm$ 1.52} & 104.22 \scriptsize{$\pm$ 1.49} & 103.03 \scriptsize{$\pm$ 1.69} \\
\midrule
% Results for sigma
$\sigma$=1 & 103.25 \scriptsize{$\pm$ 1.70} & 104.16 \scriptsize{$\pm$ 1.42} & 103.19 \scriptsize{$\pm$ 1.74} \\
$\sigma$=5 & 103.81 \scriptsize{$\pm$ 1.63} & 104.22 \scriptsize{$\pm$ 1.67} & 103.34 \scriptsize{$\pm$ 1.61} \\
$\sigma$=10 & 104.00 \scriptsize{$\pm$ 1.54} & 104.34 \scriptsize{$\pm$ 1.81} & 103.28 \scriptsize{$\pm$ 1.64} \\
\bottomrule
\end{tabularx}
\end{minipage}
% \hfill % Ensures space between the two minipages
\hspace{3mm}
\begin{minipage}[t]{0.48\textwidth}
\centering
\begin{tabularx}{\linewidth}{l|XXX} % tabularx with X columns for flexibility
\toprule
Ablation & SL-GWG & SL-PAS & SL-DMALA \\
\midrule
% Results for alpha
CLASSIC & 60.25 \scriptsize{$\pm$ 0.79} & 61.63 \scriptsize{$\pm$ 0.96} & 60.94 \scriptsize{$\pm$ 1.06} \\
GEOM(1,1) & 61.47 \scriptsize{$\pm$ 1.06} & 61.75 \scriptsize{$\pm$ 0.83} & 60.88 \scriptsize{$\pm$ 0.96} \\
GEOM(2,1) & 61.94 \scriptsize{$\pm$ 0.93} & 61.91 \scriptsize{$\pm$ 0.88} & 61.34 \scriptsize{$\pm$ 0.96} \\
\midrule
% Results for K
K=256 & 61.41 \scriptsize{$\pm$ 0.82} & 61.91 \scriptsize{$\pm$ 0.88} & 60.78 \scriptsize{$\pm$ 0.99} \\
K=512 & 61.94 \scriptsize{$\pm$ 0.93} & 61.78 \scriptsize{$\pm$ 0.96} & 60.94 \scriptsize{$\pm$ 1.00} \\
K=1024 & 61.53 \scriptsize{$\pm$ 0.83} & 61.69 \scriptsize{$\pm$ 0.95} & 61.34 \scriptsize{$\pm$ 0.96} \\
\midrule
% Results for sigma
$\sigma$=1 & 61.47 \scriptsize{$\pm$ 0.83} & 61.69 \scriptsize{$\pm$ 0.77} & 61.09 \scriptsize{$\pm$ 1.16} \\
$\sigma$=5 & 61.69 \scriptsize{$\pm$ 1.04} & 61.69 \scriptsize{$\pm$ 0.88} & 60.81 \scriptsize{$\pm$ 1.04} \\
$\sigma$=10 & 61.66 \scriptsize{$\pm$ 0.92} & 61.50 \scriptsize{$\pm$ 0.83} & 61.22 \scriptsize{$\pm$ 1.05} \\
\bottomrule
\end{tabularx}
\end{minipage}
\vspace{-0.5cm}
\end{table*}

\begin{table*}[t!]
\centering
\caption{Results of ablative experiments for MIS ER-0.20 and MIS ER-0.25.}
\label{tab: abla MIS ER-0.20_MIS ER-0.25}
\footnotesize % Overall table font size
\begin{minipage}[t]{0.48\textwidth} % Adjusted minipage width
\centering
\begin{tabularx}{\linewidth}{l|XXX} % tabularx with X columns for flexibility
\toprule
Ablation & SL-GWG & SL-PAS & SL-DMALA \\
\midrule
% Results for alpha
CLASSIC & 34.06 \scriptsize{$\pm$ 0.61} & 33.91 \scriptsize{$\pm$ 0.58} & 33.03 \scriptsize{$\pm$ 0.77} \\
GEOM(1,1) & 34.09 \scriptsize{$\pm$ 0.63} & 34.06 \scriptsize{$\pm$ 0.50} & 33.97 \scriptsize{$\pm$ 0.64} \\
GEOM(2,1) & 34.38 \scriptsize{$\pm$ 0.74} & 34.38 \scriptsize{$\pm$ 0.54} & 34.00 \scriptsize{$\pm$ 0.56} \\
\midrule
% Results for K
K=256 & 34.19 \scriptsize{$\pm$ 0.73} & 34.25 \scriptsize{$\pm$ 0.61} & 33.84 \scriptsize{$\pm$ 0.62} \\
K=512 & 34.03 \scriptsize{$\pm$ 0.53} & 34.09 \scriptsize{$\pm$ 0.52} & 34.00 \scriptsize{$\pm$ 0.56} \\
K=1024 & 34.38 \scriptsize{$\pm$ 0.74} & 34.38 \scriptsize{$\pm$ 0.54} & 33.72 \scriptsize{$\pm$ 0.84} \\
\midrule
% Results for sigma
$\sigma$=1 & 33.72 \scriptsize{$\pm$ 0.57} & 33.69 \scriptsize{$\pm$ 0.58} & 33.84 \scriptsize{$\pm$ 0.67} \\
$\sigma$=5 & 34.22 \scriptsize{$\pm$ 0.74} & 34.00 \scriptsize{$\pm$ 0.66} & 33.81 \scriptsize{$\pm$ 0.68} \\
$\sigma$=10 & 34.00 \scriptsize{$\pm$ 0.43} & 34.22 \scriptsize{$\pm$ 0.54} & 33.88 \scriptsize{$\pm$ 0.65} \\
\bottomrule
\end{tabularx}
\end{minipage}
% \hfill % Ensures space between the two minipages
\hspace{3mm}
\begin{minipage}[t]{0.48\textwidth} % Adjusted minipage width
\centering
\begin{tabularx}{\linewidth}{l|XXX} % tabularx with X columns for flexibility
\toprule
Ablation & SL-GWG & SL-PAS & SL-DMALA \\
\midrule
% Results for alpha
CLASSIC & 27.88 \scriptsize{$\pm$ 0.54} & 26.50 \scriptsize{$\pm$ 0.56} & 27.31 \scriptsize{$\pm$ 0.58} \\
GEOM(1,1) & 27.56 \scriptsize{$\pm$ 0.50} & 27.84 \scriptsize{$\pm$ 0.51} & 27.44 \scriptsize{$\pm$ 0.50} \\
GEOM(2,1) & 28.00 \scriptsize{$\pm$ 0.56} & 28.03 \scriptsize{$\pm$ 0.59} & 27.75 \scriptsize{$\pm$ 0.66} \\
\midrule
% Results for K
K=256 & 27.84 \scriptsize{$\pm$ 0.57} & 27.94 \scriptsize{$\pm$ 0.50} & 27.41 \scriptsize{$\pm$ 0.55} \\
K=512 & 27.88 \scriptsize{$\pm$ 0.48} & 27.91 \scriptsize{$\pm$ 0.63} & 27.47 \scriptsize{$\pm$ 0.56} \\
K=1024 & 28.00 \scriptsize{$\pm$ 0.56} & 28.03 \scriptsize{$\pm$ 0.59} & 27.75 \scriptsize{$\pm$ 0.66} \\
\midrule
% Results for sigma
$\sigma$=1 & 27.81 \scriptsize{$\pm$ 0.53} & 27.72 \scriptsize{$\pm$ 0.51} & 27.06 \scriptsize{$\pm$ 0.50} \\
$\sigma$=5 & 27.72 \scriptsize{$\pm$ 0.57} & 27.94 \scriptsize{$\pm$ 0.43} & 27.47 \scriptsize{$\pm$ 0.50} \\
$\sigma$=10 & 27.88 \scriptsize{$\pm$ 0.65} & 27.94 \scriptsize{$\pm$ 0.61} & 27.75 \scriptsize{$\pm$ 0.66} \\
\bottomrule
\end{tabularx}
\end{minipage}
% \vspace{0.5cm} % Removed, as it's often not needed with table*
\end{table*}

\begin{table*}[t!]
\centering
\caption{Results of ablative experiments for MIS SATLib and MaxClique RB.}
\label{tab: abla mis-satlib_maxclique-rb}
\footnotesize % Overall table font size
\begin{minipage}[t]{0.50\textwidth} % Adjusted minipage width
\centering
\begin{tabularx}{\linewidth}{l|XXX} % tabularx with X columns for flexibility
\toprule
Ablation & SL-GWG & SL-PAS & SL-DMALA \\
\midrule
% Results for alpha
CLASSIC & 418.93 \scriptsize{$\pm$ 14.39} & 419.35 \scriptsize{$\pm$ 14.37} & 415.23 \scriptsize{$\pm$ 14.51} \\
GEOM(1,1) & 419.03 \scriptsize{$\pm$ 14.31} & 419.61 \scriptsize{$\pm$ 14.37} & 415.72 \scriptsize{$\pm$ 14.48} \\
GEOM(2,1) & 419.16 \scriptsize{$\pm$ 14.39} & 419.71 \scriptsize{$\pm$ 14.35} & 415.72 \scriptsize{$\pm$ 14.37} \\
\midrule
% Results for K
K=256 & 419.03 \scriptsize{$\pm$ 14.32} & 419.71 \scriptsize{$\pm$ 14.35} & 415.57 \scriptsize{$\pm$ 14.55} \\
K=512 & 419.16 \scriptsize{$\pm$ 14.39} & 419.62 \scriptsize{$\pm$ 14.37} & 415.72 \scriptsize{$\pm$ 14.37} \\
K=1024 & 419.03 \scriptsize{$\pm$ 14.45} & 419.59 \scriptsize{$\pm$ 14.35} & 415.66 \scriptsize{$\pm$ 14.52} \\
\midrule
% Results for sigma
$\sigma$=1 & 418.85 \scriptsize{$\pm$ 14.31} & 419.51 \scriptsize{$\pm$ 14.33} & 415.43 \scriptsize{$\pm$ 14.29} \\
$\sigma$=5 & 419.00 \scriptsize{$\pm$ 14.37} & 419.55 \scriptsize{$\pm$ 14.35} & 415.70 \scriptsize{$\pm$ 14.39} \\
$\sigma$=10 & 419.01 \scriptsize{$\pm$ 14.39} & 419.71 \scriptsize{$\pm$ 14.35} & 415.72 \scriptsize{$\pm$ 14.37} \\
\bottomrule
\end{tabularx}
\end{minipage}
% \hfill % Ensures space between the two minipages
\hspace{1mm}
\begin{minipage}[t]{0.48\textwidth} % Adjusted minipage width
\centering
\begin{tabularx}{\linewidth}{l|XXX} % tabularx with X columns for flexibility
\toprule
Ablation & SL-GWG & SL-PAS & SL-DMALA \\
\midrule
% Results for alpha
CLASSIC & 87.52 \scriptsize{$\pm$ 6.18} & 87.59 \scriptsize{$\pm$ 6.19} & 87.16 \scriptsize{$\pm$ 6.11} \\
GEOM(1,1) & 87.52 \scriptsize{$\pm$ 6.14} & 87.43 \scriptsize{$\pm$ 6.11} & 87.10 \scriptsize{$\pm$ 6.09} \\
GEOM(2,1) & 87.60 \scriptsize{$\pm$ 6.16} & 87.65 \scriptsize{$\pm$ 6.14} & 87.31 \scriptsize{$\pm$ 6.15} \\
\midrule
% Results for K
K=256 & 87.43 \scriptsize{$\pm$ 6.16} & 87.65 \scriptsize{$\pm$ 6.14} & 87.19 \scriptsize{$\pm$ 6.08} \\
K=512 & 87.42 \scriptsize{$\pm$ 6.09} & 87.41 \scriptsize{$\pm$ 6.13} & 87.18 \scriptsize{$\pm$ 6.06} \\
K=1024 & 87.60 \scriptsize{$\pm$ 6.16} & 87.56 \scriptsize{$\pm$ 6.17} & 87.31 \scriptsize{$\pm$ 6.15} \\
\midrule
% Results for sigma
$\sigma$=1 & 86.77 \scriptsize{$\pm$ 6.11} & 87.35 \scriptsize{$\pm$ 6.13} & 87.08 \scriptsize{$\pm$ 6.11} \\
$\sigma$=5 & 87.42 \scriptsize{$\pm$ 6.18} & 87.55 \scriptsize{$\pm$ 6.20} & 87.26 \scriptsize{$\pm$ 6.16} \\
$\sigma$=10 & 87.42 \scriptsize{$\pm$ 6.21} & 87.45 \scriptsize{$\pm$ 6.17} & 87.31 \scriptsize{$\pm$ 6.15} \\
\bottomrule
\end{tabularx}
\end{minipage}
% \vspace{0.5cm} % Removed for space optimization if not critically needed
\end{table*}

\begin{table*}[ht]
\centering
\caption{Results of ablative experiments for MaxCut ER 256-300 and MaxCut BA 256-300.}
\label{tab: abla maxcut-ba-r-ba-0.15-n-256-300_MaxCut ER 256-300}
\footnotesize % Overall table font size
\begin{minipage}[t]{0.48\textwidth} % Adjusted minipage width
\centering
\begin{tabularx}{\linewidth}{l|XXX} % tabularx with X columns for flexibility
\toprule
Ablation & SL-GWG & SL-PAS & SL-DMALA \\
\midrule
% Results for alpha
CLASSIC & 101.92 \scriptsize{$\pm$ 1.76} & 101.91 \scriptsize{$\pm$ 1.76} & 101.92 \scriptsize{$\pm$ 1.76} \\
GEOM(1,1) & 101.92 \scriptsize{$\pm$ 1.76} & 101.92 \scriptsize{$\pm$ 1.76} & 101.92 \scriptsize{$\pm$ 1.76} \\
GEOM(2,1) & 101.92 \scriptsize{$\pm$ 1.76} & 101.92 \scriptsize{$\pm$ 1.76} & 101.92 \scriptsize{$\pm$ 1.76} \\
\midrule
% Results for K
K=256 & 101.91 \scriptsize{$\pm$ 1.76} & 101.92 \scriptsize{$\pm$ 1.76} & 101.91 \scriptsize{$\pm$ 1.76} \\
K=512 & 101.92 \scriptsize{$\pm$ 1.76} & 101.92 \scriptsize{$\pm$ 1.76} & 101.92 \scriptsize{$\pm$ 1.76} \\
K=1024 & 101.92 \scriptsize{$\pm$ 1.76} & 101.92 \scriptsize{$\pm$ 1.76} & 101.92 \scriptsize{$\pm$ 1.76} \\
\midrule
% Results for sigma
$\sigma$=1 & 101.90 \scriptsize{$\pm$ 1.76} & 101.89 \scriptsize{$\pm$ 1.76} & 101.90 \scriptsize{$\pm$ 1.76} \\
$\sigma$=5 & 101.92 \scriptsize{$\pm$ 1.76} & 101.92 \scriptsize{$\pm$ 1.76} & 101.92 \scriptsize{$\pm$ 1.76} \\
$\sigma$=10 & 101.92 \scriptsize{$\pm$ 1.76} & 101.92 \scriptsize{$\pm$ 1.76} & 101.92 \scriptsize{$\pm$ 1.76} \\
\bottomrule
\end{tabularx}
\end{minipage}
% \hfill % Ensures space between the two minipages
\hspace{3mm}
\begin{minipage}[t]{0.48\textwidth} % Adjusted minipage width
\centering % Added \centering for consistency
\begin{tabularx}{\linewidth}{l|XXX} % tabularx with X columns for flexibility
\toprule
Ablation & SL-GWG & SL-PAS & SL-DMALA \\
\midrule
% Results for alpha
CLASSIC & 99.96 \scriptsize{$\pm$ 0.07} & 99.97 \scriptsize{$\pm$ 0.06} & 99.95 \scriptsize{$\pm$ 0.07} \\
GEOM(1,1) & 99.97 \scriptsize{$\pm$ 0.06} & 99.97 \scriptsize{$\pm$ 0.06} & 99.95 \scriptsize{$\pm$ 0.08} \\
GEOM(2,1) & 99.97 \scriptsize{$\pm$ 0.05} & 99.98 \scriptsize{$\pm$ 0.05} & 99.96 \scriptsize{$\pm$ 0.06} \\
\midrule
% Results for K
K=256 & 99.96 \scriptsize{$\pm$ 0.07} & 99.97 \scriptsize{$\pm$ 0.06} & 99.96 \scriptsize{$\pm$ 0.07} \\
K=512 & 99.97 \scriptsize{$\pm$ 0.06} & 99.97 \scriptsize{$\pm$ 0.06} & 99.96 \scriptsize{$\pm$ 0.07} \\
K=1024 & 99.97 \scriptsize{$\pm$ 0.06} & 99.97 \scriptsize{$\pm$ 0.05} & 99.96 \scriptsize{$\pm$ 0.06} \\
\midrule
% Results for sigma
$\sigma$=1 & 99.94 \scriptsize{$\pm$ 0.08} & 99.95 \scriptsize{$\pm$ 0.07} & 99.81 \scriptsize{$\pm$ 0.12} \\
$\sigma$=5 & 99.97 \scriptsize{$\pm$ 0.06} & 99.97 \scriptsize{$\pm$ 0.05} & 99.95 \scriptsize{$\pm$ 0.07} \\
$\sigma$=10 & 99.97 \scriptsize{$\pm$ 0.06} & 99.97 \scriptsize{$\pm$ 0.06} & 99.96 \scriptsize{$\pm$ 0.07} \\
\bottomrule
\end{tabularx}
\end{minipage}
\vspace{-0.5cm}
\end{table*}

\begin{table*}[ht]
\centering
\caption{Results of ablative experiments for MaxCut ER 512-600 and MaxCut ER 1024-1100.}
\label{tab: abla MaxCut ER 512-600_MaxCut ER 1024-1100}
\footnotesize % Overall table font size
\begin{minipage}[t]{0.48\textwidth} % Adjusted minipage width
\centering
\begin{tabularx}{\linewidth}{l|XXX} % tabularx with X columns for flexibility
\toprule
Ablation & SL-GWG & SL-PAS & SL-DMALA \\
\midrule
% Results for alpha
CLASSIC & 100.16 \scriptsize{$\pm$ 0.12} & 100.17 \scriptsize{$\pm$ 0.12} & 100.14 \scriptsize{$\pm$ 0.13} \\
GEOM(1,1) & 100.16 \scriptsize{$\pm$ 0.12} & 100.17 \scriptsize{$\pm$ 0.12} & 100.14 \scriptsize{$\pm$ 0.13} \\
GEOM(2,1) & 100.16 \scriptsize{$\pm$ 0.12} & 100.17 \scriptsize{$\pm$ 0.12} & 100.14 \scriptsize{$\pm$ 0.12} \\
\midrule
% Results for K
K=256 & 100.15 \scriptsize{$\pm$ 0.12} & 100.16 \scriptsize{$\pm$ 0.12} & 100.14 \scriptsize{$\pm$ 0.12} \\
K=512 & 100.16 \scriptsize{$\pm$ 0.12} & 100.17 \scriptsize{$\pm$ 0.12} & 100.14 \scriptsize{$\pm$ 0.12} \\
K=1024 & 100.16 \scriptsize{$\pm$ 0.12} & 100.17 \scriptsize{$\pm$ 0.12} & 100.14 \scriptsize{$\pm$ 0.12} \\
\midrule
% Results for sigma
$\sigma$=1 & 100.15 \scriptsize{$\pm$ 0.12} & 100.16 \scriptsize{$\pm$ 0.12} & 100.11 \scriptsize{$\pm$ 0.13} \\
$\sigma$=5 & 100.16 \scriptsize{$\pm$ 0.12} & 100.17 \scriptsize{$\pm$ 0.12} & 100.14 \scriptsize{$\pm$ 0.13} \\
$\sigma$=10 & 100.16 \scriptsize{$\pm$ 0.12} & 100.17 \scriptsize{$\pm$ 0.12} & 100.14 \scriptsize{$\pm$ 0.13} \\
\bottomrule
\end{tabularx}
\end{minipage}
% \hfill % Ensures space between the two minipages
\hspace{3mm}
\begin{minipage}[t]{0.48\textwidth} % Adjusted minipage width
\centering
\begin{tabularx}{\linewidth}{l|XXX} % tabularx with X columns for flexibility
\toprule
Ablation & SL-GWG & SL-PAS & SL-DMALA \\
\midrule
% Results for alpha
CLASSIC & 100.08 \scriptsize{$\pm$ 0.14} & 100.12 \scriptsize{$\pm$ 0.14} & 100.06 \scriptsize{$\pm$ 0.14} \\
GEOM(1,1) & 100.10 \scriptsize{$\pm$ 0.14} & 100.12 \scriptsize{$\pm$ 0.13} & 100.07 \scriptsize{$\pm$ 0.14} \\
GEOM(2,1) & 100.10 \scriptsize{$\pm$ 0.13} & 100.12 \scriptsize{$\pm$ 0.14} & 100.08 \scriptsize{$\pm$ 0.14} \\
\midrule
% Results for K
K=256 & 100.09 \scriptsize{$\pm$ 0.14} & 100.12 \scriptsize{$\pm$ 0.14} & 100.08 \scriptsize{$\pm$ 0.14} \\
K=512 & 100.09 \scriptsize{$\pm$ 0.14} & 100.12 \scriptsize{$\pm$ 0.14} & 100.07 \scriptsize{$\pm$ 0.14} \\
K=1024 & 100.10 \scriptsize{$\pm$ 0.14} & 100.12 \scriptsize{$\pm$ 0.14} & 100.07 \scriptsize{$\pm$ 0.14} \\
\midrule
% Results for sigma
$\sigma$=1 & 100.07 \scriptsize{$\pm$ 0.14} & 100.11 \scriptsize{$\pm$ 0.13} & 100.07 \scriptsize{$\pm$ 0.14} \\
$\sigma$=5 & 100.10 \scriptsize{$\pm$ 0.14} & 100.12 \scriptsize{$\pm$ 0.13} & 100.07 \scriptsize{$\pm$ 0.14} \\
$\sigma$=10 & 100.09 \scriptsize{$\pm$ 0.14} & 100.12 \scriptsize{$\pm$ 0.14} & 100.08 \scriptsize{$\pm$ 0.14} \\
\bottomrule
\end{tabularx}
\end{minipage}
\vspace{-0.5cm} % Removed for space optimization if not critically needed
\end{table*}

\begin{table*}[ht]
\centering
\caption{Results of ablative experiments for MaxCut BA 512-600 and MaxCut BA 1024-1100.}
\label{tab: abla MaxCut BA 512-600_maxcut-ba-r-ba-4-n-1024-1100}
\footnotesize % Main font size for the table body
\begin{minipage}[t]{0.48\textwidth}
%MaxCut BA 512-600 results:
\centering
\begin{tabularx}{\linewidth}{l|XXX}
\toprule
Ablation & SL-GWG & SL-PAS & SL-DMALA \\
\midrule
% Results for alpha
CLASSIC & 100.87 \scriptsize{$\pm$ 0.48} & 100.92 \scriptsize{$\pm$ 0.48} & 100.77 \scriptsize{$\pm$ 0.49} \\
GEOM(1,1) & 100.88 \scriptsize{$\pm$ 0.48} & 100.92 \scriptsize{$\pm$ 0.48} & 100.78 \scriptsize{$\pm$ 0.48} \\
GEOM(2,1) & 100.89 \scriptsize{$\pm$ 0.49} & 100.92 \scriptsize{$\pm$ 0.48} & 100.79 \scriptsize{$\pm$ 0.49} \\
\midrule
% Results for K
K=256 & 100.87 \scriptsize{$\pm$ 0.49} & 100.92 \scriptsize{$\pm$ 0.48} & 100.78 \scriptsize{$\pm$ 0.49} \\
K=512 & 100.88 \scriptsize{$\pm$ 0.49} & 100.92 \scriptsize{$\pm$ 0.48} & 100.79 \scriptsize{$\pm$ 0.49} \\
K=1024 & 100.89 \scriptsize{$\pm$ 0.49} & 100.92 \scriptsize{$\pm$ 0.48} & 100.78 \scriptsize{$\pm$ 0.49} \\
\midrule
% Results for sigma
$\sigma$=1 & 100.77 \scriptsize{$\pm$ 0.49} & 100.90 \scriptsize{$\pm$ 0.48} & 100.76 \scriptsize{$\pm$ 0.49} \\
$\sigma$=5 & 100.88 \scriptsize{$\pm$ 0.48} & 100.92 \scriptsize{$\pm$ 0.49} & 100.78 \scriptsize{$\pm$ 0.49} \\
$\sigma$=10 & 100.88 \scriptsize{$\pm$ 0.49} & 100.92 \scriptsize{$\pm$ 0.48} & 100.78 \scriptsize{$\pm$ 0.49} \\
\bottomrule
\end{tabularx}
\end{minipage}
% \hfill % Consider un-commenting \hfill if the minipages are too close or remove some width
\hspace{3mm}
\begin{minipage}[t]{0.48\textwidth}
%MaxCut BA 1024-1100 results:
\centering
\begin{tabularx}{\linewidth}{l|XXX}
\toprule
Ablation & SL-GWG & SL-PAS & SL-DMALA \\
\midrule
% Results for alpha
CLASSIC & 101.53 \scriptsize{$\pm$ 0.38} & 101.66 \scriptsize{$\pm$ 0.38} & 101.21 \scriptsize{$\pm$ 0.39} \\
GEOM(1,1) & 101.56 \scriptsize{$\pm$ 0.37} & 101.66 \scriptsize{$\pm$ 0.37} & 101.23 \scriptsize{$\pm$ 0.39} \\
GEOM(2,1) & 101.55 \scriptsize{$\pm$ 0.37} & 101.66 \scriptsize{$\pm$ 0.37} & 101.23 \scriptsize{$\pm$ 0.38} \\
\midrule
% Results for K
K=256 & 101.55 \scriptsize{$\pm$ 0.38} & 101.66 \scriptsize{$\pm$ 0.37} & 101.22 \scriptsize{$\pm$ 0.38} \\
K=512 & 101.56 \scriptsize{$\pm$ 0.38} & 101.66 \scriptsize{$\pm$ 0.37} & 101.23 \scriptsize{$\pm$ 0.38} \\
K=1024 & 101.56 \scriptsize{$\pm$ 0.38} & 101.67 \scriptsize{$\pm$ 0.37} & 101.21 \scriptsize{$\pm$ 0.39} \\
\midrule
% Results for sigma
$\sigma$=1 & 101.31 \scriptsize{$\pm$ 0.37} & 101.63 \scriptsize{$\pm$ 0.37} & 101.16 \scriptsize{$\pm$ 0.39} \\
$\sigma$=5 & 101.55 \scriptsize{$\pm$ 0.37} & 101.66 \scriptsize{$\pm$ 0.37} & 101.22 \scriptsize{$\pm$ 0.39} \\
$\sigma$=10 & 101.55 \scriptsize{$\pm$ 0.37} & 101.67 \scriptsize{$\pm$ 0.37} & 101.22 \scriptsize{$\pm$ 0.39} \\
\bottomrule
\end{tabularx}
\end{minipage}
\vspace{-0.5cm}
\end{table*}

\section{Computational Complexity Analysis}
\label{app: complexity}

We analyze the computational complexity of the Stochastic Localization (SL) framework combined with Discrete MCMC (DMCMC) samplers, compared to using DMCMC alone. We consider the total operational cost in terms of fundamental operations like function evaluations, pseudo-gradient calculations, or matrix operations, as these often dominate the runtime for the target distributions we consider.
Let $N$ be the dimension of the problem (number of binary variables), $T$ be the total number of SL iterations, and $M$ be the total budget of DMCMC steps across all SL iterations.

\subsection{Baseline DMCMC Complexity}
For standard DMCMC methods used as baselines in our study, such as Gibbs With Gradients (GWG)~\citep{grathwohl2021oops}, Path Auxiliary Sampler (PAS)~\citep{sun2021path}, and Discrete Metropolis-adjusted Langevin Algorithm (DMALA)~\citep{zhang2022langevin}, the dominant computational cost per MCMC step typically involves evaluating the target distribution's pseudo-gradient or related quantities. For many relevant models, including the Binary Quadratic Distribution (BQD) studied in this paper, these operations scale at least quadratically with the dimension, $O(N^2)$. Given a total budget of $M$ MCMC steps, the overall complexity of a baseline DMCMC method is thus $O(MN^2)$.

\subsection{SL + DMCMC Complexity}
When using SL with DMCMC, the total MCMC budget $M$ is distributed across $T$ SL iterations. The core cost associated with performing $M$ DMCMC steps remains $O(MN^2)$ in total. In addition to the MCMC sampling, each of the $T$ SL iterations involves a few key operations: \textbf{(1)}  MC estimation of the conditional mean: This step involves drawing samples from the current posterior distribution and computing their mean. If we draw a fixed, relatively small number of samples per SL iteration, this calculation takes $O(N)$ time per iteration as it involves vector operations in $\mathbb{R}^N$. \textbf{(2)}  SDE simulation: Updating the localization variable $Y_t^\alpha$ according to the SDE involves vector additions and scaling, which takes $O(N)$ time per iteration.
% \begin{itemize}
%     \item MC estimation of the conditional mean: This step involves drawing samples from the current posterior distribution and computing their mean. If we draw a fixed, relatively small number of samples per SL iteration, this calculation takes $O(N)$ time per iteration as it involves vector operations in $\mathbb{R}^N$.
%     \item SDE simulation: Updating the localization variable $Y_t^\alpha$ according to the SDE involves vector additions and scaling, which takes $O(N)$ time per iteration.
% \end{itemize}
The total computational overhead introduced by the SL process itself, over $T$ iterations, is therefore $O(T \times (N + N)) = O(TN)$.
Combining the cost of the total MCMC steps and the SL overhead, the total computational complexity of the SL+DMCMC framework is $O(MN^2 + TN)$.

\subsection{Comparison}
Comparing the total complexity $O(MN^2 + TN)$ with the baseline $O(MN^2)$, the SL framework adds an $O(TN)$ overhead. In the high-dimensional settings where advanced MCMC methods are typically applied ($N$ is large), and with practical choices for the number of SL iterations $T$ relative to the total MCMC budget $M$, this overhead is often negligible. For example, in our experiments, we used $T \in \{256, 512, 1024\}$ while the total MCMC steps $M$ went up to $10,000$. In such scenarios, $TN$ is much smaller than $MN^2$.
Thus, the SL framework allows for potential improvements in mixing and convergence properties by transforming the target distribution towards an easier-to-sample form over time, without introducing a prohibitive increase in computational cost compared to the baseline DMCMC methods.

To validate our theoretical complexity analysis, we measured the actual running times across various benchmark problems, as shown in Tables~\ref{tab:mis_maxclique_time} and~\ref{tab:maxcut_time}. The results largely confirm our theoretical expectations. For MIS problems, the SL variants show a small overhead (typically 1-6\%) compared to their base counterparts. This slight increase is consistent with our complexity analysis predicting an additional $O(TN)$ term. In some instances (e.g., MaxClique RB and TWITTER, and some larger MaxCut instances), the SL framework appears marginally faster than the base methods. However, these apparent improvements should be attributed to hardware-specific factors, implementation details, and measurement variability. Such minor variations are common in empirical runtime measurements and do not contradict our theoretical analysis. Overall, the empirical results demonstrate that the SL framework introduces only marginal computational overhead in practice, which aligns with our theoretical analysis where $TN \ll MN^2$ in high-dimensional settings. 

\begin{table*}[ht]
% \vskip -0.1in
\caption{Running time (in seconds) on MIS and MaxClique benchmarks for various methods and their SL variants. Bold values indicate faster performance between paired methods.}
\label{tab:mis_maxclique_time}
\centering
\resizebox{.8\textwidth}{!}{
\begin{tabular}{l|cccc|c|cc}
\toprule
\multicolumn{1}{c|}{} & \multicolumn{5}{c|}{{MIS}} & \multicolumn{2}{c}{{MaxClique}} \\
\textbf{Method} & {ER-0.05} & {ER-0.10} & {ER-0.20} & {ER-0.25} & {SATLIB} & {RB} & {TWITTER} \\
\midrule
DMALA         & \textbf{23.873} & \textbf{35.511} & \textbf{77.866} & \textbf{103.636} & \textbf{188.901} & 3025.712 & 120.634 \\
SL-DMALA      & 24.997 & 37.621 & 78.494 & 104.367 & 200.743 & \textbf{3013.018} & \textbf{117.597} \\
\midrule
PAS & \textbf{32.648} & \textbf{44.558} & \textbf{86.655} & \textbf{112.305} & \textbf{377.038} & 3049.668 & 125.222 \\
SL-PAS & 36.134 & 49.425 & 90.428 & 115.630 & 388.979 & \textbf{3041.619} & \textbf{125.502} \\
\midrule
GWG           & \textbf{27.249} & \textbf{37.743} & \textbf{79.305} & \textbf{104.461} & \textbf{326.739} & \textbf{3030.762} & \textbf{117.964} \\
SL-GWG        & 28.056 & 40.556 & 81.412 & 106.017 & 330.687 & 3031.967 & 119.806 \\
\bottomrule
\end{tabular}
}
% \vskip -0.1in
\vspace{-0.5cm}
\end{table*}

\begin{table*}[ht!]
% \vskip -0.12in
\caption{Running time (in seconds) on MaxCut benchmarks for various methods and their SL variants. Bold values indicate faster performance between paired methods.}
\label{tab:maxcut_time}
\centering
\resizebox{.8\textwidth}{!}{
\begin{tabular}{l|ccc|ccc|c}
\toprule
\multicolumn{1}{c|}{} & \multicolumn{3}{c|}{{ER}} & \multicolumn{3}{c|}{{BA}} & {OPTSICOM} \\
\textbf{Method} & {256-300} & {512-600} & {1024-1100} & {256-300} & {512-600} & {1024-1100} & \textbf{} \\
\midrule
DMALA & \textbf{167.341} & \textbf{755.462} & 2814.459 & \textbf{75.166} & \textbf{141.391} & \textbf{271.073} & \textbf{4.772} \\
SL-DMALA & 169.672 & 759.201 & \textbf{2807.121} & 81.436 & 148.428 & 284.511 & 7.459 \\
\midrule
PAS & \textbf{196.667} & \textbf{815.633} & 2873.359 & \textbf{133.742} & \textbf{267.957} & \textbf{591.858} & \textbf{6.647} \\
SL-PAS & 201.665 & 820.853 & \textbf{2869.082} & 140.680 & 279.453 & 619.575 & 10.487 \\
\midrule
GWG & \textbf{178.039} & \textbf{780.655} & 2943.244 & {714.469} & 2749.896 & 7872.727 & \textbf{6.476} \\
SL-GWG & 181.171 & 784.497 & \textbf{2936.596} & \textbf{710.698} & \textbf{2695.718} & \textbf{6802.453} & 9.146 \\
\bottomrule
\end{tabular}
}
% \vskip -0.1in
\vspace{-0.5cm}
\end{table*}

\end{document}